\begin{document}
\input amssym.def
\newcommand{\singlespace}{
    \renewcommand{\baselinestretch}{1}
\large\normalsize}
\newcommand{\doublespace}{
   \renewcommand{\baselinestretch}{1.2}
   \large\normalsize}
\renewcommand{\theequation}{\thesection.\arabic{equation}}

\setcounter{equation}{0}
\def \ten#1{_{{}_{\scriptstyle#1}}}
\def \Z{\mathbb{Z}}
\def \C{\mathbb{C}}
\def \R{\mathbb{R}}
\def \Q{\mathbb{Q}}
\def \N{\mathbb{N}}
\def \F{\mathbb F} 
\def \L{\Bbb L}
\def \D{\mathbb{D}}
\def \l{\lambda}
\def \V{V^{\natural}}
\def \wt{{\rm wt}}
\def \tr{{\rm tr}}
\def \Res{{\rm Res}}
\def \End{{\rm End}}
\def \Aut{{\rm Aut}}
\def \mod{{\rm mod}}
\def \Hom{{\rm Hom}}
\def \im{{\rm im}}
\def \<{\langle}
\def \>{\rangle}
\def \w{\omega}
\def \c{{\tilde{c}}}
\def \o{\omega}
\def \t{\tau }
\def \ch{{\rm ch}}
\def \a{\alpha }
\def \b{\beta}
\def \e{\epsilon }
\def \la{\lambda }
\def \om{\omega }
\def \O{\Omega}
\def \qed{\mbox{ $\square$}}
\def \pf{\noindent {\bf Proof: \,}}
\def \voa{vertex operator algebra\ }
\def \voas{vertex operator algebras\ }
\def \p{\partial}
\def \1{{\bf 1}}
\def \ll{{\tilde{\lambda}}}
\def \H{{\bf H}}
\def \h{{\frak h}}
\def \g{{\frak g}}
\def \rank{{\rm rank}}
\def \({{\rm (}}
\def \){{\rm )}}
\def \Y {\mathcal{Y}}
\def \I {\mathcal{I}}
\def \A {\mathcal{A}}
\def \B {\mathcal {B}}
\def \Cc {\mathcal {C}}
\def \H {\mathcal{H}}
\def \M {\mathcal{M}}
\def \V {\mathcal{V}}
\def \O{{\bf O}}
\def \AA{{\bf A}}
\def \1{{\bf 1}}
\def\Ve{V^{0}}
\def\ha{\frac{1}{2}}
\def\se{\frac{1}{16}}
\def\Irr{\rm Irr}
\def\Ind{{\rm Ind}}
\def\o{\omega}
\newcommand{\BF}{\mathbb{F}}
\newcommand{\BQ}{\mathbb{Q}}
\newcommand{\BC}{\mathbb{C}}
\newcommand{\BL}{\mathbb{L}}
\newcommand{\BR}{\mathbb{R}}
\newcommand{\BZ}{\mathbb{Z}}

\newcommand{\BN}{\mathbb{N}}
\def \1{{\bf 1}}
\singlespace
\newtheorem{thm}{Theorem}[section]
\newtheorem{prop}[thm]{Proposition}
\newtheorem{lem}[thm]{Lemma}
\newtheorem{cor}[thm]{Corollary}
\newtheorem{rem}[thm]{Remark}
\newtheorem*{CPM}{Theorem}
\newtheorem{definition}[thm]{Definition}
\newtheorem{de}[thm]{Definition}

\begin{center}
{\Large {\bf Cliffold algebras,  modular Virasoro vertex operator algebras and $\Z[\ha]$-forms}} \\

\vspace{0.5cm} Chongying Dong\footnote{Supported by the Simons Foundation 634104 and NSFC grant 1187135}\\
Department of Mathematics, University of
California, Santa Cruz, CA 95064 
\vspace{0.5cm}

Ching Hung Lam\footnote{Partially supported by MoST grant 104-2115-M-001-004-MY3 of Taiwan}\\
Institute of Mathematics, Academia Sinica, Taipei 10617, Taiwan\\

\vspace{0.5cm}
Li Ren\footnote{Supported by NSFC grant 12071314}\\
 School of Mathematics, Sichuan University, 
Chengdu 610064, China\\
\end{center}
\hspace{1.5 cm}

\begin{abstract}
This paper consists of two parts: (1)  Using a $\Z[\ha]$-form of Virasoro vertex operator algebra $L(\ha,0)$ with central charge $\ha$,  we obtain a modular vertex operator algebra over any field $\F$ of finite characteristic different from 2.
We determine the generators and classify the irreducible modules for this vertex operator algebra. (2) We investigate 
modular framed vertex operator algebras. In particular, the rationality of  modular framed vertex operator algebras is established. For a modular code vertex operator algebra,  the  irreducible modules are constructed and classified.  Moreover, a $\Z[\frac{1}{2}]$-form for any framed vertex operator algebra
over $\C$ is constructed. As a result, one can obtain a modular  framed vertex operator algebra from any  framed vertex operator algebra 
over $\C.$ 
\end{abstract}

\section{Introduction}

This paper is a continuation of \cite{DR2}.  We  study modular Virasoro vertex operator algebra $L(\ha,0)_\F$
 over any field $\F$ with $\ch\F\ne 2$ based on a $\Z[\ha]$-form of Virasoro vertex operator algebra $L(\ha,0)$ over $\C,$ and develop a general theory of modular framed vertex operator algebra.

The Virasoro vertex operator algebra  $L(\frac{1}{2},0)$ over $\C$ and its representation theory including the fusion rules \cite{DMZ, W1} are the foundation of the framed vertex operator algebras \cite{M1, M2, DGH}. It is well known that  $L(\frac{1}{2},0)$ is rational and has exactly three irreducible modules  $L(\frac{1}{2},h)$ for $h=0, \frac{1}{2}, \frac{1}{16}.$ The fusion products has the following simple form
$$ L(\frac{1}{2},0)\times L(\frac{1}{2},h)=L(\frac{1}{2},h),\quad  L(\frac{1}{2}, \frac{1}{2}) \times L(\frac{1}{2}, \frac{1}{2})= L(\frac{1}{2}, 0 ), $$
$$ L(\frac{1}{2}, \frac{1}{2}) \times  L(\frac{1}{2}, \frac{1}{16})= L(\frac{1}{2}, \frac{1}{16}),
\quad   L(\frac{1}{2}, \frac{1}{16})
 \times  L(\frac{1}{2}, \frac{1}{16})=L(\frac{1}{2},0)+ L(\frac{1}{2}, \frac{1}{2}).$$

Modular vertex operator algebra $L(\ha,0)_\F$ associated to the irreducible highest weight module for the Virasoro algebra over any field of $\ch \F\ne2,7$ has been investigated in \cite{DR1,DR2}.  It was proved that   $L(\ha,0)_\F$ is a rational vertex operator algebra with three inequivalent irreducible modules $L(\frac{1}{2},h)_\F$ for $h=0, \frac{1}{2}, \frac{1}{16},$ and the character of $L(\ha,h)_\F$ is also the same as in the complex field case. Moreover $L(\ha,0)_\F$   has  the same fusion rules as  before. These results lead to a theory of modular framed vertex operator algebra as presented in the second part of this paper.  While the assumption $\ch \F\ne 2$ assures that the Virasoro algebra relation still makes sense, the assumption $\ch \F \ne 7$ avoids $\ha=\se.$ In fact, if $\ch \F=7,$ $L(\frac{1}{2},h)_\F$ has two inequivalent irreducible modules $L(\frac{1}{2},h)_\F$ with  $h=0, \frac{1}{2}$ \cite{DR2}. But the structure and representations of $L(\frac{1}{2},h)_\F$ are not well understood in this case. 
 
The reason for us to choose $L(\ha,0)$  only is  that $L(\ha,0)$ has a fermionic realization which gives a $\D$-form where $\D=\Z[\ha].$  Let  $H_\D=\D a$ be a rank one free $\D$ module with a non-degenerate bilinear form such that $(a,a)=1.$ There is a canonical vertex operator superalgebra $V(H_\D,\Z+\ha)$  over $\D$ and  its twisted module $V(H_\D,\Z).$ For
  $Z=\Z+\ha, \Z,$ $V(H_\D,Z)=V(H_\D,Z)_{\bar 0}\oplus V(H_\D,Z)_{\bar 1}$ where $V(H_R,Z)_{\bar 0}$ is the even part, $V(H_\D,Z)_{\bar 1}$ is the odd part of $V(H_\D,Z).$  It is well known that $V(H_\C,\Z+\ha)_{\bar 0}$ is isomorphic to $L(\ha,0),$ $V(H_\C,\Z+\ha)_{\bar 1}$ is isomorphic to $L(\ha,\ha),$ and both  $V(H_\C,\Z)_{\bar i}$ is isomorphic to
$L(\ha,\se)$ for $i=0,1$ \cite{KR}. Then $V(H_\D,Z)_{\bar i}$ is a $\D$-form of $V(H_\C,Z)_{\bar i}$ for $Z=\Z+\ha, \Z$ and $i=0,1.$ It is easy to show that $V(H_\F,Z)_{\bar i}=\F\otimes V(H_\D,Z)_{\bar i}.$ If $\ch\F\ne 7,$ $L(\ha,0)_\F=V(H_\F,Z+\ha)_{\bar 0},$ and $L(\ha,\ha)_\F=V(H_\F,\Z+\ha)_{\bar 1},$ $L(\ha,\se)_\F=V(H_\F,\Z)_{\bar i}$ for $i=0,1$ \cite{DR2}.
But if $\ch\F=7,$ $V(H_\F,\Z+\ha)_{\bar i}$ is a direct sum of infinitely many irreducible modules for the Virasoro algebra \cite{DR2}.  

The first part of  this paper is devoted to the study of  vertex operator algebra $V(H_\F,\Z+\ha)_{\bar 0}$ instead of $L(\ha,0)_\F$ as  $V(H_\F,\Z+\ha)_{\bar 0}$ has a better representation theory. We prove that  $V(H_\F,\Z+\ha)_{\bar 0}$ is a simple vertex operator algebra  generated by  two  vectors $\omega, J$ and classify the irreducible $V(H_\F,\Z+\ha)_{\bar 0}$-modules  which are isomorphic to $V(H_\F,Z)_{\bar i}$ for $Z=\Z+\ha, \Z,$ $i=0,1$ and $V(H_\F,\Z)_{\bar 0}\cong V(H_\F,\Z)_{\bar 1}$ for all $\F.$  If $\F\ne 7,$ $J$ can be generated from $\omega.$ The classification of irreducible modules uses the associative algebra $A(V(H_\F,\Z+\ha)_{\bar 0})$ as studied in \cite{Z, DR1}. We show that if $\ch\F=7$ then $A(V(H_\F,\Z+\ha)_{\bar 0})$ is isomorphic to $\F[x]/I$ where $I$ is an idea generated by
$x(x-1)(x-3)$ and thus conclude that $V(H_\F,\Z+\ha)_{\bar 0})$ has exactly three inequivalent irreducible modules  
(cf. Theorem \ref{t5.2}). 
We believe that $V(H_\F,\Z+\ha)_{\bar 0}$ is rational, but we could not establish this claim in the paper. 

The second part  of the paper  deals with modular framed vertex operator algebras. Based on the representation theory
of $L(\ha,0),$ the framed vertex operator algebras over $\C$ have been studied extensively in \cite{DMZ, M1, M2, DGH, LY2}. In this paper, we investigate 
framed vertex operator algebras  over any algebraically closed field $\F$ whose characteristic is different from $2$ and $7.$  Specifically, we  determine the structure of   a modular  framed vertex operator algebra in terms of binary codes and establish the rationality.  We also classify the irreducible modules for a modular code vertex operator algebra.  In addition, we obtain a modular framed vertex operator algebra  from any framed vertex operator algebra over $\C$ by constructing a $\Z[\frac{1}{2}]$-form. 

The study of framed vertex operator algebras over $\C$  was motivated by discovering that the moonshine vertex operator
algebra $V^{\natural}$ \cite{FLM} contains a vertex operator subalgebra $L(\frac{1}{2},0)^{\otimes 48}$ with the same central charge in \cite{DMZ}. A systematic investigation of framed vertex operator algebras  was given in \cite{M2, DGH}.  
A framed vertex operator algebra $V$ over $\C$ contains  a rational vertex operator subalgebra $L(\frac{1}{2},0)^{\otimes r}$ where $r/2$ is the central charge of $V.$ The main idea in the theory of framed vertex operator algebra is to decompose the vertex operator algebra $V$ into
a direct sum of finitely many irreducible  $L(\frac{1}{2},0)^{\otimes r}$-modules. One can then use the fusion rules for   $L(\frac{1}{2},0)$ to study this decomposition. It turns out that this decomposition is very powerful in understanding both structure and representation theory of $V.$ This leads to a better understanding of $V^{\natural}.$ In particular, $V^{\natural}$ is holomorphic \cite{D}, and  two weak versions of the Frenkel-Lepowsky-Meurman's conjecture on  uniqueness of the $V^{\natural}$ have been given in \cite{DGL, LY1}.  A new construction of $V^{\natural}$ has been obtained in \cite{M3}.   The theory of framed vertex operator algebra over $\C$ also plays important rules in the study of  holomorphic vertex operator algebras with $c=24.$ Many holomorphic vertex operator algebras with $c=24$ are framed vertex operator algebras \cite{LS}.

The structure of  a framed vertex operator algebra $V$ over $\F$ is similar to that over $\C.$ First, for every linear binary even code $C\subset\Z_2^r$, 
there is a code vertex operator algebra 
$$M_C=\bigoplus_{x=(x_1,..,x_r)\in C}L(\frac{1}{2},\frac{x_1}{2})_\F\otimes \cdots \otimes L(\frac{1}{2},\frac{x_r}{2})_\F$$
associated to $C.$ Code vertex operator algebra $M_C$ is a special class of framed vertex operator algebras which has no $ L(\frac{1}{2},\frac{1}{16})_\F$ involved.  The representation theory of $M_C$ over $\C$ is well understood  due to the work in \cite{M1, M2, LY2}.  We can associate two binary even codes $C$ and $D$ to $V$ such that the code vertex operator algebra $M_C$ is a subalgebra  of  $V$ and 
$$V=\oplus_{d\in D}V^d$$ where $V^0=M_C$ and each $V^d$ is a simple current as $M_C$-module. Using this decomposition, we can show that $V$ is rational.  It is worthy to mention that the decomposition of $V$ into 
irreducible $M_C$-modules is relatively easy over $\C$ as the minimal weights of $L(\frac{1}{2},h)_\C$ are obvious. But one needs extra effort to understand how to put $L(\frac{1}{2},\frac{x_1}{2})_\F\otimes \cdots \otimes L(\frac{1}{2},\frac{x_r}{2})_\F$ in $V$ whose $\Z$-gradation is not given by the weights anymore. 

A classification of irreducible modules for an arbitrary framed vertex operator algebra seems difficult at this stage. This has not been carried out completely over $\C.$ As in \cite{M1, M2},  we can classify irreducible modules for any code vertex operator algebra $M_C.$ Although the main idea is similar to that given in \cite{M1, M2}, we adopt a different approach. The main tool we use in this paper is the vertex operator superalgebra $V(H_\F)$ associated to the infinite dimensional Clifford algebra and its
twisted modules $V(H_\F,d)$ for any codeword $d\in \Z_2^r$ where $H_\F$ is $r$-dimensional vector space with a nondegenerate bilinear form.  Since $M_C$ is a vertex operator subalgebra of  $V(H_\F),$
we show that $V(H_\F,d)$ is a completely reducible $M_C$-module
if $d\in C^{\perp}$ and any irreducible $M_C$-module is obtained in this way.

Constructing a $\Z[\frac{1}{2}]$-form for a framed vertex operator algebra over $\C$ is the key to produce a modular framed vertex  operator algebra from a framed vertex operator algebra over $\C.$
It is easy to see that  for any integral domain $\D$ and  a free $\D$-module $H_\D$ of rank $r,$ $V(H_\D)$ is a vertex operator 
superalgebra and  $V(H_\D,d)$ is its twisted module.
If $\D=\Z[\frac{1}{2}]$,  we have natural $\Z[\frac{1}{2}]$-forms  $V(H_{\Z[\frac{1}{2}]})$ of $V(H_\C)$, $V(H_{\Z[\frac{1}{2}]},d)$ of $V(H_\C,d).$ 
Consequently we obtain a $\Z[\frac{1}{2}]$-form $(M_C)_{\Z[\frac{1}{2}]}$ of framed vertex operator algebra $M_C$ over $\C$ and a $\Z[\frac{1}{2}]$-form
for any irreducible $M_C$-module which is a simple current.  Although  we could not prove in this paper that any irreducible
$M_C$-module has a $\Z[\frac{1}{2}]$-form, the explicit construction of $\Z[\frac{1}{2}]$-form of any simple current for $M_C$ over $\C$ 
is good enough for us to obtain a $\Z[\frac{1}{2}]$-form for any framed vertex operator algebra over $\C.$ 

We should mention that we use a lot of ideas and techniques developed in \cite{M1, M2, DGH, LY2} for dealing framed vertex operator algebras over $\C$ in this paper although the treatments are more complicated.

There are other important work on modular vertex operator algebras. We refer the readers to \cite{B2, BR} for the modular moonshine, \cite{DR1, R} for the representation theory 
of modular vertex operator algebras, \cite{DG1, DG2, DG3, C, Mc} for integral forms of vertex operator algebras,
\cite{AW, JLM,LM1,LM2,LM3,Mu,W2} for modular affine vertex operator algebras, modular Virasoro vertex operator algebras, modular Heisenberg vertex operator algebras and modular lattice vertex operator algebras.

The paper is organized as follows. Section 2 is a review of  basics of  vertex operator superalgebras and their  twisted modules over  an integral domain. Section 3 gives a construction  of $L(c,h)_\F$ and related results.
In Section 4, we study modular vertex operator algebra $V(H_\F,\Z+\ha)_{\bar 0}$ and its modules, and prove that both $V(H_\F,\Z+\ha)$ and $V(H_\F,\Z)$ are modules for the restricted Virasoro algebra. 
 Section 5 is devoted to the study of generators of $V(H_\F,\Z+\ha)_{\bar 0}.$ In particular, we show that  $V(H_\F,\Z+\ha)_{\bar 0}$ is generated by two vectors. In Section 6, we give a spanning set of  $V(H_\F,\Z+\ha)_{\bar 0}.$  We determine the associative algebra $A( V(H_\F,\Z+\ha)_{\bar 0})$ and classify the irreducible  $V(H_\F,\Z+\ha)_{\bar 0}$-modules in Section 7. We investigate the structure of framed vertex operator algebras over any algebraically closed field $\F$ with $\ch\,\F\neq 2, 7$ in Section 8. As in the case $\F=\C$, we can associate two even binary codes $C$ and $D$ to a framed vertex operator algebra $V$ over $\F.$ These two codes play crucial roles in studying 
the structure and representation theory.  We discuss the vertex operator superalgebra $V(H_\D)$ and its twisted modules
$V(H_\D,d)$ over any integral domain $\D$ in Section 9. In the case $\D=\F$, we write down explicit decompositions of
$V(H_\F,d)$ into a direct sum of irreducible $L(\frac{1}{2},0)_\F\otimes \cdots\otimes L(\frac{1}{2},0)_\F$-modules
$L(\frac{1}{2},h_1)_\F\otimes\cdots\otimes  L(\frac{1}{2},h_r)_\F.$ The code vertex operator algebra $M_C$ is studied in Section 10. We decompose each $V(H_\F,d)$ into a direct sum of irreducible $M_C$-modules.  For $\D=\Z[\frac{1}{2}]$, we construct a $\D$-form for the code vertex operator algebra $M_C$ over $\C$ and a $\D$-form for any irreducible $M_C$-module which is a simple current  in Section 11. The main idea is to use the standard $\D$-form $V(H_\D,d)$ of $V(H_\C,d).$ In Section 12, we construct intertwining operators
among  $L(\frac{1}{2}, 0)_\D$-modules $L(\ha,h)_\D$ for $\D=\Z[\frac{1}{2}]$.  These results are used in Section 13 to construct a $\D$-form for any framed vertex operator algebra
over $\C.$ 

In this paper, we always assume that $\F$ is an algebraically closed field although some results still hold without this assumption.

\section{Basics}
\setcounter{equation}{0}

We review the basics of  vertex operator superalgebras and their twisted modules  over an integral domain $\D$ with $\ch \D\ne 2$ and $\frac{1}{2}\in \D$ from  \cite{B1, DR1, DR2} (also see  \cite{DL, DLM3, FFR, LL, X, DG1, Mc}).

A super $\D$-module is a $\Bbb Z_{2}$-graded free $\D$-module
 $V=V_{\bar{0}}\oplus V_{\bar{1}}$ such that both $V_{\bar{0}}$ and $V_{\bar{1}}$ are free $\D$-submodules.
 As usual  we let $\tilde{v}$ be $0$ if $v\in V_{\bar{0}}$, and $1$ if  $v\in V_{\bar{1}}$.

A  vertex operator superalgebra $V=(V,Y,1,\omega)$ over $\D$ is a
$\frac{1}{2}\Bbb Z$-graded super $\D$-module
$$V=\bigoplus_{n\in{ \frac{1}{2}\Bbb Z}}V_n= V_{\bar{0}}\oplus V_{\bar{1}}$$
with  $V_{\bar{0}}=\sum_{n\in\Z}V_n$ and
$V_{\bar{1}}=\sum_{n\in\frac{1}{2}+\Z}V_n$
such that the axioms of vertex operator superalgebra over $\C$ hold  and  for $n\in\Z,$ $s,t\in \ha\Z,$ $u\in V_s, v \in V_t,$
$u_nv\in V_{s+t-n-1}$, 
where $u_n\in \End V$ is given by  $Y(u,z)=\sum_{n\in \Z} u_n z^{-n-1}$.  
Here we assume that the central charge $c$ of the Virasoro algebra lies in $\D.$
If $v\in V_s$, we will call $s$ the degree of $v.$ We also have the notion of
vertex operator algebra $V$ over $\D$ if $V_{\bar 1}=0.$

An automorphism $g$ of a vertex operator superalgebra $V$ is a $\D$-module automorphism of $V$ such that $gY(u,z)v=Y(gu,z)gv$ for all $u,v\in V,$ $g\1=\1,$ $g\omega=\omega$ and $gV_n=V_n$ for all $n\in \ha \Z.$ It is clear that any automorphism preserves
$V_{\bar 0}$ and $V_{\bar 1}.$ There is a special automorphism  $ \sigma$  such that $ \sigma|V_{\bar 0}=1$
and $\sigma|V_{\bar 1}=-1.$ We see that $\sigma$ commutes with any automorphism.

Fix $g\in \Aut(V)$ of order $T<\infty.$  We assume that $\frac{1}{T}\in \D$ and $\D$ contains a primitive $T$-th root of unity $\eta.$ Then $V$ decomposes into eigenspaces of $g$:
$$V=\oplus_{r\in \Z/T\Z}V^{r}$$
where $V^r=\{v\in V|gv=\eta^rv\}.$
Then we have the notion of weak, admissible  $g$-twisted $V$-module $M=(M,Y_M)$ \cite{DZ, DR1} where $M=M_{\bar 0}\oplus M_{\bar 1}$ is $\Z_2$-graded.

If $\D$ is a field, a vertex operator algebra $V$ is called $g$-rational if any admissible $g$-twisted $V$-module is completely reducible. $V$ is rational if $V$ is $1$-rational.

\begin{thm}\label{t2.1} If $V$ is a rational vertex operator algebra over an algebraically closed field $\F$ with $\ch\F\ne 2$  then there are only finitely many inequivalent irreducible admissible modules and the homogeneous subspaces of any irreducible admissible module are finite dimensional.
\end{thm}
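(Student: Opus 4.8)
The plan is to transplant to the modular setting the classical proof of this statement (Zhu; Dong--Li--Mason) via Zhu's associative algebra; what makes this reasonable is that every identity used there is an identity of formal series with integer (or at worst half-integer) coefficients, and so persists over $\D$. Thus I would first record, over $\D$ with $\frac12\in\D$, the following facts, established in this generality in \cite{DR1}, \cite{DR2}, \cite{DZ}: the associative $\D$-algebra $A(V)=V/O(V)$; the rule assigning to an admissible module $M=\bigoplus_{n\ge 0}M(n)$ with $M(0)\ne 0$ its top level $M(0)$, which becomes an $A(V)$-module via the zero-mode action $o(a)$; the induction functor $U\mapsto\overline M(U)$ from $A(V)$-modules to admissible $V$-modules, with $\overline M(U)$ generated as a $V$-module by its degree-zero component $\overline M(U)(0)\cong U$; and the bijection $M\leftrightarrow M(0)$ between isomorphism classes of irreducible admissible $V$-modules and of irreducible $A(V)$-modules. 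I would also isolate one elementary lemma: if $W$ is an admissible module generated by $W(0)$ and $W=\bigoplus_{i}W^{i}$ with each $W^{i}$ an irreducible admissible submodule, then each $W^{i}$ meets $W(0)$ nontrivially; for the submodule generated by $W(0)=\bigoplus_{i}\bigl(W^{i}\cap W(0)\bigr)$ lies inside $\bigoplus_{\{i:\,W^{i}\cap W(0)\ne 0\}}W^{i}$, which must therefore exhaust $W$.

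The core step is to show that rationality forces $A(V)$ to be semisimple of finite length as a module over itself. Apply the induction functor to the left regular module ${}_{A(V)}A(V)$: the admissible module $W:=\overline M\bigl({}_{A(V)}A(V)\bigr)$ is completely reducible by rationality, say $W=\bigoplus_{i\in I}W^{i}$ with each $W^{i}$ irreducible. By the lemma each $W^{i}$ meets $W(0)$, so $A(V)=W(0)=\bigoplus_{i\in I}W^{i}(0)$ is a decomposition of the regular module into $A(V)$-submodules $W^{i}(0)$, each of which is a simple $A(V)$-module. Since ${}_{A(V)}A(V)$ is cyclic (generated by $\1+O(V)$), only finitely many of the $W^{i}(0)$ are nonzero; hence $A(V)$ is semisimple of finite composition length, so it has only finitely many inequivalent simple modules, and by the bijection $V$ has only finitely many inequivalent irreducible admissible modules. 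Moreover, over the algebraically closed field $\F$ each such simple $A(V)$-module is finite-dimensional (Wedderburn--Artin, the division algebras appearing being $\F$), so the top level of each irreducible admissible $V$-module is finite-dimensional.

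Finally, to get finite-dimensionality of every homogeneous space $M(n)$ of an irreducible admissible module $M$, I would repeat the argument one step higher with the algebras $A_{n}(V)$: the space $\bigoplus_{j=0}^{n}M(j)$ is an $A_{n}(V)$-module, the verbatim analogue of the regular-module argument shows $A_{n}(V)$ is finite-dimensional semisimple when $V$ is rational, and since $M$ is irreducible it is generated by $M(0)$, so $\bigoplus_{j=0}^{n}M(j)$ is a finitely generated $A_{n}(V)$-module; being finitely generated over a finite-dimensional algebra it is finite-dimensional, whence $\dim_{\F}M(n)<\infty$. The hard part is not in any of these ring-theoretic manipulations --- once the Zhu-algebra formalism is available they reproduce the characteristic-zero arguments word for word --- but precisely in that formalism: one must check that the constructions of $A(V)$, of $A_{n}(V)$, of the induced modules $\overline M(U)$ and $\overline M_{n}(U)$, and of the top-level correspondence, together with all their defining properties, go through with $\C$ replaced by $\D$ under the standing hypotheses $\ch\D\ne 2$, $\frac12\in\D$ (and, for the twisted and super refinements, $\frac1T\in\D$ together with a primitive $T$-th root of unity in $\D$). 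These verifications are exactly what \cite{DR1}, \cite{DR2}, \cite{DZ} provide, and granting them the theorem follows.
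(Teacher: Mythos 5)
Your overall route---transplanting the Zhu-algebra/$A_n(V)$ proof to the modular setting and deferring the formalism to \cite{DZ}, \cite{DR1}, \cite{R}---is exactly the paper's route (its ``proof'' is the one-line remark that the argument is similar to those references), and your semisimplicity step is sound: inducing from the regular $A(V)$-module, using your lemma that every irreducible summand meets $W(0)$, and cyclicity of the regular module does give that $A(V)$ is semisimple Artinian with finitely many simples, hence finitely many irreducible admissible modules via the top-level bijection. The genuine gaps are in the two finiteness steps, which are precisely the nontrivial content of the theorem. ``Wedderburn--Artin with the division algebras being $\F$'' does not yield finite-dimensionality of the simple $A(V)$-modules: a semisimple Artinian algebra over an algebraically closed field can have infinite-dimensional simple modules (the field $\F(t)$, viewed as an $\F$-algebra, is simple Artinian, its unique simple module is $\F(t)$ itself, and the ``division algebra'' is $\F(t)\ne\F$). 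What must actually be shown is that $\End_{A(V)}(M(0))\cong\End_V(M)$ is $\F$ (equivalently that $A(V)$ is finite-dimensional over $\F$); over $\C$ this is a Dixmier--Schur cardinality argument (irreducible admissible modules have countable dimension, $|\C|$ is uncountable), and over a countable algebraically closed field such as $\overline{\F}_p$ even that device is unavailable, so this point has to be taken from \cite{DR1}, \cite{R} as a theorem, not from abstract ring theory.

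Your final step also misuses the $A_n(V)$-structure: $A_n(V)$ acts on $\bigoplus_{j=0}^{n}M(j)$ through the degree-preserving zero modes $o(u)$, so this space is a direct sum of the $A_n(V)$-modules $M(j)$ and is not generated over $A_n(V)$ by $M(0)$ unless $M(j)=0$ for $0<j\le n$; generation of $M$ by $M(0)$ as a $V$-module does not translate into finite generation over $A_n(V)$. The correct statements from $A_n$-theory are that each nonzero $M(n)$ is a simple $A_n(V)$-module, or that the map $u\mapsto u_{\wt u-1-n}\colon V\to\Hom(M(0),M(n))$ factors through the bimodule $A_{n,0}(V)$, giving $\dim M(n)\le\dim_\F A_n(V)\cdot\dim M(0)$---and either way one again needs $\dim_\F A_n(V)<\infty$, i.e.\ the same finiteness issue as above, which is exactly what the modular $A_n(V)$ theory of \cite{R} (and \cite{DR1}, \cite{DZ}) is being cited to supply. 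So the skeleton matches the paper's intended proof, but the two places where you claim the finiteness comes for free are the places where it does not.
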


Theorem \ref{t2.1} in the case $\F=\C$ was obtained in \cite{DZ}, and in the case  that $\ch\F$ is positive was given in \cite{DR1, R}.  The proof of Theorem \ref{t2.1} is similar to those given in \cite{DZ, DR1,R}. 

 The following result was obtained in \cite{FHL} in the case $\F=\C.$ The same proof works here.
 \begin{lem}\label{tensorrational}  Let $V^i=(V^i, Y^i,\1_i,\omega_i)$ be vertex operator superalgebras over $\F$, $g_i$ be an automorphism of $V^i$ of finite order, and $M^i=(M^i,Y_i)$ be an admissible $g_i$-twisted $V^i$-module for $i=1,...,n.$ 
 
 (1)  $V=V^1\otimes \cdots \otimes V^n$ is a vertex operator superalgebra, $g=g_1\otimes \cdots \otimes g_n$ is an automorphism of $V$ of finite order, and 
 $M=M^1\otimes \cdots \otimes M^n$ is an admissible $g$-twisted $V$-module in an obvious way. 
 
 (2) $M$ is irreducible if and only if each $M^i$ is irreducible. Moreover, every irreducible $V$-module is obtained in this way.
 
 (3) If $V^i$ is $g_i$-rational for all $i$ then $V$ is $g$-rational.
 \end{lem}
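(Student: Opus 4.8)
The statement is Lemma~\ref{tensorrational}, the standard fact that tensor products of vertex operator superalgebras behave well with respect to twisted modules and rationality. Since the excerpt tells us "the same proof works here" (referencing \cite{FHL} for the $\C$ case), the goal of a proof proposal is to indicate how the classical argument survives the passage to an integral domain $\D$, keeping track of the $\Z_2$-gradings and the automorphisms $g_i$.

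For part (1), I would first recall the construction: on $V = V^1\otimes\cdots\otimes V^n$ one sets $Y(u^1\otimes\cdots\otimes u^n, z) = Y^1(u^1,z)\otimes\cdots\otimes Y^n(u^n,z)$, with the Koszul sign rule governing how the operators act across tensor factors (this is where $\tilde v$ enters), vacuum $\1 = \1_1\otimes\cdots\otimes\1_n$, and Virasoro element $\omega = \sum_i \1_1\otimes\cdots\otimes\omega_i\otimes\cdots\otimes\1_n$, so that the central charge adds. The verification of the Jacobi identity (or equivalently weak commutativity and associativity) for the tensor product is a formal manipulation that is insensitive to the ground ring, provided $\tfrac12\in\D$ so the super sign conventions make sense; the $\tfrac12\Z$-grading of $V$ is the sum of the gradings, and one checks $V_{\bar 0}=\sum_{n\in\Z}V_n$, $V_{\bar 1}=\sum_{n\in\frac12+\Z}V_n$. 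That $g=g_1\otimes\cdots\otimes g_n$ is an automorphism of finite order (order the lcm of the orders of the $g_i$) is immediate, and one notes $\D$ then contains the requisite root of unity. For the twisted module $M=M^1\otimes\cdots\otimes M^n$, one sets $Y_M(u^1\otimes\cdots\otimes u^n,z)=Y_1(u^1,z)\otimes\cdots\otimes Y_n(u_n,z)$ and checks the twisted Jacobi identity factor by factor; the eigenspace decomposition $V=\oplus V^r$ tensors appropriately so that the fractional powers of $z$ in each $Y_i$ assemble correctly. Again this is purely formal over $\D$.

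For part (2), the irreducibility statement, the classical argument uses a density/double-commutant type result: if $M^1,\dots,M^n$ are irreducible then any nonzero twisted submodule of $M$ must be all of $M$, proved by showing the image of $V$ (together with the mode algebra) acts with enough elements to separate pure tensors; over a field $\F$ this is where I expect the main work, since one wants a Jacobson-density-style argument, and the excerpt's $\F$ is algebraically closed which is exactly the hypothesis that makes the endomorphism rings of the irreducible $M^i$ reduce to scalars (Schur's lemma over an algebraically closed field). The converse—a submodule of one factor yields a submodule of the tensor product—is trivial. The ``moreover'' clause (every irreducible $V$-module arises this way) is the deepest point: given an irreducible $g$-twisted $V$-module $M$, one restricts to each $V^i\hookrightarrow V$, uses that $M$ is locally finite / admissible and that the $V^i$-action commutes with the $V^j$-action for $j\neq i$, picks an irreducible $V^i$-submodule for the first factor, and peels off factors inductively; the algebraically-closed hypothesis is again used so that the relevant Hom-spaces are one-dimensional. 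I would follow \cite{FHL} essentially verbatim here, noting only that all the finite-dimensionality inputs are available because each $V^i$ is $g_i$-rational (Theorem~\ref{t2.1}).

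For part (3), given that each $V^i$ is $g_i$-rational, I would argue that any admissible $g$-twisted $V$-module $N$ is completely reducible: restrict $N$ to $V^1$, decompose into irreducibles by $g_1$-rationality, group the isotypic components, use the commuting action of $V^2\otimes\cdots\otimes V^n$ on each isotypic component (which thereby becomes, by the algebraically-closed Schur argument, a tensor of an irreducible $V^1$-module with an admissible $g_2\otimes\cdots\otimes g_n$-twisted module over $V^2\otimes\cdots\otimes V^n$), and induct on $n$. The base case is a single factor, which is the hypothesis. The only genuine obstacle in the whole lemma is making the multiplicity-space / double-commutant step rigorous over a general algebraically closed field rather than $\C$; but since $L(\tfrac12,0)_\F$ and its modules have finite-dimensional graded pieces and the Schur lemma holds over $\F$, the $\C$-proof of \cite{FHL} transfers without essential change, and I would simply record this transfer rather than reprove it in detail.
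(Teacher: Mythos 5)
Your proposal is correct and follows essentially the same route as the paper, which simply cites \cite{FHL} for the case $\F=\C$ and observes that the same proof carries over; your sketch is just a more detailed account of that transfer, correctly identifying Schur's lemma over an algebraically closed field and the finite-dimensionality of graded pieces as the points to check. Nothing further is needed.
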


We need  the notion of contragredient modules from \cite{FHL, X, Y, DR1}. 
Let $V$ be a vertex operator algebra over a field $\F$. 
Let $g$ be an automorphism of $V$ of order $T<\infty$ and let $M=\oplus_{n\geq 0}M(n)$ be an admissible $g$-twisted $V$-module.
We define the graded dual $M'$ of $M$ as
$$M'=\oplus_{n\geq 0}M(n)^*$$
where $M(n)^*=\Hom_\F(M(n),\F).$
We denote the natural pair from $M'\times M\to \F$ by $(,).$
Also assume that the operators $\frac{L(1)^n}{n!}$ make sense on $V$ for $n\geq 0.$
Then $M'=(M',Y)$ is an admissible $g^{-1}$-twisted $V$-module such that
$$(Y(v,z)w',w)=(w',Y(e^{zL(1)}(-1)^{\deg v+2(\deg v)^2+r}z^{-2\deg v}v,z^{-1})w)$$
for any $v\in V_{\bar r}$, $w'\in M'$ and $ w\in M.$  The reason for us to use $(-1)^{\deg v+2(\deg v)^2+r}$ instead of 
$(e^{\pi i})^{\deg v}$ is that we do not assume the square root of $-1$ is contained in $\F.$ 
Moreover, if each homogeneous subspace of $M(n)$ is finite dimensional, then $M'$ is irreducible if and only if $M$ is irreducible.

Next we define intertwining operators and fusion rules among admissible $g_k$-twisted
modules $(M_k,Y_k)$ for $k=1,2,3$ where $g_k$ are commuting
automorphisms of order $T_k.$   In the case that $\F=\C,$ the definitions are the same as in \cite{X}. We now assume that  $\ch \F=p$ is a prime.
Let $T$ be the least common multiple of $T_1,T_2,T_3$ and assume $T\ne 0$ in $ \F$ and $\F$ contains a $T$-th  primitive root of the unity $\eta.$
In this case
$V$ decomposes into the direct sum of common
eigenspaces
$$V=\oplus_{j_1,j_2=0}^{T-1}V^{(j_1,j_2)}$$
where
$$V^{(j_1,j_2)}=\{v\in V\mid g_kv=\eta^{j_k}v, k=1,2\}.$$
 As usual, we denote the localization of $\Z$ at  the prime ideal $(p)=p\Z$ by $\Z_{(p)}.$ That is, $\Z_{(p)} =\{\frac{m}{n}\in \BQ\mid p\nmid n\}.$
 Assume that
$L(0)|_{M_i(n)}=\lambda_i+n$ for all $n$ where $\lambda_i\in \Z_{(p)}$ which is understood to be a number in $\BF.$

An \emph{intertwining operator} of type $\left(\begin{array}{c}
M_3\\
M_{1\ }M_{2}
\end{array}\right)$ is a linear map
\[
I(\cdot,\ z):\ M_{1}\to\text{\ensuremath{\mbox{Hom}(M_{2},\ M_{3})\{z\}}}
\]

\[
\ \ \ \ \ \   u\mapsto I(u,\ z)=\sum_{n\in\frac{1}{T}\mathbb{Z}}u(n)z^{-n-1-\lambda_1-\lambda_2+\lambda_3}
\]
 satisfying:

(1) for any $u\in M_{1}$ and $v\in M_{2}$, $u(n)v=0$ for $n$
sufficiently large;

(2) $I(L(-1)v,\ z)=(\frac{d}{dz})I(v,\ z)$;

(3) $u(m)M_2(n)\subset M_3(n-m-1+\deg u)$ for $m,n\in \frac{1}{T}\BZ;$

(4) for any $u\in V_{\bar s}^{(j_1,j_2)},\ v\in ( M_1)_{\bar t},$
\[
z_{0}^{-1}\left(\frac{z_1-z_2}{z_0}\right)^{j_1/T}\delta\left(\frac{z_{1}-z_{2}}{z_{0}}\right)Y_{3}(u,\ z_{1})I(v,\ z_{2})
\]
\[-(-1)^{st}z_{0}^{-1}\left(\!\frac{-z_2\!+\!z_1}{z_0}\!\right)^{j_1/T}\delta\left(\frac{-z_{2}+z_{1}}{z_{0}}\right)I(v,\ z_{2})Y_{2}(u,\ z_{1})
\]
\[
=z_{2}^{-1}\left(\frac{z_1-z_0}{z_2}\right)^{-j_2/T}\delta\left(\frac{z_{1}-z_{0}}{z_{2}}\right)I(Y_{1}(u,\ z_{0})v,\ z_{2}).
\]
We denote the space of intertwining operators of type $\left(\begin{array}{c}
M_{3}\\
M_{1\ }M_{2}
\end{array}\right)$
by $I\left(\begin{array}{c}
M_{3}\\
M_{1\ }M_{2}
\end{array}\right).$ We call  $N_{M_1\,M_2}^{M_3}=\dim I\left(\begin{array}{c}
M_{3}\\
M_{1\ }M_{2}
\end{array}\right)$ the fusion rules.
It is proved in \cite{DR1} that the fusion rules $N_{M_1\,M_2}^{M_3}$ are independent of the choices of $\lambda_i.$
Note that  if $N^{M_3}_{M_1M_2}>0$ then $g_3=g_1g_2$ (see [X]). So we now assume that $g_3=g_1g_2.$
This definition of intertwining operator here is the same as that given in \cite{DR2} when $g_i=1$ for all $i.$

We also remark that if $\F=\C$ we do not need $\Z_{(p)}$ in the definition, see \cite{X, DLM0}. 

The following result is a generalization of Proposition 11.19 of \cite{DL} with the same proof by noting that the commutativity and associativity still hold in the current situation.
\begin{lem}\label{dl} 
Let $M_i$ and $I$ be as before. We also assume that $M_1, M_2$ are irreducible and $I(u,z)v=0$ for some nonzero $u\in M_1, v\in M_2.$ Then
$I=0.$
\end{lem}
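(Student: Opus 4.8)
The plan is to mimic the argument of Proposition 11.19 in \cite{DL}, exploiting the fact that an intertwining operator among twisted modules satisfies commutativity and associativity (in the generalized, delta-function sense), which is the substitute for rationality of products that makes the original proof run. First I would observe that the hypothesis $I(u,z)v=0$ for a fixed nonzero pair $u\in M_1,\ v\in M_2$ should be ``spread around'' using the module structures on both sides. Concretely, for any $a\in V$ one applies the associativity formula relating $Y_3(a,z_1)I(u,z_2)v$ and $I(Y_1(a,z_0)u,z_2)v$ (axiom (4) in the definition of intertwining operator) to deduce that $I(a_n u,z)v=0$ for all $n\in\frac1T\BZ$; similarly, using the commutativity-type identity one pushes $Y_2(a,z_1)$ past $I(u,z_2)$ to get $I(u,z)(a_n v)=0$ for all $n$. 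Here one uses that $V$ acts on $M_1,M_2,M_3$ and that the delta-function identities are genuine identities of formal series, so taking residues in the appropriate variables extracts the component statements.

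Next I would run an induction on (the submodules generated by) $u$ and $v$. Since $M_1$ is irreducible, the vectors of the form $a^{(1)}_{n_1}\cdots a^{(k)}_{n_k}u$ span $M_1$; by the previous step $I(\cdot,z)$ kills every such vector when evaluated on $v$, hence $I(\tilde u,z)v=0$ for \emph{all} $\tilde u\in M_1$. Then, fixing this and using irreducibility of $M_2$, the same argument with the roles of the two modules exchanged (feeding in the commutativity identity to move $V$-operators on the $M_2$ side through $I$) gives $I(\tilde u,z)\tilde v=0$ for all $\tilde u\in M_1$ and all $\tilde v\in M_2$, i.e. $I=0$. One should be slightly careful that the spanning argument for an irreducible admissible twisted module is legitimate: the submodule generated by a nonzero vector is all of $M_i$ precisely by irreducibility, and ``generated'' here means closure under all operators $a_n$ with $a\in V$, $n\in\frac1T\BZ$, which is exactly what the previous paragraph controls.

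The main obstacle, as in \cite{DL}, is making the formal-variable manipulations rigorous in the twisted setting: the associativity and commutativity relations for $I$ carry the fractional powers $(\frac{z_1-z_2}{z_0})^{j_1/T}$ and $(\frac{z_1-z_0}{z_2})^{-j_2/T}$, so one must check that multiplying by a suitable power of $(z_1-z_2)$ (resp. $(z_1-z_0)$) clears the branch cuts, that the resulting identity is an identity of Laurent series to which $\Res$ can be applied, and that doing so really yields $I(a_n u,z)v=0$ rather than some twisted-up combination. Since the excerpt explicitly states that ``commutativity and associativity still hold in the current situation,'' these are available as black boxes, and the remaining bookkeeping is routine; I would relegate it to a remark that the computation is identical to \cite{DL} once one substitutes the twisted Jacobi-type identities for the untwisted ones. \qed
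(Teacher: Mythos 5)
Your overall strategy is the right one, and it is exactly what the paper intends: the lemma is proved by running the argument of Proposition 11.19 of \cite{DL}, with the twisted commutativity and associativity taken as given. However, as written your plan has a concrete mis-step in the order of the two ``spreading'' arguments. You propose to deduce first, from associativity alone, that $I(a_nu,z)v=0$ for all $a\in V$ and $n\in\frac{1}{T}\Z$, and then to fill out $M_1$ by irreducibility before touching $M_2$. This first deduction does not follow from $I(u,z)v=0$: when you take $\Res_{z_0}$ of the twisted Jacobi identity to isolate $I(Y_1(a,z_0)u,z_2)v$, the resulting iterate formula expresses $I(a_nu,z_2)v$ as a combination of $Y_3(a,z_1)I(u,z_2)v$ (which vanishes) \emph{and} $I(u,z_2)Y_2(a,z_1)v$ (which at this stage is not known to vanish, since $Y_2(a,z_1)v$ involves vectors other than $v$). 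So the induction that fills the $M_1$-slot first, keeping the single vector $v$ fixed, breaks down at its very first step.

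The fix is simply to reverse the order, which is what the proof in \cite{DL} does. First use weak commutativity: $(z_1-z_2)^{N}Y_3(a,z_1)I(u,z_2)w=\pm(z_1-z_2)^{N}I(u,z_2)Y_2(a,z_1)w$ for $N$ sufficiently large (with the fractional power $(z_1-z_2)^{j_1/T}$ absorbed), applied to $w=v$; the left-hand side vanishes, and since $I(u,z_2)Y_2(a,z_1)v$ is lower truncated in $z_1$, multiplication by $(z_1-z_2)^{N}$ is injective there, so $I(u,z_2)a_nv=0$. This step needs only $I(u,z_2)w=0$ for the vector $w$ at hand, so it iterates: by irreducibility of $M_2$ you get $I(u,z)M_2=0$. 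Only now does the Jacobi identity give $I(a_nu,z)w=0$ for all $w\in M_2$, because \emph{both} product terms die; iterating and using irreducibility of $M_1$ yields $I=0$. With this reordering your argument coincides with the paper's (i.e.\ with the proof of Proposition 11.19 of \cite{DL} transported to the twisted setting), and the remaining fractional-power bookkeeping is, as you say, routine.
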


The fusion rules have certain symmetry properties.
\begin{lem} Let $M_k$ be as before. 

(1) We have $N^{M_3}_{M_2M_1}=N^{M_3}_{M_1M_2}.$

(2) If each homogeneous subspaces of $M_2$ and $M_3$ are finite dimensional and $\frac{L(1)^n}{n!}$ are well defined on $V$ for $n\geq 0,$
then $N^{M_3}_{M_2M_1}=N^{M'_2}_{M_1M'_3}.$
\end{lem}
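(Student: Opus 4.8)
The plan is to establish the two symmetry properties of fusion rules by constructing explicit linear isomorphisms between the relevant spaces of intertwining operators, following the standard arguments of \cite{FHL} and \cite{DL} but keeping track of the signs and the $\eta$-power factors arising in the twisted-super setting.

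\textbf{Part (1): $N^{M_3}_{M_2M_1}=N^{M_3}_{M_1M_2}$.} Here $M_1$ is a $g_1$-twisted module and $M_2$ is a $g_2$-twisted module, and since a nonzero intertwining operator of type $\binom{M_3}{M_1\,M_2}$ forces $g_3=g_1g_2$ while one of type $\binom{M_3}{M_2\,M_1}$ forces $g_3=g_2g_1$, the hypothesis that $g_1,g_2$ commute makes both conditions compatible. Given $I\in I\binom{M_3}{M_1\,M_2}$, I would define $\widetilde{I}\in I\binom{M_3}{M_2\,M_1}$ by a ``skew-symmetry'' formula of the shape
\[
\widetilde{I}(v,z)u = (-1)^{\tilde u\tilde v}\,e^{zL(-1)}\,I(u,-z)v ,
\]
(with an appropriate fractional power of $-1$ or $\eta$ absorbed when $g_i\neq 1$, to make the $z$-expansions land in the correct $\frac1T\Z$-graded spaces), and then verify that $\widetilde I$ satisfies axioms (1)--(4). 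Axioms (1)--(3) are immediate bookkeeping on degrees; the Jacobi-type identity (4) is checked by substituting the defining formula and using the commutativity and associativity properties that, as the excerpt notes, continue to hold in the current situation. The map $I\mapsto\widetilde I$ is linear and has an obvious inverse of the same form, giving the equality of dimensions.

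\textbf{Part (2): $N^{M_3}_{M_2M_1}=N^{M'_2}_{M_1M'_3}$.} This is the adjoint/contragredient symmetry of \cite{FHL}. Using the contragredient module construction recalled in the excerpt (with the sign $(-1)^{\deg v+2(\deg v)^2+r}$ replacing $(e^{\pi\sqrt{-1}})^{\deg v}$, exactly so as to avoid needing $\sqrt{-1}\in\F$), I would, given $I\in I\binom{M_3}{M_2\,M_1}$, define a map $\phi(I):M_1\to\mathrm{Hom}(M'_3,(M'_2)\{z\})$ by
\[
\langle \phi(I)(v,z)w'_3,\ w_2\rangle = \langle w'_3,\ I(e^{zL(1)}(-1)^{\deg v+2(\deg v)^2+\tilde v}z^{-2\deg v}v,\ z^{-1})w_2\rangle ,
\]
on homogeneous $v$, extended linearly, and check that $\phi(I)$ is an intertwining operator of type $\binom{M'_2}{M_1\,M'_3}$. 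The finite-dimensionality of the homogeneous subspaces of $M_2$ and $M_3$ guarantees that $M'_2$ is a genuine admissible module with $(M'_2)'\cong M_2$ (and similarly for $M_3$), and the hypothesis that $\frac{L(1)^n}{n!}$ is well defined on $V$ is exactly what is needed for the contragredient action and the operator $e^{zL(1)}$ to make sense. One then verifies axioms (1)--(4) for $\phi(I)$ by transposing the axioms for $I$ through the pairings, again invoking commutativity/associativity; linearity of $\phi$ and the existence of an inverse (apply the analogous construction to $M'_2,M'_3$ and use the canonical identifications $(M'_2)'\cong M_2$, $(M'_3)'\cong M_3$) complete the argument.

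\textbf{Main obstacle.} The routine parts are the degree conditions (1)--(3); the real work is verifying the twisted Jacobi identity (4) for the constructed operators. The bookkeeping of the fractional powers $(z_1-z_2)^{j_1/T}$, $\big(\tfrac{z_1-z_0}{z_2}\big)^{-j_2/T}$ together with the super sign $(-1)^{st}$ must be tracked carefully: under the substitutions $z\mapsto -z$ (in part 1) and $z\mapsto z^{-1}$ with $v\mapsto e^{zL(1)}(\cdots)v$ (in part 2) the eigenvalue labels $(j_1,j_2)$ get permuted/inverted, and one must confirm that the branch choices in the $\delta$-function prefactors transform consistently. Since the excerpt explicitly asserts that the FHL and DL proofs ``work here'' with the same arguments, the expectation is that no new idea is required beyond disciplined sign-and-exponent tracking, using the modified contragredient sign $(-1)^{\deg v+2(\deg v)^2+r}$ in place of $(e^{\pi\sqrt{-1}})^{\deg v}$ wherever the classical proof invokes $\sqrt{-1}$.
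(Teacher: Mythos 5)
The paper offers no written proof of this lemma---it is stated as the standard FHL/DL symmetry of fusion rules carrying over to the twisted, modular setting---and your sketch is exactly that intended argument: the skew-symmetry map $I\mapsto\widetilde I$ built from $e^{zL(-1)}$ and a sign/branch adjustment for (1), and the adjoint (contragredient) map built from $e^{zL(1)}$ with the integer-valued exponent $(-1)^{\deg v+2(\deg v)^2+\tilde v}$ replacing $(e^{\pi\sqrt{-1}})^{\deg v}$ for (2). One harmless slip to fix: in your displayed formula for (2) the operator $I$ was taken of type with first input from $M_2$ and second from $M_1$, yet you feed it $e^{zL(1)}(\cdots)v$ with $v\in M_1$ in the first slot and $w_2\in M_2$ in the second; either swap the arguments (with the appropriate super sign) or first apply part (1), after which the formula is precisely the standard construction.
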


Let $V^i$  be rational vertex operator algebras over $\F$ and let $M^i_{j}$ be irreducible admissible  $V^i$-modules for $i=1,...,n$ and $j=1,2,3.$  We also assume that $N^{M^i_3}_{M^i_1 M^i_2}$ is finite for all $i.$  Using the exact proof given in \cite{DMZ} in the case $\F=\C$,  we obtain the following result.
\begin{lem}\label{tensorfusion}  The fusion rule  $N^{M^1_3\otimes \cdots \otimes M^n_3}_{M^1_1\otimes \cdots \otimes M^n_1\  M^1_2\otimes \cdots \otimes M^n_2}$ is equal
to $\prod_{i=1}^nN^{M^i_3}_{M^i_1 M^i_2}.$ 
 \end{lem}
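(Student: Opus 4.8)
The plan is to reduce the tensor product fusion rule to the ordinary (untwisted, characteristic zero) situation treated in \cite{DMZ}, keeping track of the two new features here: the base field $\F$ may have positive characteristic, and the modules are twisted. First I would fix irreducible admissible $V^i$-modules $M^i_j$ with conformal weights $\lambda^i_j\in\BQ(p)$ and set $M_j=M^1_j\otimes\cdots\otimes M^n_j$, an irreducible admissible module over $V=V^1\otimes\cdots\otimes V^n$ by Lemma \ref{tensorrational}. One direction is the easy one: given intertwining operators $I^i\in I\binom{M^i_3}{M^i_1\,M^i_2}$ for each $i$, the ``tensor product'' $I=I^1\otimes\cdots\otimes I^n$, defined on homogeneous vectors by $I(u_1\otimes\cdots\otimes u_n,z)(v_1\otimes\cdots\otimes v_n)=\pm\,I^1(u_1,z)v_1\otimes\cdots\otimes I^n(u_n,z)v_n$ with the appropriate Koszul sign coming from the $\Z_2$-grading, is an intertwining operator of type $\binom{M^1_3\otimes\cdots\otimes M^n_3}{M^1_1\otimes\cdots\otimes M^n_1\;M^1_2\otimes\cdots\otimes M^n_2}$; checking the $L(-1)$-derivative property, the grading-shift property (3), and the twisted Jacobi identity (4) for $I$ is a routine calculation using that the component operators on different tensor factors (super)commute and that the grading shift $-\lambda_1-\lambda_2+\lambda_3$ of $I$ is the sum of the shifts of the $I^i$. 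Moreover $I^i\mapsto I$ is linear and injective in each argument, so this produces an injection $\bigotimes_i I\binom{M^i_3}{M^i_1\,M^i_2}\hookrightarrow I\binom{M_3}{M_1\,M_2}$, giving $\prod_i N^{M^i_3}_{M^i_1 M^i_2}\le N^{M_3}_{M_1 M_2}$.

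The substantive direction is the reverse inequality: every intertwining operator for the tensor product factors as such a tensor product. Here I would follow \cite{DMZ} closely. Given $I\in I\binom{M_3}{M_1\,M_2}$, one uses the rationality of each $V^i$ (hence of $V$, by Lemma \ref{tensorrational}(3) applied with $g_i$), together with complete reducibility, to decompose the relevant modules and restrict $I$ along the embeddings $V^i\hookrightarrow V$, $v^i\mapsto \1_1\otimes\cdots\otimes v^i\otimes\cdots\otimes\1_n$. The point is that $I$ restricted to $M^i_1\subset M_1$ (sitting inside via the other factors' highest-weight vectors, or more carefully via a suitable vector) and composed/projected onto the $M^i_3$-isotypic pieces of $M_3$ yields intertwining operators $I^i$ of type $\binom{M^i_3}{M^i_1\,M^i_2}$ for $V^i$; conversely $I$ is recovered from the $I^i$ by the tensor construction above. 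The key structural input is the multiplicity-one type statement: because each $M^i_j$ is irreducible with finite-dimensional homogeneous components (Theorem \ref{t2.1}), Schur's lemma over the algebraically closed field $\F$ forces the ``intertwining'' data on each factor to be rigid, and Lemma \ref{dl} (the vanishing criterion: an intertwining operator among irreducibles that kills one nonzero pair of vectors is zero) prevents degenerate mixing between the tensor factors. One must verify that the proof of \cite{DMZ} uses only commutativity, associativity, and skew-symmetry of intertwining operators in forms that remain valid for twisted modules over $\F$; the twisted commutativity/associativity is exactly what makes the twisted Jacobi identity (4) manipulable, and the remark preceding Lemma \ref{dl} already asserts these hold in the present setting.

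The main obstacle I expect is the positive-characteristic bookkeeping in the reverse direction, specifically ensuring that the conformal-weight shifts $\lambda^i_j\in\BQ(p)$ behave well under the reductions. Since $N^{M^i_3}_{M^i_1 M^i_2}$ is independent of the choice of $\lambda^i_j$ (cited from \cite{DR1} in the excerpt), I can and would normalize the $\lambda$'s conveniently — but I must make sure the normalization on the tensor product is compatible with a simultaneous normalization on the factors, so that $\lambda_j=\sum_i\lambda^i_j$ holds and the formal powers of $z$ appearing in $I$ and in the $I^i$ match up; otherwise the factorization $I=\bigotimes I^i$ would be off by an overall power of $z$ that is not an intertwining-operator isomorphism. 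A related subtlety is that in characteristic $p$ one cannot invoke the full analytic machinery (contour integration, convergence of products of intertwining operators) used in the complex case; instead one works purely formally, using the $\delta$-function identities and the finiteness of the lower-truncation conditions (1) and (3), exactly as \cite{DR1} does when it proves independence of the fusion rules from the $\lambda_i$. Once these compatibilities are in place, the two inequalities combine to give $N^{M^1_3\otimes\cdots\otimes M^n_3}_{M^1_1\otimes\cdots\otimes M^n_1\;M^1_2\otimes\cdots\otimes M^n_2}=\prod_{i=1}^n N^{M^i_3}_{M^i_1 M^i_2}$, which is the assertion.
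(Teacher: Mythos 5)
Your proposal is essentially the paper's proof: the paper simply asserts that the exact argument of \cite{DMZ} for $\F=\C$ carries over, and your plan — the easy tensor-product injection of intertwining-operator spaces plus the converse factorization following \cite{DMZ}, with attention to the weight shifts $\lambda^i_j$ and the formal (non-analytic) setting over $\F$ — is precisely that adaptation. The extra care you flag about normalizing the $\lambda$'s and avoiding analytic machinery in characteristic $p$ is sensible but does not change the route.
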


We now formulate the notion of tensor product $M_1 \boxtimes M_2$ (which is also called the fusion product and denoted by
$M_1\times M_2$)
of $M_1$ and $M_2$: it is an admissible $g_3$-twisted $V$-module
defined by a universal mapping property (\cite{L3, HL1, HL2}):
 {\it A tensor product} for the ordered pair $(M_1,M_{2})$ is a pair
$(M,F)$ consisting of a weak $g_3$-twisted
$V$-module $M$ and an intertwining
operator $F$ of type
$\left(\begin{array}{c}M\\M_{1}\,M_{2}\end{array}\right)$ such that
the following universal
property holds: for any weak $g_3$-twisted $V$-module $W$ and any intertwining
operator $I(\cdot,z)$ of type
$\left(\begin{array}{c}W\\M_{1}\,M_2\end{array}\right)$, there
exists a unique $V$-homomorphism $\psi$ from $M$ to $W$ such that
$I(\cdot,z)=\psi\circ F(\cdot,z)$. (Here $\psi$ extends canonically to a
linear map from $M\{z\}$ to $W\{z\}$.)

It is easy to show that if $V$ is $g_k$-rational for $k=1,2,3$, then  $M_1 \boxtimes M_2$ exists and is equal to
$\oplus_{M_3\in {\cal M}(g_3)}N_{M_1,M_2}^{M_3}M_3$ where ${\cal M}(g_3)$ is the set of inequivalent irreducible $g_3$-twisted $V$-modules.

For studying the modular framed vertex operator later, we need the following definition:
\begin{definition} A $\D$-form of  a vertex superalgebra $V$ over $\C$  is a $\D$-submodule $I$ of $V$  such that $(I, Y, \1, \omega)$ is a vertex operator superalgebra over $\D$ and for each $n$, $I_n=I\cap V_n$ is a $\D$ form of $V_n$ for all $n$
in the sense that $I_n$ is a free $\D$-module whose rank is equal to the dimension of $V_n.$
\end{definition}

Unlike \cite{DG1}, we do not assume that there is a nondegenerate symmetric bilinear form on $V.$ Also note that $I\cap V_{\bar{0}}$ is a $\D$ vertex operator algebra form for $V_{\bar{0}}.$

\begin{definition} Let $V$ be a vertex operator superalgebra and $I$ a $\D$-form of $V.$
Assume that $M=\oplus_{h \in \C}M_{h}$ is an ordinary $V$-module \cite{DLM1}. A $\D$-form $S=\oplus_{h\in \C}S_{h}$ with $S_{h}=S\cap M_{\h}$ of $M$ over  $I$  is a module for the vertex operator algebra $I$  
and each $S_h$ is a $\D$-form of $M_h.$ 
\end{definition}

We remark that if $I$ is a $\D$-form of $V$, then $I\cap V_{\bar{1}}$ is a $\D$-form of  $V_{\bar{1}}$ over the $\D$-form $I\cap V_{\bar{0}}.$

\section{Virasoro vertex algebras}
\setcounter{equation}{0}
In this section we review  the Virasoro algebra over an integral domain $\mathbb{D}$ and discuss the highest weight modules and related vertex operator algebras. In the rest of the paper we assume that $\F$ is a field with $\ch \F\ne 2.$

Consider the integral domain $\mathbb{D}$ such that $\ha\in \mathbb{D}.$ Set
$Vir_\mathbb{D}=\oplus_{n\in \Z}\mathbb{D}L_n\oplus \mathbb{D}C$
subject to the relation
$$[L_m,L_n]=(m-n)L_{m+n}+\frac{m^3-m}{12}\delta_{m+n,0}C, \  [Vir_\mathbb{D},C]=0$$
for $m,n\in \Z.$ Note that $m^3-m$ is divisible by $3$ for any $m\in \Z,$ the commutators make sense.
In particular we can define the Virasoro algebra $Vir_\F$ for any field $\F$ with $\ch \F\ne 2.$
In the rest of paper we always use $\F$ for such a field.  It is also clear that $Vir_\F=\F\otimes_\D Vir_\D.$

 As in the complex case, for any $c,h\in \mathbb{D}$ we set
$$V(c,h)_\mathbb{D}=U(Vir_\mathbb{D})\otimes_{U(Vir_\mathbb{D}^{\geq0})}\mathbb{D}$$
where $Vir_\mathbb{D}^{\geq0}$ is the subalgebra generated by $L_n$ for $n\geq 0$ and $C,$ and $\mathbb{D}$ is a
 $Vir_\mathbb{D}^{\geq0}$-module such that $L_n1=0$ for $n>0,$ $L_01=h$ and $C1=c.$ Then
  $$V(c,h)_\mathbb{D}=\oplus_{n\geq 0}V(c,h)_\mathbb{D}(n)$$
  is $\Z$-graded where $V(c,h)_\mathbb{D}(n)$ has a basis
  $$\{L_{-n_1}\cdots L_{-n_k}v_{c,h}|n_1\geq \cdots \geq n_k\geq 1, \sum_{i}n_i=n\}$$
  where $v_{c,h}=1\otimes 1.$ The $V(c,h)_\mathbb{D}$ is again is called the Verma module.
  It is easy to see that $L_nV(c,h)_\mathbb{D}(m)\subset V(c,h)_\mathbb{D}(m-n)$ for all $m,n\in\Z.$
Then $V(c,h)_\F$ has a unique maximal graded  submodule $W(c,h)_\F$ such that $L(c,h)_\F=V(c,h)_\F/W(c,h)_\F$ is an irreducible highest weight  $Vir_\F$-module. Moreover, if $\ch\F=0$ then $W(c,h)_\F$ is the unique maximal submodule of $V(c,h)_\F.$ If $\D=\C$,  we will use $V(c,h)$ and $L(c,h)$ to denote $V(c,h)_\C$ and $L(c,h)_\C$, respectively.
   
The following theorem which is the foundation of the framed vertex operator algebra was obtained in \cite{DR1}, \cite{DR2}.
  \begin{thm}\label{harational} Let  $\F$ be an algebraically closed field with $\ch \F\ne 2,7.$

  (1) The $L(\ha, 0)_{\F}$ is a rational vertex operator algebra which has exactly three irreducible modules $L(\ha,h)_\F$ with $h=0,\ha,\se.$

  (2) The fusion product is given by

    $$L(\ha,0)_\BF\times L(\ha,h)_{\BF}=L(\ha,h)_{\BF}$$
    for $h=0,\ha,\se,$

 $$L(\ha,\ha)_\BF\times L(\ha,\ha)_\BF=L(\ha,0)_\BF,$$

  $$L(\ha,\ha)_\BF\times L(\ha,\se)_\BF=L(\ha,\se)_\BF,$$

  $$L(\ha,\se)_\BF\times L(\ha,\se)_\BF=L(\ha,0)_\BF+L(\ha,\ha)_\BF.$$
  \end{thm}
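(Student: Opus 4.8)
The plan is to reduce everything to the characteristic-zero case via a reduction-mod-$p$ argument, using the integral (or $\Z[\frac{1}{2}]$-) form of the Virasoro algebra and of $L(\frac{1}{2},0)$ that the excerpt has set up. The starting point is that over $\C$ the statements of Theorem~\ref{harational} are classical: rationality of $L(\frac{1}{2},0)$ and the list of three irreducibles go back to the minimal-model representation theory of the Virasoro algebra, and the fusion rules are the computation of \cite{DMZ}, \cite{W}. So the real content is transporting these facts to $\F$ with $\ch\F\neq 2,7$. I would first invoke the work already cited (\cite{DR1}, \cite{DR2}): there one constructs a $\Z[\frac{1}{2}]$-form $L(\frac{1}{2},0)_{\Z[\frac{1}{2}]}$ (and similarly $L(\frac{1}{2},\frac{1}{2})_{\Z[\frac{1}{2}]}$, $L(\frac{1}{2},\frac{1}{16})_{\Z[\frac{1}{2}]}$), so that $L(\frac{1}{2},h)_\F = L(\frac{1}{2},h)_{\Z[\frac{1}{2}]}\otimes_{\Z[\frac{1}{2}]}\F$. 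The exclusion of $7$ is exactly the condition guaranteeing that the relevant Gram determinants controlling the singular vectors (the ones cutting $V(c,h)$ down to $L(c,h)$ at $c=\frac12$, $h\in\{0,\frac12,\frac1{16}\}$) are units in $\F$, so that the structure of $W(c,h)$ and hence of $L(c,h)$ is unchanged after base change; this is the key numerical input and I would cite it rather than recompute it.

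For part~(1), given the $\D$-form I would argue as follows. That $L(\frac{1}{2},0)_\F$ is a vertex operator algebra over $\F$ with the three claimed modules being admissible $L(\frac{1}{2},0)_\F$-modules is a direct base-change statement. Irreducibility of each $L(\frac{1}{2},h)_\F$ follows because the singular-vector relations defining it remain in force mod $p$ (the coefficients involved are $p$-integral and the leading ones are units when $p\neq 2,7$). Rationality — that \emph{every} admissible module is a finite direct sum of these three — is the substantive point; here I would appeal to the proof in \cite{DR1}, \cite{DR2}, which runs through Zhu's algebra $A(L(\frac{1}{2},0)_\F)$: one shows $A(L(\frac{1}{2},0)_\F)$ is the semisimple $3$-dimensional algebra $\F\times\F\times\F$ (again the mod-$p$ reduction of the relation $x(x-\frac12)(x-\frac1{16})=0$ in $A(L(\frac{1}{2},0))$, whose discriminant involves only $2$ and $7$), together with the $C_2$-cofiniteness / finiteness of the grading, which gives complete reducibility and finitely many irreducibles by Theorem~\ref{t2.1}-type arguments. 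I would state this as already established in the cited papers and only indicate the discriminant computation as the reason the hypotheses on $\ch\F$ enter.

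For part~(2), the fusion rules, my approach is to compute $N^{M_3}_{M_1M_2}$ via Zhu's theory and a mod-$p$ comparison with the known complex answer. Concretely, by \cite{DR1} the fusion rule $\dim I\binom{M_3}{M_1\,M_2}$ for irreducible $L(\frac{1}{2},0)_\F$-modules equals the dimension of a $\Hom$-space over the Zhu bimodule $A(M_1)$ between appropriate top levels — a space cut out by an explicit finite system of linear equations whose coefficients lie in $\Z[\frac{1}{2}]$ (indeed in $\Z[\frac{1}{2},\frac{1}{7}]$ after clearing the singular-vector denominators). Over $\C$ this system has the solution space of the announced dimension; the matrix of the system has a specified rank over $\Q$, and that rank is preserved upon reduction mod $p$ precisely when $p$ does not divide the relevant minors — and one checks these minors are products of powers of $2$ and $7$. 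Hence the four fusion products over $\F$ match the four over $\C$. The only product that is not forced to be at most one-dimensional, namely $L(\frac12,\frac1{16})\times L(\frac12,\frac1{16})$, needs the two-dimensional solution space to survive; this is where I expect the main obstacle, since one must check no extra "accidental" collapse of rank (and no accidental extra intertwiner) occurs mod $p$, i.e. that the $2\times2$-minor certifying rank, and simultaneously the $3\times3$-minor certifying that the rank is not larger, are both units in $\F$. Concretely I would isolate the finitely many entries of these minors, observe they are supported on the primes $2$ and $7$, and conclude. An alternative, cleaner route for this last step is to use the constructions of the later sections: $L(\frac12,\frac1{16})$ appears inside the Clifford-algebra vertex operator superalgebra $V(H_\F)$ and its canonically twisted module $V(H_\F,d)$, whose decomposition into $L(\frac12,0)_\F^{\otimes r}$-modules is written down explicitly over any $\D$; commutativity and associativity of vertex and intertwining operators (valid over $\D$, as noted before Lemma~\ref{dl}) then exhibit a nonzero intertwining operator of type $\binom{L(\frac12,0)+L(\frac12,\frac12)}{L(\frac12,\frac1{16})\ L(\frac12,\frac1{16})}$ and Lemma~\ref{dl} together with the top-level dimension count pins the fusion rule at exactly the claimed value. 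Either way, the skeleton is: construct/cite the $\Z[\frac{1}{2}]$-forms, reduce the Zhu-algebra and fusion-rule computations to linear algebra over $\Z[\frac{1}{2},\frac{1}{7}]$, and verify that all denominators/minors that appear are supported on $\{2,7\}$.
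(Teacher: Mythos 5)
The paper does not actually prove this theorem: it is imported verbatim, with the remark that it ``was obtained in \cite{DR1} and \cite{DR2},'' and your proposal likewise defers the substantive content---the $\Z[\frac{1}{2}]$-forms, the rationality statement, and the fusion-rule computation---to those same references, so in effect you are following the paper's approach. Your supplementary sketches (the discriminant of $x(x-\frac{1}{2})(x-\frac{1}{16})$ being supported on the primes $2$ and $7$, and the mod-$p$ comparison of the intertwining-operator linear algebra) are consistent with what \cite{DR1}, \cite{DR2} do, with the one caveat that rationality there is established by a direct analysis of admissible modules via the weight-$6$ singular vector rather than by a general ``semisimple Zhu algebra plus $C_2$-cofiniteness implies complete reducibility'' principle, which is not available in positive characteristic.
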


For the discussion of contragredient module later,  we give the following lemma.
\begin{lem} Let $M$ be a sum of highest weight modules for the Virasoro algebra $Vir_\BF.$ Then $\frac{L_1^n}{n!}$ is well defined on $M$ for $n\geq 0.$
\end{lem}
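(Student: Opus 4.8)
The statement asserts that for a module $M$ which is a sum of highest weight modules for $Vir_\BF$, the operator $\frac{L_1^n}{n!}$ is well defined on $M$ for all $n\geq 0$. The only potential obstruction is that $L_1^n$ might involve a factor $n!$ that is not invertible in $\BF$ (which has prime characteristic $p$), so we must show the action of $L_1^n$ on any element is divisible by $n!$ in a way that survives reduction mod $p$. The plan is to reduce to a single highest weight module, then to a single homogeneous element, and finally to exploit the integral (or $\D$-) structure: on the Verma module $V(c,h)_\D$ with $\D$ containing $\tfrac12$ but not necessarily $\tfrac1{n!}$, one shows $L_1^n$ maps the span of the standard PBW basis vectors into $n!$ times the $\D$-span, hence $\frac{L_1^n}{n!}$ is a well-defined $\D$-linear map and descends to $\BF$.

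First I would reduce to the case $M = L(c,h)_\BF$ a single irreducible (or more generally Verma) highest weight module, since the operator acts componentwise on a direct sum and "well defined" only needs to be checked on each summand; moreover it suffices to treat $V(c,h)_\BF$ since $L(c,h)_\BF$ is a quotient. Next, fix a homogeneous vector $w \in V(c,h)_\BF(m)$. Since $L_1$ lowers degree by $1$, we have $L_1^n w \in V(c,h)_\BF(m-n)$, which is finite dimensional, so the only issue is the scalar $\tfrac1{n!}$; for $n > m$ the vector $L_1^n w$ lies in a zero space, so only finitely many $n$ are in question, namely $0 \le n \le m$. For those, the claim is the divisibility statement: $L_1^n w$ should be expressible as $n!$ times an element that makes sense integrally.

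The key step is to establish this divisibility by working over $\D = \Z[\tfrac12]$ (or even over $\Z[\tfrac16]$, where the central charge and the commutator coefficients already make sense, as noted in the paper's construction of $Vir_\D$ and $V(c,h)_\D$). Here I would argue as follows: the operator $\frac{L_1^n}{n!}$ is already a standard well-defined operator on the complex Verma module $V(c,h)$ — indeed on any $\BC$-module which is a sum of highest weight modules, as recorded in \cite{FHL} — and one checks by a direct PBW computation (or by the sl$_2$-style identity $L_1^n L_{-1} = L_{-1} L_1^n + n L_1^{n-1}(2L_0 + n - 1)$ and similar relations, pushing $L_1$'s to the right past a monomial $L_{-n_1}\cdots L_{-n_k}v_{c,h}$) that $\frac{L_1^n}{n!}$ preserves the $\D$-lattice spanned by the standard basis $\{L_{-n_1}\cdots L_{-n_k}v_{c,h}\}$, because every binomial-type coefficient produced in the straightening process is an integer. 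Thus $L_1^n = n!\cdot(\text{a }\D\text{-linear endomorphism})$ on $V(c,h)_\D$, and reducing mod $p$ (i.e. tensoring with $\BF$ over $\D$) gives a well-defined operator $\frac{L_1^n}{n!}$ on $V(c,h)_\BF = \BF\otimes_\D V(c,h)_\D$, hence on its quotient $L(c,h)_\BF$ and on any sum of such.

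The main obstacle I anticipate is the bookkeeping in the PBW straightening: showing cleanly that commuting $L_1^n$ past an arbitrary monomial $L_{-n_1}\cdots L_{-n_k}$ produces only coefficients in $n!\,\Z[\tfrac16]$ rather than coefficients with spurious denominators. The cleanest route is probably \emph{not} a brute-force induction on the monomial but rather to observe that it suffices to treat the case $k=1$ (a single $L_{-j}$) by induction on $k$ — using that $[L_1^n, L_{-n_1}\cdots L_{-n_k}] = \sum_i L_{-n_1}\cdots [L_1^n,L_{-n_i}]\cdots L_{-n_k}$ plus lower-order corrections — and for $k=1$ the relevant identity $[L_1^n, L_{-j}]$ expands with coefficients $\binom{n}{i}$ times polynomials in $L_0, C$ with \emph{integer} coefficients, so the factor $n!$ divisibility of $L_1^n$ acting on $v_{c,h}$ (where it is zero for $n\ge1$, trivially divisible) propagates. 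I would present this as a short lemma on the integral form, then conclude.
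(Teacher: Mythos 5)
Your key computational point --- that straightening $L_1^n$ past the lowering operators produces only integer coefficients, so the divided power survives reduction to characteristic $p$ --- is exactly the content of the paper's proof, which however runs entirely over $\F$: writing $u=L_mw$ with $m\le -1$ and using $\frac{L_1^n}{n!}\,v=0$ on the highest weight vector, the paper defines recursively $\frac{L_1^n}{n!}u=\sum_{i=0}^n\binom{1-m}{i}L_{m+i}\frac{L_1^{n-i}}{(n-i)!}w$, the coefficients $\binom{1-m}{i}$ being integers. Your route instead passes through characteristic zero: construct the divided power on the complex Verma module, check it preserves the $\Z[\frac12]$-lattice spanned by the PBW monomials, and reduce mod $p$. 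This is a legitimate (and in one respect cleaner) way to make ``well defined'' precise, since the operator becomes the reduction of an honest lattice endomorphism, whereas the paper's recursion is silent about independence of the chosen expression of a vector as a combination of spanning monomials.

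But as written your packaging delivers less than the lemma asserts, in two places. First, the lemma allows an arbitrary highest weight $\lambda\in\F$ (and central charge in $\F$); the image of $\Z[\frac12]$ in $\F$ is only the prime field, so ``$V(c,h)_\D$ reducing to $V(c,h)_\F$'' is not available in general. The repair is to use generic (polynomial) coefficients, or simply to observe --- as your own remark about integer-coefficient polynomials in $L_0,C$ already suggests --- that the integrality lives in the commutation formula itself, independently of $c,h$, which is precisely why the paper can induct directly over $\F$ with no characteristic-zero detour. Second, the lemma concerns arbitrary highest weight modules, i.e.\ arbitrary quotients of Verma modules; you only treat Vermas and assert descent to $L(c,h)_\F$ ``and any sum of such''. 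Descent needs the divided power to preserve the kernel submodule, which is not automatic in characteristic $p$ (submodules for the enveloping algebra need not be stable under divided powers) and is not addressed; the paper's recursion is carried out on $M$ itself and never has to lift to a Verma. Two small slips: $[L_1^n,L_{-1}]=n(2L_0+n-1)L_1^{n-1}=nL_1^{n-1}(2L_0-n+1)$, not $nL_1^{n-1}(2L_0+n-1)$; and replacing $\Z[\frac12]$ by $\Z[\frac16]$ would wrongly exclude characteristic $3$, which the paper permits.
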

\begin{proof} We can assume that $M$ is a highest weight module with the highest weight $\lambda\in \F$ and the highest weight vector $v.$ Then $M$
is spanned by $L_{-n_1}\cdots L_{-n_k}v$ for $n_1\geq \cdots \geq n_k\geq 1.$ Clearly,  $\frac{L_1^n}{n!}v=0.$ Assume that  $u=L_mw$ and  $\frac{L_1^n}{n!}w$ is well defined. Then $\frac{L_1^n}{n!}u=\sum_{i=0}^n{1-m\choose i}L_{m+i}\frac{L_1^{n-i}}{(n-i)!}w$ is well defined for $n\geq 0.$
\end{proof}

  \section{Modular vertex operator algebra $V(H_\F,\Z+\ha)_{\bar 0}$}
\setcounter{equation}{0}

The modular vertex operator algebra $V(H_\F,\Z+\ha)_{\bar 0}$ is investigated. In particular, a minimal set of generators is determined.

Let $\mathbb{D}$ be an integral domain such that $\ch \D\ne 2$ and  $\frac{1}{2}\in \D.$ .  Let $H_\mathbb{D}=\mathbb{D} a$ be one dimensional free $\mathbb{D}$-module
with a nondegenerate bilinear form $(,)$ determined by $(a,a)=1.$ For short we also set $H=H_\C.$
For $Z=\Z$ or $\Z+\ha$ we let $A(H_\mathbb{D},Z)$ be an associative algebra over $\mathbb{D}$ generated by $a(n)$ for $n\in Z$ subject to the relation $a(m)a(n)+a(n)a(m)=\delta_{m+n,0}$ for $m,n\in Z.$ Also let $A(H_\mathbb{D},Z)_+$ be the subalgebra of $A(H_\mathbb{D},Z)$ generated by $a(n)$ for $n>0$ and  $\mathbb{D}$ is  an $A(H_\mathbb{D},Z)_+$ -module such that $a(n)1=0$ for $n>0.$ Consider the induced module $V(H_\mathbb{D},Z)=A(H_\mathbb{D},Z)\otimes_{A(H_\mathbb{D},Z)_+}\mathbb{D}.$ It is well known that $V(H_\F,\Z+\ha)$ is a simple $A(H_\F,\Z+\ha)$-module and $V(H_\F,\Z)$ is a direct sum of two simple $A(H_\F,\Z)$-modules.

Note that 
$$ 
V(H_\mathbb{D},\Z+\ha)=\wedge[a(-n) | n>0, n\in \Z+\ha]$$ and 
$$V(H_\mathbb{D},\Z)=\wedge[a(-n)|n>0, n\in \Z]+\wedge[a(-n)|n>0, n\in \Z]a(0)$$
 as  $\mathbb{D}$-modules and the action of $A(H_\mathbb{D},Z)$ is as follows:
$a(n)$  acts as $\frac{\partial}{\partial a(-n)}$ for $n>0$ and as the multiplication by $a(n)$ for $n\leq 0.$
The $V(H_\mathbb{D},Z)$ decomposes into $V(H_\mathbb{D},Z)_{\bar 0}\oplus V(H_\mathbb{D},Z)_{\bar 1}$ where $V(H_\mathbb{D},Z)_{\bar s}$ is the submodule spanned by the monomials whose length is congruent to $s$ modulo $2.$
Set $\omega=\frac{1}{2}a(-\frac{3}{2})a(-\frac{1}{2})\in V(H_\mathbb{D},\Z+\ha).$
Let $\tau$ be the canonical automorphism of  $V(H_\mathbb{D}, \Z+\ha)$ so that $\tau|_{V(H_\mathbb{D},\Z)_{\bar s}}=(-1)^s.$

The following result is well known (cf. \cite{FFR,KW, L1, L2, KR}).
 \begin{prop}\label{p4.1} (1) $(V(H_\mathbb{D}, \Z+\ha), Y, 1,\omega)$ is a vertex operator superalgebra generated by $a(-\ha)$ with  $$Y(a(-1/2),z)=\sum_{n\in\Z+\ha}a(n)z^{-n-\ha}$$
 and $V(H_\F, \Z+\ha)$ is a simple vertex operator superalgebra.

(2)  $(V(H_\mathbb{D}, \Z), Y)$ is a $\tau$-twisted  admissible $V(H_\mathbb{D}, \Z+\ha)$-module with $$Y(a(-1/2),z)=\sum_{n\in\Z}a(n)z^{-n-\ha}.$$

(3) $V(H,\Z+\ha)_{\bar 0}$ is isomorphic to $L(\ha,0),$ $V(H ,\Z+\ha)_{\bar 1}$ is isomorphic to $L(\ha,\ha),$
and $V(H,\Z)_{\bar s}$ is isomorphic to $L(\ha,\se)$ for $s=0,1$ as modules for the Virasoro algebra.

\medskip

In (4)-(5), we take  $\D=\Z[\ha].$

(4) $V(H_\D,\Z+\ha)$ is an $\D$-form of $V(H,\Z+\ha)$ and $V(H_\D,\Z)$ is an $\D$-form of $V(H,\Z)$ over $V(H,\Z+\ha).$
Moreover, $V(H_\F,Z)=\F\otimes_\D V(H_\D,Z)$ for any field $\F$ with $\ch\F\ne 2.$

(5) $V(H_\D, \Z+\ha)_{\bar 0}$ is an $\D$-form of  $V(H,\Z+\ha)_{\bar 0}$ and $V(H_\D,Z)_{\bar s}$ is an $\D$-form
of $V(H,Z)_{\bar s}$ over $V(H_\D,\Z+\ha)_{\bar 0}$ for $Z=\Z+\ha,\Z$ and $s=0,1.$

(6) For any field $\F,$ $V(H_\F,\Z+\ha)_{\bar 0}$ is a simple vertex operator algebra and $V(H_\F,Z)_{\bar s}$ for all $s$ are irreducible admissible $V(H_\F,\Z+\ha)_{\bar 0}$-modules.
Moreover, $V(H_\F,\Z)_{\bar 0}$ and $V(H_\F,\Z)_{\bar 1}$ are isomorphic.
\end{prop}
\begin{proof} 

(1)-(5) are clear. We only need to prove (6).  First we show that $V(H_\F,Z)_{\bar s}$ is an admissible $V(H_\F,\Z+\ha)_{\bar 0}$-module. We set
$$V(H_\F,\Z+\ha)_{\bar s}(n)=\<a(-n_1)\cdots a(-n_k)\in V(H_\F,\Z+\ha)|n_1>\cdots >n_k>0,\sum_{i}n_i=n+\ha s\>$$
and
$$V(H_\F,\Z)_{\bar s}(n)=\<a(-n_1)\cdots a(-n_k)\in V(H_\F,\Z)_{\bar s} |n_1>\cdots >n_k>0,\sum_{i}n_i=n\>$$
for $s=0,1.$
Then
$$V(H_\F,Z)_{\bar s}=\oplus_{n\geq 0}V(H_\F,Z)_{\bar s}(n)$$
and
$$u_nV(H_\F,Z)_{\bar s}(m)\subset V(H_\F,Z)_{\bar s}(m+t-n-1)$$
for $u\in (V(H_\F,\Z+\ha)_{\bar 0})_t=V(H_\F,\Z+\ha)_{\bar 0}(t)$ for all $m,n,t.$

We next to prove each $V(H_\F,Z)_{\bar s}$  is an irreducible admissible $V(H_\F,\Z+\ha)_{\bar 0}$-module.
We claim that $V(H_\F,Z)_{\bar s}$ is irreducible under the operators $a(m)a(n)$ for $m,n\in Z.$ We now prove that $V(H_\F,\Z+\ha)_{\bar 0}$ is irreducible under the operators $a(m)a(n)$ for $m,n\in \Z+\ha.$
 That is, any element $0\ne u\in V(H_\F,\Z+\ha)_{\bar 0}$
generates $V(H_\F,\Z+\ha)_{\bar 0}$ using operators $a(m)a(n)$ for $m,n\in \Z+\ha.$ If $u=1$ we see that any $a(-m_1)\cdots a(-m_s)\in V(H_\F,\Z+\ha)_{\bar 0}$ for $m_1>\cdots > m_s>0$ is equal to  $a(-m_1)\cdots a(-m_s)1.$
If $u$ is not a multiple of $1$ there exist $0<m_1<\cdots <m_s$ such that $a(m_1)\cdots a(m_s)u$ is a nonzero multiple of $1.$ That is, $1$ can be generated from $u$ using operators $a(m)a(n).$ As before, any vector $v$ in $V(H_\F,\Z+\ha)_{\bar 0}$ can be generated from $1$ and $u$ using operators $a(m)a(n).$ The proof is similar for the other cases.

 Let $w\in V(H_\F,Z)_{\bar s}.$ 
By Proposition  4.5.7 (which holds in the current situation) of \cite{LL}, we see that
there exist  $c_i\in \F,$ $m_i\in \ha+\Z$ and $n_i\in \Z$  such that 
$$a(m)a(n)w=\sum_{i=1}^kc_i(a(m_i)a(-1/2))_{n_i}w.$$
Note that
$a(m)=a(-\ha)_{m-\ha}$  by (1).   
Since $V(H_\F,Z)_{\bar s}$ is irreducible under the operators $a(m)a(n)$ for $m,n\in \Z+\ha$, 
this shows that $V(H_\F,Z)_{\bar s}$ is an irreducible $V(H_\F,\Z+\ha)_{\bar 0}$-module.  
In particular,
$V(H_\F,\Z+\ha)_{\bar 0}$ is a simple vertex operator algebra.

It remains to show that  $V(H_\F,\Z)_{\bar 0}$ and $V(H_\F,\Z)_{\bar 1}$ are isomorphic. Define
a linear isomorphism $f : V(H_\F,\Z)_{\bar 0}\to V(H_\F,\Z)_{\bar 1}$ such that
$a(-n_1)\cdots a(-n_k)$ for $n_1>\cdots>n_k\geq 0$ is sent to $a(-n_1)\cdots a(-n_k)a(0).$
From the discussion above, it is enough to verify that $ a(s)a(t)f =f a(s)a(t)$ for any $s,t\in\Z$ with $s> t.$ This is clear if both $s,t$ are different from $0.$ We now assume $t=0.$ Then
$$a(s)a(0)f a(-n_1)\cdots a(-n_k)=\frac{\partial}{\partial a(-s)}a(0)a(-n_1)\cdots a(-n_k)a(0)$$
$$f a(s)a(0)a(-n_1)\cdots a(-n_k)=\frac{\partial}{\partial a(-s)}a(0)a(-n_1)\cdots a(-n_k)a(0)$$
are equal for any $n_1>\cdots n_k\geq 0.$ 
\end{proof}

Here we give an explicit expression for the $Y(\omega,z)=\sum_{n\in\Z}L(n)z^{-n-2}.$ For this purpose we define a normal ordering
$$:a(m)a(n):=\left\{\begin{array}{ll} a(m)a(n) & {\rm if}\  m< n\\
0 & {\rm if}\  m=n\\
-a(n)a(m)  & {\rm if}\  m> n
\end{array}\right.$$
It is easy to check that
\begin{equation*}
L(n)=\ha\sum_{j\in Z}j:a(-j)a(n+j):
\end{equation*}
if $Z=\Z+\ha$ or $n\ne 0,$ and
\begin{equation*}
L(0)=\frac{1}{16}+\ha\sum_{j\in \Z}j:a(-j)a(j):.
\end{equation*}

The following result will be useful later.
\begin{lem}\label{al1} For $t\geq 1$ let $u^t=a(-t-\ha)a(-\ha),$ $v=a(-\frac{5}{2})a(-\frac{3}{2})$ and $J=u^3-3v.$ Then
$$[u^t_n,a(m-\ha)]=\left((-1)^t{m+n-1\choose t}-{m-1\choose t}\right)a(m+n-t-\ha),$$
$$[v_n,a(m-\ha)]=\frac{(m-1)(m+n-3)(2m+n-4)}{2}a(m+n-3-\ha)$$
and
$$[J_n,a(m-\ha)]=\lambda_{m,n}a(m+n-3-\ha)$$
for $m,n\in \Z$ where
$$\lambda_{m,n}=-{m+n-1\choose 3}-{m-1\choose 3}-\frac{3(m-1)(m+n-3)(2m+n-4)}{2}.$$
\end{lem}
\begin{proof}  Note that $a(-\ha)_{m-1}=a(m-\ha),$ $a(-\ha)_tu^t=a(-\ha),$ $a(-\ha)_0u^t=-a(-t-\ha),$
$a(-\ha)_iu^t=0$ for $i\geq 0$ with $i\ne 0,t.$ Also, $a(-t-\ha)_s=(-1)^t{s\choose t}a(-\ha)_{s-t}$
We have
\begin{eqnarray*}
&& [u_n^t, a(m-\frac{1}{2})]=-[a(-\frac{1}{2})_{m-1}, u_n^t]\\
& &\ \ \ \ =-\sum_{i\geq 0}{m-1\choose i}(a(-\frac{1}{2})_iu^t)_{m+n-1-i}\\
& &\ \ \ \ =a(-t-\ha)_{m+n-1}-{m-1\choose t}a(-\ha)_{m+n-t-1}\\
& & \ \ \ \ =[(-1)^t{m+n-1\choose t}-{m-1\choose t}]a(m+n-t-\ha).
\end{eqnarray*}
The identity $[v_n,a(m-\ha)]=\frac{(m-1)(m+n-3)(2m+n-4)}{2}a(m+n-3-\ha)$ can be proved similarly.
\end{proof}

Recall from \cite{JLM} that $Vir_\F$ is a restricted Lie algebra whose $p$-map is given by
$C^{[p]}=C$ and for $n\in \Z$,
$$
(L_{n})^{[p]}=\begin{cases}L_{np}& \   \   \mbox{if }p\mid n\\
0& \   \   \mbox{if }p\nmid n.
\end{cases}
$$
Set $J(n)=J_{3+n}.$ Similarly 
we define 
$$
J(n)^{[p]}=\begin{cases}J(np)& \   \   \mbox{if }p\mid n\\
0& \   \   \mbox{if }p\nmid n.
\end{cases}
$$
Recall that $V(H_\F,\ha+\Z)$ and $V(H_\F,\Z)$ are $Vir_\F$-modules.

\begin{prop}\label{pcenter1} For any $n\in\Z,$ $L_n^p-(L_{n})^{[p]}=0,$   $J(n)^p-J(n)^{[p]}=0,$   on $V(H_\F,\ha+\Z), V(H_\F,\Z).$ In particular, both $V(H_\F,\ha+\Z)$ and $V(H_\F,\Z)$
are modules for the restricted Virasoro algebra.
\end{prop}

\begin{proof} We only prove the result for $V(H_\F,\ha+\Z)$ and the proof for $V(H_\F,\Z)$ is similar.  We first show that $[L_n^p-(L_{n})^{[p]}, a(m)]=0$ for $m\in \ha+\Z.$
It is easy to see that
$$(ad L_n)^pa(m)=-(m+n/2)(m+n+n/2)\cdots (m+(p-1)n+n/2)a(m+pn).$$
Clearly, $(ad L_n)^pa(m)=0$ if $p\nmid n,$ and
$$(ad L_n)^pa(m)=-m^pa(m+pn)=-ma(m+pn)=[L_{pn},a(m)]$$
if $p\mid n.$ 
It follows from Lemma in Section 21.4 of \cite{H}  that
$$[L_n^p,a(m)]=(ad L_n)^pa(m)$$
and consequently,
$$[L_n^p-(L_{n})^{[p]}, a(m)]=0.$$

Since $V(H_\F,\ha+\Z)$ is an irreducible module of  $A(H_\mathbb{\F},\ha+\Z)$-module,and $(L_n^p-(L_{n})^{[p]})$ acts on $V(H_\F,\ha+\Z)$ as a constant. If $n=0$ then $(L_n^p-(L_{n})^{[p]})\1=0,$ and
if $n\ne 0$ then $(L_0^p-(L_{0})^{[p]})\1$ lies in the intersection of $V(H_\F,\ha+\Z)(0)$ and $V(H_\F,\ha+\Z)(-pn).$ This forces $(L_n^p-(L_{n})^{[p]})\1=0.$ 

Using Lemma \ref{al1} we can prove  $J(n)^p-J(n)^{[p]}=0$ in the same fashion.  The proof is complete. \end{proof}

\section{Generators of $V(H_\F,\Z+\ha)_{\bar 0}$ }

In this section we prove that $V(H_\F,\Z+\ha)_{\bar 0}$ is generated by $\omega$ and $J.$ 

\begin{thm}\label{tm1} If $\ch \F\ne 7,$ then $V(H_\F,\ha+\Z)_{\bar 0}$ is isomorphic $L(\ha,0)_\F,$
$V(H_\F,\ha+\Z)_{\bar 1}$ is isomorphic $L(\ha,\ha)_\F,$ and $V(H_\F,\Z)_{\bar s}$ is isomorphic $L(\ha,\se)_\F$
for $s=0,1$ as modules for the Virasoro algebra $Vir_\F.$ If $\ch \F= 7,$ $V(H_\F,\ha+\Z)_{\bar 0}$  is generated by $\omega$ and either $v$ or $J.$ 
\end{thm}
\begin{proof} This result when $\ch \F\ne 7$ was given in \cite{DR2}. But there was a gap in the proof.  Our proof here works for all $\F$ with $\ch\F\ne 2.$ Of course, the proof present here also works for $\F=\C.$

Let $W$ be the subalgebra of $V(H_\F,\Z+\ha)_{\bar 0}$ generated by $\omega$ if $\ch \F\ne 7,$ and by $\omega, v$ if $\ch \F= 7.$ We first prove that $V(H_\F,\ha+\Z)_{\bar 0}=W.$
From the proof of (6) of Proposition \ref{p4.1}, 
we need to argue that $a(-m-\ha)a(-n-\ha)\in W$ for $m,n\geq 0.$ 
 We prove this by induction on
$s=m+n.$  First we have the well known relation
$$[L(p),a(q)]=-(q+\frac{p}{2})a(p+q)$$
for $p\in\Z$ and $q\in\frac{1}{2}+\Z.$

If $s=1$ then $a(-3/2)a(-1/2)=2L(-2)\1=2\omega\in W.$ If $s=2$ then $a(-5/2)a(-1/2)=L(-1)\omega\in W.$ If $s=3$
consider the linear system
$$L(-1)a(-\frac{5}{2})a(-\ha)=3a(-\frac{7}{2})a(-\ha)+a(-\frac{5}{2})a(-\frac{3}{2})$$
$$L(-2)a(-\frac{3}{2})a(-\ha)=(2+\ha)a(-\frac{7}{2})a(-\ha)-\frac{3}{2}a(-\frac{5}{2})a(-\frac{3}{2}).$$
Since the coefficient matrix
$$\left(\begin{array}{cc} 3 & 1\\ 2+\ha & -\frac{3}{2}\end{array}\right)$$
has determinant
$-7$,  we see that both $a(-\frac{7}{2})a(-\ha)$ and $a(-\frac{5}{2})a(-\frac{3}{2})$ are in $W$ if $\ch\F\ne 7.$ If $\ch\F= 7,$ then $a(-\frac{7}{2})a(-\ha)$ lies in $W$ as 
$v=a(-\frac{5}{2})a(-\frac{3}{2})$ is 
assumed to be in $W.$

We now assume that the result holds for $m+n\leq s$  with $s\geq 4.$
We have linear equations
\[
\begin{split}
L(-1)a(-s-\ha)a(-\ha)=(s+1)a(-s-\frac{3}{2})a(-\ha)+a(-s-\ha)a(-\frac{3}{2})
\qquad \quad  (1) \\
v_2a(-s-\ha)a(-\ha)=-(s+1)^3a(-s-\frac{3}{2})a(-\ha)-a(-s-\ha)a(-\frac{3}{2}) 
\qquad \quad  (2) \\ 
L(-1)a(-s+1-\ha)a(-\frac{3}{2})=sa(-s-\frac{1}{2})a(-\frac{3}{2})+2a(-s+1-\ha)a(-\frac{5}{2})   \qquad \quad  (3)  \\
L(-2)a(-s+\ha)a(-\ha)=(s+\ha)a(-s-\frac{3}{2})a(-\ha)+\frac{3}{2}a(-s+\ha)a(-\frac{5}{2})  \qquad  \quad (4)
\end{split}
\]
Note that if $s\not\equiv 0, -1,-2$,  the first two equations have unique solutions. That is, both $a(-s-\frac{3}{2})a(-\ha)$ and $a(-s-\ha)a(-\frac{3}{2})$ are linear combinations
of $L(-1)a(-s-\ha)a(-\ha)$ and $v_2a(-s-\ha)a(-\ha).$ If $s\equiv 0, -1,-2,$ the coefficient matrix of the linear system consisting of equations (1), (3), (4)
$$\left(\begin{array}{ccc} s+1 & 1& 0 \\ 0&  s & 2\\ s+\ha & 0& \frac{3}{2}\end{array}\right)$$
has determinant $\frac{3}{2}s(s+1)+2(s+\ha)$, 
which is $\ne 0$  if $s\equiv 0, -1.$
By induction assumption, the vectors in the left hand side of the linear system lie in $W.$ Solving the linear system above shows that $a(-s-1-\ha)a(-\ha)$ lies in $W$  with 
$s \not\equiv -2.$

Let  $m,n\geq 0$ such that $m+n=s+1.$ By Proposition 4.5.7  of \cite{LL},  we see that
\begin{eqnarray*}
& &a(-m-\ha)a(-n-\ha)=a(-\ha)_{-m-1}a(-\ha)_{-n-1}\1\label{e4.3}\\
& &\ \ \ =\sum_{i=0}^{n}{-m-1\choose i}(a(-\ha)_{-m-1-i}a(-\ha))_{-n-1+i}\1. \nonumber
\end{eqnarray*}
Since $a(-\ha)_{-m-1-i}=a(-m-i-\ha)$ and $i\leq n,$ this 
implies that $a(-m-\ha)a(-n-\ha)$ lies in $W$ for all $m+n\leq s+1$  with $s \not\equiv -2$ from induction assumption.

The proof for $s\equiv -2$ is much more complicated and is divided into several steps. From the discussion above, it suffices to show that $a(-s-1-\ha)a(-\ha)\in W.$

(1) Let $s=2np-2.$ Then
\[
\begin{split}
&\ \ \ \ \ L(-1)a(-s+i-\ha)a(-i-\ha)\\
&=-(i+1)a(-s-1+i-\ha)a(-i-\ha)+(i+1)a(-s+i-\ha)a(-i-1-\ha)
\end{split}
\]
for $0\leq i\leq s.$ Thus if $i\leq p-2$
$$a(-s+i-\ha)a(-i-1-\ha)\equiv a(-s-1-\ha)a(-\ha)\ (\mod\  W).$$
Similarly,
$$a(-s+jp+i-\ha))a(-jp-i-1-\ha)\equiv a(-s-1+jp-\ha)a(-jp-\ha)\ (\mod\  W)$$
for $j<2n$ and $i=0,...,p-2.$

(2)  Lemma \ref{al1} gives
\begin{eqnarray*}
& &u^t_0a(-i-\ha)a(-j-\ha)=[(-1)^t{-i-1\choose t}-{-i-1\choose t}]a(-i-t-\ha)a(-j-\ha)\\
& &\ \ \ \ \ \ +[(-1)^t{-j-1\choose t}-{-j-1\choose t}]a(-i-\ha)a(-j-t-\ha).
\end{eqnarray*}
In particular, for any positive odd integer $n,$
\begin{eqnarray*}
& &u^{np-2}_0a(-i-\ha)a(-j-\ha)=-2{-i-1\choose np-2}a(-i-np+2-\ha)a(-j-\ha)\\
& &\ \ \  -2{-j-1\choose np-2}a(-i-\ha)a(-j-np+2-\ha).
\end{eqnarray*}

(3) Let $s=np-2$ with $n$ odd.  Then $ a(-np+2-\ha)a(-\ha)$ lies in $W$ by induction assumption.
 Note that
$$L(-1)a(-np+2-\ha)a(-\ha)=-a(-np+1-\ha)a(-\ha)+a(-np+2-\ha)a(-1-\ha).$$
Also,
\begin{eqnarray*}
& &u^{np-2}_0a(-\ha)a(-1-\ha)=-2{-1\choose np-2}a(-np+2-\ha)a(-1-\ha)\\
& &\ \ \ -2{-2\choose np-2}a(-\ha)a(-np+1-\ha)\\
& &\ \ \ =2a(-np+2-\ha)a(-1-\ha)+2a(-np+1-\ha)a(-\ha).
\end{eqnarray*}
Since $L(-1)a(-np+2-\ha)(-\ha)$ and $u^{p-2}_0a(-\ha)a(-1-\ha)$ lie in $W$,  we immediately see that
both $a(-np+1-\ha)a(-\ha)$ and $a(-np+2-\ha)a(-1-\ha)$ lie in $W.$

(4) Let $s=np-2$ with $n$ even. Then
\begin{eqnarray*}
& &\ \ \  u^{(n-1)p-1}_{-1}a(-p+1-\ha)a(-\ha)\\
& &=[{-p-1\choose (n-1)p-1}-{-p\choose (n-1)p-1}]a(-np+1-\ha)a(-\ha)\\
& &\ \ \ +[{-2\choose (n-1)p-1}-{-1\choose (n-1)p-1}]a(-p+1-\ha)]a(-(n-1)p-\ha)\\
& &=(n-1)a(-np+1-\ha)a(-\ha)+a(-(n-1)p-\ha)a(-p+1-\ha)\\
& &\equiv na(-np+1-\ha)a(-\ha)\ (\mod\ W),
\end{eqnarray*}
where we have used (1) in the last step. So if $n$ is not a multiple of $p,$ $a(-np+1-\ha)a(-\ha)$ lies in $W.$

Now we assume that $n$ is a multiple of $p.$ Write  $np=mp^k$ with $m$ being even, and $m,p$ being coprime. Clearly,
$k\geq 2.$ We have
\begin{eqnarray*}
& &\ \ \  u^{(m-1)p^k-1}_{-1}a(-p^k+1-\ha)a(-\ha)\\
& &=[{-p^k-1\choose (m-1)p^k-1}-{-p^k\choose (m-1)p^k-1}]a(-mp^k+1-\ha)a(-\ha)\\
& &\ \ \ +[{-2\choose (m-1)p^k-1}-{-1\choose (m-1)p^k-1}]a(-p^k+1-\ha)a(-(m-1)p^k-\ha).
\end{eqnarray*}
A straightforward computation gives 
$${-p^k-1\choose (m-1)p^k-1}=\frac{\prod_{i=0}^{p^k-1}[{(m-1)p^k+i}]}{p^k!}=\frac{\prod_{i=0}^{p^{k-1}-1}[(m-1)p^k+ip]}{\prod_{i=1}^{p^{k}-1}ip}=m-1,$$
$${-p^k\choose (m-1)p^k-1}=\frac{\prod_{i=0}^{p^k-2}[{(m-1)p^k+i}]}{(p^k-1)!}=\frac{\prod_{i=0}^{p^{k-1}-2}[(m-1)p^k+ip]}{\prod_{i=1}^{p^{k-1}-1}ip}=0,$$
$${-2\choose (m-1)p^k-1}=\frac{[(m-1)p^k]!}{[(m-1)p^k-1)]!}=0,\  {-1\choose (m-1)p^k-1}=1.$$
Note that 
$$a(-p^k+1-\ha)a(-(m-1)p^k-\ha)\equiv {-p^k+1\choose (m-1)p^k}a(-mp^k+1-\ha)a(-\ha) \ (\mod\ W)$$
and 
$${-p^k+1\choose (m-1)p^k}=-\frac{\prod_{i=1}^{p^k-2}[{(m-1)p^k+i}]}{(p^k-2)!}=-1.$$
Thus 
$$ u^{(m-1)p^k-1}_{-1}a(-p^k+1-\ha)a(-\ha)\equiv ma(-mp^k+1-\ha)a(-\ha) \ (\mod \ W)$$
and $a(-mp^k+1-\ha)a(-\ha)$ lies in $W.$  

So for any $s\geq 1$ we know that $a(-s-1-\ha)a(-\ha)\in W.$ As a result, $V(H_\F,\ha+\Z)_{\bar 0}=W.$ The rest of the theorem is clear by noting that if $\ch \F\ne 7$, 
$ u^t$ can be generated from $\omega.$
\end{proof}

The representation theory for $V(H_\F,\ha+\Z)_{\bar 0}\cong L(\ha,0)_\F$ 
when $\ch\F\ne 7$ is clear from \cite{DR2}. We will classify irreducible  $V(H_\F,\ha+\Z)_{\bar 0}$-modules  with $\ch\F=7$ in the next two sections.

\section{A spanning set of $V(H_\F,\ha+\Z)_{\bar 0}$}
We assume that $\ch\F=7$ and give a nice spanning set of  $V(H_\F,\ha+\Z)_{\bar 0}$ in this section. 
By Theorem \ref{tm1},  $V(H_\F,\ha+\Z)_{\bar 0}$ is generated by $\omega, J.$ Moreover $J$ is a highest weight 
vector for the Virasoro algebra with highest weight $4$ \cite{DR2}. 

\begin{thm}\label{tspan} $V(H_\F,\ha+\Z)_{\bar 0}$ has a spanning set
$$L(-n_1)\cdots L(-n_k)J_{-m_1}\cdots J_{-m_l}\1$$
for $n_1\geq \cdots\geq n_k\geq 2,$ $m_1\geq \cdots\geq m_l\geq 1.$
\end{thm}
\begin{proof} The proof is divided into several steps.

1) Let $i\geq -1.$ Then $J_iJ$ lie in $U(Vir_\F)\1+U(Vir_\F)J.$ Note that $J_iJ\in V(H_\F,\ha+\Z)_{\bar 0}(8-i-1).$ So it is good enough to show that $V(H_\F,\ha+\Z)_{\bar 0}(8-i-1)$ is a subspace
of $U(Vir_\F)\1+U(Vir_\F)J.$ This is clear for $i\geq 3.$ We deal with other $i$ case by case.

(1a) Since $V(H_\F,\ha+\Z)_{\bar 0}(5)$ is spanned by $a(-4-\ha)a(-\ha), a(-3-\ha)a(-1-\ha)$ and
$$L(-1)a(-3-\ha)a(-\ha)=4a(-4-\ha)a(-\ha)+a(-3-\ha)a(-1-\ha),$$
$$ L(-1)a(-2-\ha)a(-1-\ha)=3a(-3-\ha)a(-1-\ha),$$
 $V(H_\F,\ha+\Z)_{\bar 0}(5)$ is a subspace of   $U(Vir_\F)\1+U(Vir_\F)J.$

(1b) The subspace 
$V(H_\F,\ha+\Z)_{\bar 0}(6)$ is spanned by $a(-5-\ha)a(-\ha), a(-4-\ha)a(-1-\ha), a(-3-\ha)a(-2-\ha).$ The linear system
$$L(-1)a(-4-\ha)a(-\ha)=5a(-5-\ha)a(-\ha)+a(-4-\ha)a(-1-\ha)$$
$$L(-1)a(-3-\ha)a(-1-\ha)=4a(-4-\ha)a(-1-\ha)+2a(-3-\ha)a(-2-\ha)$$
$$L(-2)a(-3-\ha)a(-\ha)=a(-5-\ha)a(-\ha)+5a(-3-\ha)a(-2-\ha)$$
has a unique solution for $a(-5-\ha)a(-\ha), a(-4-\ha)a(-1-\ha), a(-3-\ha)a(-2-\ha)$ in terms of $L(-1)a(-4-\ha)a(-\ha), L(-1)a(-3-\ha)a(-1-\ha), L(-2)a(-3-\ha)a(-\ha).$ Thus
$V(H_\F,\ha+\Z)_{\bar 0}(6)$ is a subspace of   $U(Vir_\F)\1+ U(Vir_\F)J.$

 (1c) The subspace $V(H_\F,\ha+\Z)_{\bar 0}(7)$ is spanned by $a(-6-\ha)a(-\ha), a(-5-\ha)a(-1-\ha), a(-4-\ha)a(-2-\ha).$ Solving the linear system
 $$L(-1)a(-5-\ha)a(-\ha)=6a(-6-\ha)a(-\ha)+a(-{5}-\ha)a(-1-\ha)$$
 $$L(-1)a(-4-\ha)a(-1-\ha)=5a(-5-\ha)a(-1-\ha)+2a(-4-\ha)a(-2-\ha)$$
$$L(-1)a(-3-\ha)a(-2-\ha)=4a(-{4}-\ha)a(-2-\ha)$$
again shows that $V(H_\F,\ha+\Z)_{\bar 0}(7)$ is a subspace of  $U(Vir_\F)\1+U(Vir_\F)J.$

(1d) The subspace 
$V(H_\F,\ha+\Z)_{\bar 0}(8)$ is spanned by $a(-7-\ha)a(-\ha), a(-6-\ha)a(-1-\ha), a(-5-\ha)a(-2-\ha), a(-4-\ha)a(-3-\ha).$ Solving the linear system
$$L(-1)a(-6-\ha)a(-\ha)=a(-6-\ha)a(-1-\ha)$$
 $$L(-1)a(-5-\ha)a(-1-\ha)=6a(-6-\ha)a(-1-\ha)+2a(-5-\ha)a(-2-\ha)$$
 $$L(-1)a(-4-\ha)a(-2-\ha)=5a(-5-\ha)a(-1-\ha)+3a(-4-\ha)a(-3-\ha)$$
 $$L(-2)a(-5-\ha)a(-\ha)=3a(-7-\ha)a(-\ha)+5a(-5-\ha)a(-2-\ha)$$
 again gives the desired result.

One can also give a conceptional proof somehow by noting from \cite{DR2} that $V(H_\F,\ha+\Z)_{\bar 0}$ is a completely reducible $Vir_\F$-module and $L(\ha,0)_\F$ has exactly 2 inequivalent irreducible
modules $L(\ha,0)_\F$ and $L(\ha,\ha)_\F.$

2) For $m,n\in\Z,$
$$[J_m,J_n] = \sum_{i=0}^\infty\binom{m}{i}(J_iJ)_{m+n-i}.$$
We need to compute $J_iJ$ explicitly for $8\geq i\geq 0$ as $J_iJ=0$ if $i>8.$  A straightforward computation using Lemma  \ref{al1} gives
$$J_0J=a(-6-\ha)a(-\ha)+2a(-5-\ha)a(-1-\ha)+2a(-4-\ha)a(-2-\ha),$$
$$J_1J=-a(-5-\ha)a(-\ha)-a(-3-\ha)a(-2-\ha),$$
$$J_2J=-a(-4-\ha)a(-\ha)+a(-3-\ha)a(-1-\ha),$$
$$J_3J=a(-3-\ha)a(-\ha)-a(-2-\ha)a(-1-\ha)=-J+3L(-1)^2\omega$$
$$J_4J=3a(-2-\ha)a(-\ha)=3L(-1)\omega,$$
$$J_5J=3a(-1-\ha)a(-\ha)=6\omega,$$
$$J_6J=J_7J=0.$$

By 1), all $J_iJ$ are linear combinations of
\[
L(-m_1)\cdots L(-m_s)\1,\quad
L(-n_1)\cdots L(-n_t)J
\]
where $m_1\geq m_2\geq\dots\geq m_s\geq 2$, $n_1\geq n_2\geq \dots\geq n_t\geq 1$
and $s,t\leq 3$. Note that for any vertex operator algebra $V,$ $a,b\in V$
and $m,n\in\Z,$ $(a_mb)_n$ is a linear combination of operators
$a_sb_t$ and $b_ta_s$ for $s,t\in \Z.$  Using
\begin{equation*}\label{lj}
[L(m),J_n]=(-m-n-1)J_{m+n}+4(m+1)J_{m+n}=(3m-n+3)J_{m+n}
\end{equation*}
we see for any $m,n\in\Z$,  the commutator $[J_m,J_n]$ is a linear combinations of
\[
L(p_1)\cdots L(p_s),\quad
L(q_1)\cdots L(q_t)J_r
\]
where $p_1,\dots,p_s,q_1,\dots,q_t,r\in\Z$ and $s,t\leq 3$.

3) By Theorem \ref{tm1} and 2) we know that $V(H_\F,\ha+\Z)_{\bar 0}$  is spanned by
\begin{align*}
&\left\{
L(m_1)L(m_2)\cdots L(m_s)J_{n_1}J_{n_2}\cdots J_{n_t}\1\,|\,
m_i,n_j\in\Z
\right\}.
\end{align*}
The exact proof of Proposition 3.4 of \cite{DN} shows
that
$V(H_\F,\ha+\Z)_{\bar 0}$ is spanned
by
$$L(-n_1)\cdots L(-n_k)J_{-m_1}\cdots J_{-m_l}\1$$
for $n_1\geq \cdots n_k\geq 2,$ $m_1\geq \cdots m_l\geq 1.$
\end{proof}

\section{Classification of irreducible modules}
In this section we classify the irreducible $V(H_\F,\ha+\Z)_{\bar 0}$-modules with $\ch \F=7.$ 
The main tool is the $A(V)$-theory developed in \cite{Z} and \cite{DR1}.

\begin{lem}\label{3modules}  Assume $p=7.$ Then 
$$o(J)|_{V(H_\F,\ha+\Z)_{\bar 0}(0)}=0,o(J)|_{V(H_\F,\ha+\Z)_{\bar 1}(0)}=1, o(J)|_{V(H_\F,\Z)_{\bar i}(0)}=4$$ 
for $i=0,1,$ 
where $o(J)$ denotes the degree preserving operator of $J$.
In particular, the irreducible $V(H_\F,\ha+\Z)_{\bar 0}$-modules $V(H_\F,\ha+\Z)_{\bar 0}, V(H_\F,\ha+\Z)_{\bar 1}, V(H_\F, \Z)_{\bar 0}$ are inequivalent.
\end{lem}
\begin{proof} It is clear that  $o(J)|_{V(H_\F,\ha+\Z)_{\bar 0}(0)}=0.$ Note that $V(H_\F,\ha+\Z)_{\bar 1}(0)=\F a(-\ha),$ $V(H_\F,\Z)_{\bar 0}(0)=\F 1$ and $V(H_\F,\Z)_{\bar 1}(0)=\F a(0).$
From Lemma \ref{al1} we have
$$o(J)a(-\ha)=(u^3_3-3v_3)a(-\ha)=-[{2\choose 3}+{-1\choose 3}]a(-\ha)=a(-\ha).$$

The calculation of $o(J)$ on $1,a(0)\in V(H_\F,\Z)$ are more complicated as the definition of $Y(J,z)$ involves more.
Set
$$\Delta(z)=\frac{1}{4}\sum_{m,n\geq 0}C_{mn}a(m+\ha)a(n+\ha)z^{-m-n-1}$$
where
$$C_{mn}=\frac{m-n}{m+n+1}{-\ha\choose m}{-\ha\choose n}.$$
One can easily check that $C_{mn}$ is a well defined number in $\Z_p$ even $m+n+1\equiv 0.$  For $n_1,...,n_r\in \Z$ we define normal order
$$\mbox{$\circ\atop\circ$}a(n_1)\cdots a(n_r)\mbox{$\circ\atop\circ$}=(-1)^{|\sigma|}a(n_{\sigma 1})\cdots a(n_{\sigma r})$$
where $\sigma\in S_r$ such that $n_{\sigma 1}\leq \cdots \leq n_{\sigma r}.$ Recall $a(z)=\sum_{n\in\Z}a(n)z^{-n-1/2}$ on  $V(H_\F,\Z).$ For $n\geq 0$ set $\partial_n=\frac{1}{n!}(\frac{d}{dz})^n.$
Let $u=a(-n_1-\ha)\cdots a(-n_r-\ha)\in V(H_\F,\ha+\Z)$ and define
$$\bar{Y}(a(-n_1-\ha)\cdots a(-n_r-\ha),z)=\mbox{$\circ\atop\circ$}(\partial_{n_1}a(z))\cdots (\partial_{n_r}a(z))\mbox{$\circ\atop\circ$}.$$
From \cite{FFR} we know that
$$Y(u,z)=\bar{Y}(e^{\Delta(z)}u,z)$$
on $V(H_\F,\Z)$ for $u\in V(H_\F,\ha+\Z).$

Since $C_{03}-C_{30}=2$ and $C_{12}-C_{21}=-1,$  we compute
$$\Delta(z)J=(\frac{1}{4}(C_{03}-C_{30})-\frac{3}{4}(C_{12}-C_{21}))z^{-4}=3z^{-4}.$$
Thus
$$o(J)1=3+{-1/2\choose 3}\mbox{$\circ\atop\circ$} a(0)^2 \mbox{$\circ\atop\circ$} -3{-1/2 \choose 2}(-\ha)\mbox{$\circ\atop\circ$}a(0)^2\mbox{$\circ\atop\circ$}=3,\ \ o(J)a(0)=3a(0),$$
as desired. \end{proof}

For short, let $V=V(H_\F,\ha+\Z)_{\bar 0}.$
Next we use  $A(V)$-theory developed in \cite{Z} and \cite{DR1} to prove that the irreducible $V$-modules given in Lemma \ref{3modules}  give a complete list of inequivalent irreducible $V$-modules.    For $a\in V$, we set $[a]=a+O(V).$  
The exact same proof of Theorem 3.5 of \cite{DN} gives
\begin{thm}\label{t5.1} The algebra $A(V)$ is spanned by
$\mathcal{S} = \{[\omega]^{*s}[J]^{*t}\,|\, s,t\geq 0\}$
where we omit the product sign $*$ for simplicity.
\end{thm}

We next discuss the various relations among $[\omega]^s,$ $[J]^t$ for $s,t\geq 1.$
\begin{lem}\label{l5.2} We have the following relations

(1) $[\omega]([\omega]-4)=0,$

(2) $[\omega][J]=4[J],$

(3) $[J]^2=[\omega]-3[J],$

(4) $[J]([J]-1)([J]-3)=0.$
\end{lem}
\begin{proof} (1) From the proof of Theorem 5.2 of \cite{DR2},  we know that $V$ is completely reducible module for $Vir_\F$ or $L(\ha,0)_\F.$ So $U(Vir_\F)\1=L(\ha,0)_\F$ and $O(L(\ha,0)_\F)$ is a subspace of $O(V).$
Also ${\bf u}=L(-2)^2-2L(-4))\1=0$ in $L(\ha,0)_\F$ by Lemma 5.1 of \cite{DR2}. Then
$$[\omega]^2=[L(-2)\omega]-2[\omega]=2[L(-4)\1]-2[\omega]=[L(-1)^2\omega]-2[\omega]=4[\omega],$$
equivalently
$$[\omega]([\omega]-4)=0.$$

(2) Using  (1b) in the proof of Theorem \ref{tspan},  we have
\begin{eqnarray*}
& &L(-2)J=L(-2)(a(-3-\ha)a(-\ha)-3a(-2-\ha)a(-1-\ha))\\
& &\ \ \ \ =a(-5-\ha)a(-\ha)+2a(-3-\ha)a(-2-\ha)\\
& &\ \ \ =3L(-1)a(-4-\ha)a(-\ha)+L(-1)a(-3-\ha)a(-1-\ha)
\end{eqnarray*}
which is equal to $-a(-4-\ha)a(-\ha)+2a(-3-\ha)a(-1-\ha)$ modulo $O(V).$ By (1a) in the proof of Theorem \ref{tspan},
$$-a(-4-\ha)a(-\ha)+2a(-3-\ha)a(-1-\ha)=(-2)L(-1)J$$
which is equal to $J$ modulo $O(V).$ Thus
$$[\omega][J]=[L(-2)J]-4[J]=-3[J]=4[J].$$

(3) From Theorem \ref{tspan}, $[J]^2$ is a linear combination of $[\omega]^i$ with $i=0,1,2,3, 4$ and $[\omega]^j[J]$ with $j=0,1,2.$ Using 1) and 2),  we see that
there are constants $a,b,c$ such that
$$[J]^2=a[\omega]+b[J]+c.$$
Applying this identity to $\1$ and noting that $o(\omega)\1=o(J)\1=0$ assert $c=0.$  Applying the identity to $a(-\ha)$ and $1\in V(H_\F,\Z)$ yields the linear system
$$1=4a+b, \ \ 2=4a+3b$$
which has a unique solution $a=1, b=4.$

(4) Using (2) and (3),  we have  $$[J]([J]-1)([J]-3)=[J]([J]^2-4[J]+3)=[J]([\omega]+3)=0,$$
as desired.
\end{proof}

By Theorem \ref{t5.1} and Lemma \ref{l5.2}, we immediately have
\begin{thm}\label{t5.2} The algebra $A(V)$ is isomorphic to $\F[x]/(x(x-1)(x-3))$ where the isomorphism is given by
$[J]^t\mapsto x^t$ for $t\geq 0.$ In particular, $A(V)$ is semisimple and  $V$ has exactly 3 inequivalent irreducible modules.
\end{thm}

The one to one correspondence 
between irreducible $V$-modules and irreducible $A(V)$-modules given in \cite{DR1}  gives
\begin{thm} For $p>2,$ $V(H_\F,\ha+\Z)_{\bar 0}$ has exactly 3 inequivalent irreducible admissible modules $V(H_\F,\ha+\Z)_{\bar i},$ $V(H_\F,\Z)_{\bar 0}$ for $i=0,1.$ Moreover, $V(H_\F,\Z)_{\bar 0}$ and $V(H_\F,\Z)_{\bar 1}$ are isomorphic $V(H_\F,\ha+\Z)_{\bar 0}$-modules.
\end{thm}

We conjecture  that $V(H_\F,\Z+\ha)_{\bar 0}$ is rational and the fusion rules are the same as before \cite{DR1} if $\ch \F=7.$   A positive answer to this conjecture will extend the theory of modular framed vertex operator algebra to any field
$\F$ with $\ch\F\ne 2.$  Determining the decomposition of $V(H_\F,Z)$ in to a direct sum of irreducible 
$L(\ha,0)_\F$-modules is another problem. According to \cite{DR2}, $V(H_\F,Z)$ is a completely reducible $L(\ha,0)_\F$-module. We hope the decomposition will lead to character formulas for 
$L(\ha,0)_\F$ and $L(\ha,\ha)_\F.$ A related problem is to determine the singular vectors of the Verma modules $V(\ha,0)_\F$ and $V(\ha,\ha)_\F.$ The desired  decompositions of  $L(\ha,0)_\F$-modules $V(H_\F,Z)$ should give us the degrees of the singular vectors. But it is better to find explicit expressions of singular vectors for the purpose of studying general modular vertex operator algebras $L(c_m,0)_\F$ for $m\geq 4$ where $c_m=1-\frac{6}{m(m+1)}.$

\section{Framed vertex operator algebras}
\setcounter{equation}{0}
In the last half of this paper,  we will define and study the modular vertex operator algebras based on the representation theory of $L(\ha,0)_\F$ in \cite{DR1,DR2}. We assume that $\F$ is an algebraically closed field and $\ch\, \F\neq 2,7$. 

 A simple vertex operator superalgebra
$V=\oplus_{n\geq 0}V_n$ over $\F$ is called a {\em framed vertex operator superalgebra\/} (FVOSA) if  there exist $\omega_i\in V$ for $i=1$,~$\ldots$,~$r$ such that
(i) each $\o_i$ generates a copy of the simple Virasoro vertex operator
algebra of central charge $\frac{1}{2}$ and
the component operators $L^i(n)$ of
$Y(\omega_i,z)=\sum_{n\in\Z}L^i(n)z^{-n-2}$
satisfy $[L^i(m),L^i(n)]=(m-n)L^i(m+n)+\frac{m^3-m}{24}\delta_{m+n,0};$
(ii) the $r$ Virasoro algebras are mutually commutative;
and (iii) $\omega=\omega_1+\cdots+\omega_{r}$.
The set $\{\o_1,\ldots,\o_r\}$ is called a {\em Virasoro frame} (VF).
A framed vertex operator algebra (FVOA) is defined in an obvious way. 

We remark that in the case $\F=\C$ the assumption $V=\oplus_{n\geq 0}V_n$ is unnecessary  \cite{DGH}.  

Let $T_r=L(\ha,0)_\F^{\otimes r}.$ Then the vertex operator subalgebra of $V$ generated by $\o_1,\ldots,\o_r$ is isomorphic to $T_r$ and is rational
by Lemma \ref{tensorrational} and Theorem \ref{harational}.  For $h_i\in\{0,\ha,\se\}$ with $i=1,...,r,$  we set 
$$L(h_1,\ldots,h_r)_\F=L(\ha,h_1)_\F\otimes\cdots \otimes L(\ha,h_r)_\F.$$
Then $L(h_1,\ldots,h_r)_\F$ is an irreducible $T_r$-module and every irreducible $T_r$-module is given in this way. Using the rationality of $T_r,$  
we decompose $V$ into a direct sum of irreducible $T_r$-modules:
$$V=\bigoplus_{h_i\in\{0,\frac{1}{2},\frac{1}{16}\}}
m_{h_1,\ldots, h_{r}}L(h_1,\ldots,h_{r})_\F$$
where the nonnegative integer
$m_{h_1,\ldots,h_r}$ is the multiplicity of $L(h_1,\ldots,h_r)_\F$ in $V$.
If $\F=\C,$  all the multiplicities are finite and $m_{h_1,\ldots,h_r}$
is at most $1$ if all $h_i$ are different from $\frac{1}{16}$ as the gradation of $V$ is given by the eigenvalues of $L(0)$ \cite{DMZ}, \cite{DGH}.  
If $\ch\F=p$ is positive, the $L(0)$ has only $p$ different eigenvalues.  Assume  $M,W$ are two irreducible $T_r$-submodules of $V$ isomorphic to $L(h_1,\ldots,h_r)_\F.$ Then there exist $s,t\geq0$ such that 
$M=\oplus_{m\geq 0}M_{h+sp+m}$ and $W=\oplus_{m\geq 0}W_{h+tp+m}$ where
$M_n=V_n\cap M$ and $h=\sum_ih_i.$  We cannot prove that $s=t=0$ from the definition. So it is not obvious that $m_{h_1,\ldots,h_r}$ is still finite.

For $d=(d_1,...,d_r)\in \Z_2^r$,    let 
$V^d$ be the sum of all irreducible submodules isomorphic to
$L(h_1,\ldots,h_r)_\F$ such that $h_i=\frac{1}{16}$ if and only if
$d_i= 1$. Let $D=D(V)=\{ d\in \Z_2^r\mid V^d\ne 0\}.$  Then
$$V=\bigoplus_{d\in D}V^d.$$

\begin{de}\label{wtc}
Fix a positive integer $r$ and a codeword $d=(d_1,...,d_r)\in \{0,1\}^r.$  Define the  support of $d$  as $supp(d)=\{i\mid d_i=1\}$. The integer $r$ is called the \emph{length} of the codeword $d$ and \emph{the weight} $|d|$ of $d$ is defined to be the cardinality of $supp(d).$ 
\end{de}

As in \cite{DGH}, we have the following lemma.
\begin{lem}\label{coded}  Let $V$ be a FVOSA. Then

(1)   $D$ is a triply even linear binary code, i.e., $|\alpha|\equiv 0\ \mod\,8$ for any $\alpha\in D$ and $D$ is a $\Z_2$-linear subspace of $\Z_2^r$;

(2) for any $d\in D,$ $V^0$ is a simple vertex operator superalgebra and each $V^d$ is an irreducible $V^0$-module. 
\end{lem}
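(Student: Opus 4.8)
The plan is to reproduce, over $\F$, the argument of \cite{DGH} (see also \cite{M2}) for the case $\F=\C$; the only real inputs are the $L(\ha,0)_\F$-fusion rules of Theorem~\ref{harational} together with Lemma~\ref{tensorfusion}. First I would prove $Y(V^c,z)V^d\subseteq V^{c+d}((z))$ for $c,d\in D$: for irreducible $T_r$-submodules $M\cong L(h_1,\dots,h_r)_\F$ of $V^c$, $M'\cong L(h'_1,\dots,h'_r)_\F$ of $V^d$ and $W\cong L(g_1,\dots,g_r)_\F$ of $V$, the composite $\pi_W\circ Y(\cdot,z)|_{M\times M'}$ is an intertwining operator of type $\binom{W}{M\ M'}$, hence vanishes unless $N^{W}_{M\,M'}=\prod_iN^{L(\ha,g_i)_\F}_{L(\ha,h_i)_\F\,L(\ha,h'_i)_\F}\neq0$ by Lemma~\ref{tensorfusion}. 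By Theorem~\ref{harational}, $g_i=\se$ exactly when precisely one of $h_i,h'_i$ equals $\se$, i.e. $g_i=\se\iff c_i+d_i=1$, so $W\subseteq V^{c+d}$; thus every $T_r$-isotypic constituent of $Y(u,z)v$ lies in $V^{c+d}$.

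\emph{Linearity of $D$ and part (2).} Fix $c\in D$ and $0\neq u\in V^c$. Since $V$ is simple, the $V$-submodule generated by $u$, namely $\mathrm{span}_\F\{v_mu\mid v\in V,\ m\in\Z\}$, is all of $V$; decomposing each $v$ along the $D$-grading $V=\bigoplus_eV^e$ and using the first paragraph, the component in $V^d$ gives $V^d=\mathrm{span}\{v_mu\mid v\in V^{d+c},\ m\in\Z\}$ for each $d\in D$, so $V^{d+c}\neq0$, i.e. $d+c\in D$. As $0\in D$ and $D$ is finite, $D$ is a subgroup of $\Z_2^r$, i.e. a linear binary code. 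For part (2), note $\1$ and $\o=\sum_i\o_i$ lie in $V^0$, so $V^0$ is a sub-VOSA; the rest is the standard argument for a simple VOSA graded by a finite group. If $0\neq I\subsetneq V^0$ is an ideal of $V^0$, then $\bigoplus_dV^dI$ is a $V$-submodule of $V$ whose component in $V^0$ is $I\neq V^0$, hence a nonzero proper ideal of $V$ — impossible. If $0\neq W\subseteq V^d$ is a $V^0$-submodule, then $\bigoplus_eV^eW=V$ by simplicity, whose component in $V^d$ is $V^0W=W$ on the one hand and $V^d$ on the other, forcing $W=V^d$.

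\emph{Triple evenness.} Over $\C$ one argues directly: $V^d$ sits in conformal weights $\tfrac{\wt(d)}{16}+\ha\Z$ while $V$ is $\ha\Z$-graded, so $\wt(d)\equiv0\ \mod\,8$. Over $\F$ with $\ch\F=p$ this collapses — the $\ha\Z$-grading of $V$ need not agree with the $L(0)$-eigenvalue grading, and every coset of $\Z$ becomes trivial in $\F_p$ — so I would argue instead as follows. Picking an irreducible $T_r$-submodule $M\cong L(h_1,\dots,h_r)_\F$ of $V^d$, simplicity of $V$ shows the $V$-submodule generated by $M$ is $V$, and its component in $V^0$ is $\mathrm{span}\{v_mu\mid v\in V^d,\ u\in M,\ m\in\Z\}$, which therefore contains $\1$. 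Decomposing the $v$'s into irreducible $T_r$-submodules, some irreducible $T_r$-submodule $M'\cong L(h'_1,\dots,h'_r)_\F$ of $V^d$ must then satisfy $N^{L(0,\dots,0)_\F}_{M\,M'}\neq0$. By Theorem~\ref{harational} this forces $h_i=h'_i$ whenever $d_i=0$ (for $h_i,h'_i\in\{0,\ha\}$ the value $0$ occurs in $L(\ha,h_i)_\F\times L(\ha,h'_i)_\F$ only when $h_i=h'_i$), while $h_i=h'_i=\se$ for $d_i=1$. Now $\pi_{L(0,\dots,0)_\F}\circ Y(\cdot,z)|_{M\times M'}$ is a nonzero intertwining operator of type $\binom{L(0,\dots,0)_\F}{M\ M'}$; its $z$-exponents, being those of $Y$, lie in $\Z$, but by the defining form of an intertwining operator they also lie in the coset $-\lambda_M-\lambda_{M'}+\lambda_{L(0,\dots,0)_\F}+\Z$, where $\lambda_M=\sum_ih_i$, $\lambda_{M'}=\sum_ih'_i$ and $\lambda_{L(0,\dots,0)_\F}=0$ are the natural rational representatives. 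These being genuine rational exponents, non-vanishing forces $\sum_i(h_i+h'_i)\in\Z$; since $h_i+h'_i=\tfrac18$ at the $\wt(d)$ indices with $d_i=1$ and $h_i+h'_i=2h_i\in\{0,1\}$ otherwise, this reads $\tfrac{\wt(d)}{8}\in\Z$, i.e. $\wt(d)\equiv0\ \mod\,8$.

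The step I expect to be the genuine obstacle is the end of the triple-evenness argument: over $\F$ one must carry the exponents of $z$ in intertwining operators as honest rational numbers (this is exactly why $\BQ(p)$ enters the definitions of Section~2) and check that projecting the vertex operator of $V$ onto a $T_r$-isotypic component really produces an intertwining operator in that sense, so that the ``fractional-part'' constraint $-\lambda_M-\lambda_{M'}+\lambda_W\in\Z$ still has content even though $\Z$ and its cosets are indistinguishable inside $\F_p$. By contrast, the first two paragraphs are purely formal consequences of the fusion rules and of the simplicity of $V$, essentially verbatim as over $\C$.
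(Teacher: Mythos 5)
Your first two paragraphs are sound and are in the spirit of the paper's (cited) argument: the containment $Y(V^c,z)V^d\subseteq V^{c+d}((z))$ from Theorem~\ref{harational} and Lemma~\ref{tensorfusion}, and then linearity of $D$, simplicity of $V^0$ and irreducibility of each $V^d$ from simplicity of $V$ together with the standard fact that the ideal generated by a subset $S$ is $\mathrm{span}\{v_mS\}$. The problem is the triple-evenness step, and it sits exactly where you yourself flag the obstacle.

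Over $\F$ the lifts $\lambda_i\in\BQ(p)$ in the definition of an intertwining operator are only required to satisfy $L(0)|_{M_i(n)}=\lambda_i+n$ \emph{as an identity in} $\F$; they are determined only modulo $\ker(\BQ(p)\to\F)$, which contains $\frac{p}{16}\Z$, and the paper explicitly notes that the theory does not depend on the choice. So nothing entitles you to use the ``natural'' representatives $\lambda_M=\sum_ih_i$, $\lambda_{M'}=\sum_ih'_i$, $\lambda_W=0$ for the operator $\pi_W\circ Y|_{M\times M'}$. That operator \emph{is} an intertwining operator, but with respect to the lifts given by the bottom degrees $\delta_M,\delta_{M'},\delta_W\in\frac{1}{2}\Z$ of $M,M',W$ in the $\frac12\Z$-grading of $V$: condition (3) and the integrality of the exponents of $Y$ are then satisfied because $\delta_M+\delta_{M'}-\delta_W\in\Z$ (the two modules have equal parity, else the projection vanishes), and with these lifts the coset condition is vacuous. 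To run your argument with the natural lifts you would need $\delta_M-\sum_ih_i\in\Z$, i.e.\ that the degrees of $V^d$ lie in $\frac{\wt(d)}{16}+\frac12\Z$ --- but that is precisely the weight/degree comparison which is an axiom over $\C$ (degree $=$ $L(0)$-eigenvalue) and is exactly what one is trying to prove over $\F$; the argument is circular. All that honestly follows from comparing the two choices of lifts is the congruence $\sum_i(h_i+h'_i)\equiv\delta_M+\delta_{M'} \pmod{\ker(\BQ(p)\to\F)}$, i.e.\ $\frac{\wt(d)}{8}\in\Z+\frac{p}{8}\Z$, which is no restriction at all since $p$ is odd and invertible mod $8$. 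So the ``fractional-part'' constraint you invoke has no content in this form, and part (1) of the lemma (divisibility of $\wt(d)$ by $8$) is not established by your proposal; some genuinely different input beyond the fusion rules and the mod-$p$ eigenvalue data is required at this point.
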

 
The proof of Lemma \ref{coded} is similar to the same result in \cite{DGH} by using Lemma \ref{dl} and Theorem \ref{harational}.

For each  $c=(c_1,...,c_r)\in \Z_2^r,$ let $V(c)$ be the sum of the irreducible submodules
isomorphic to $L(\frac{1}{2}c_1,\ldots,\frac{1}{2}c_{r})_\F$.
Then $V^0=\bigoplus_{c\in \Z_2^r}V(c)$. Let $C=C(V)=\{c\in \Z_2^r\mid V(c)\ne 0\}$. 
The  following result  is an immediate consequence of  Theorem \ref{harational} and Lemma \ref{tensorfusion}.
\begin{lem}\label{codeC}  The set $ C$ is a linear binary code. Moreover,  $C$ is even if and only if $V^0$ is a vertex operator algebra.
\end{lem}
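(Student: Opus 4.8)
The plan is to mimic the proof of the analogous statement in \cite{DGH}, translating each step to the modular setting by invoking the results already established in the excerpt. The two assertions to prove are: (a) $C$ is a linear binary code, i.e.\ closed under addition; and (b) $C$ is even (all codewords have weight $\equiv 0 \bmod 2$) if and only if $V^0$ is a vertex operator algebra, i.e.\ $V^0_{\bar 1}=0$.

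\smallskip

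\textbf{Linearity of $C$.} First I would observe that $0 \in C$ since $V(0) \supseteq L(0,\ldots,0)_\F = T_r \ne 0$. For closure under addition, suppose $c, c' \in C$, so $V(c) \ne 0$ and $V(c') \ne 0$. Since $V^0$ is a simple vertex operator superalgebra (Lemma~\ref{coded}(2) with $d=0$), the vertex operator $Y(\cdot,z)$ restricted to $V(c) \times V(c')$ is a nonzero intertwining operator of type $\binom{V^0}{V(c)\ V(c')}$ for the rational vertex operator algebra $T_r$; nonvanishing follows from simplicity of $V^0$ together with Lemma~\ref{dl} (an intertwining operator between irreducibles that kills a single nonzero pair of vectors is zero, so if $Y$ vanished on all of $V(c)\times V(c')$ then $V(c)\cdot V(c')=0$, contradicting that $V^0$ is generated by any nonzero element under products since it is simple). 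Hence, projecting onto a summand, there is an irreducible $T_r$-submodule $L(h_1,\ldots,h_r)_\F$ of $V^0$ with $N^{L(h_1,\ldots,h_r)_\F}_{L(\frac{c_1}{2},\ldots)\ L(\frac{c_1'}{2},\ldots)} \ne 0$. By Lemma~\ref{tensorfusion} this fusion rule factors as $\prod_i N^{L(\frac12,h_i)_\F}_{L(\frac12,\frac{c_i}{2})_\F\ L(\frac12,\frac{c_i'}{2})_\F}$, and by the $L(\tfrac12,0)_\F$ fusion rules in Theorem~\ref{harational} this forces $h_i \in \{0,\tfrac12\}$ with $\tfrac{2h_i}{1}$ determined: indeed $L(\tfrac12,\tfrac{c_i}{2})_\F \times L(\tfrac12,\tfrac{c_i'}{2})_\F = L(\tfrac12,\tfrac{c_i+c_i'}{2})_\F$ where $c_i + c_i'$ is taken mod $2$. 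Thus the only irreducible $T_r$-module appearing is $L(\frac{c_1+c_1'}{2},\ldots,\frac{c_r+c_r'}{2})_\F$, which therefore lies in $V^0$, showing $c + c' \in C$. This also shows $V(c)\cdot V(c') \subseteq V(c+c')$, which will be reused below.

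\smallskip

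\textbf{Evenness criterion.} Recall $V = V_{\bar 0} \oplus V_{\bar 1}$ with $V_{\bar 1} = \sum_{n \in \frac12 + \Z} V_n$, and each irreducible $T_r$-submodule $L(h_1,\ldots,h_r)_\F$ sits in $V_{\bar 0}$ or $V_{\bar 1}$ according to the parity of its grading. For a summand in $V^0$ all $h_i \in \{0,\tfrac12\}$, so its top weight is $h = \sum_i h_i = \frac12\,\mathrm{wt}(c) \in \frac12 \Z$ (here $\mathrm{wt}(c) = \#\{i : c_i = 1\}$), and $V(c) \subseteq V_{\bar 0}$ iff $\mathrm{wt}(c)$ is even, $V(c) \subseteq V_{\bar 1}$ iff $\mathrm{wt}(c)$ is odd. (One subtlety in characteristic $p$: the $\Z$-grading of $V$ need not match $L(0)$-eigenvalues, but the $\Z_2$-grading $\bar 0/\bar 1$ is intrinsic, and an irreducible $T_r$-module generated in a fixed degree $n_0$ lies wholly in $V_{n_0 + \Z}$, so its parity is well-defined and equals that of $n_0$; since $V^0$ contains $T_r$ in degree $0$, and $V(c)\cdot V(c) \subseteq V(0) \subseteq V_{\bar 0}$ forces a copy of $T_r$-type module of parity $2\,\mathrm{wt}(c)\bmod 2 = 0$ inside $V^0$, consistency of the parity assignment follows.) Therefore $V^0_{\bar 1} = \bigoplus_{c \in C,\ \mathrm{wt}(c) \text{ odd}} V(c)$, which vanishes iff $C$ is even. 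Combined with the definition in the excerpt that $V^0$ is a vertex operator algebra precisely when $V^0_{\bar 1} = 0$, this gives the stated equivalence.

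\smallskip

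\textbf{Main obstacle.} The genuine difficulty, as the paper itself flags just above, is that in characteristic $p$ the $\Z$-gradation of $V$ is not given by $L(0)$-eigenvalues, so identifying in which degree a given irreducible $T_r$-submodule $L(h_1,\ldots,h_r)_\F$ actually sits—and hence its $\Z_2$-parity—is not immediate from the weights $h_i$. I expect to handle this by using the fusion/product structure: the pairing $V(c) \times V(c) \to V^0$ via $Y$ lands in a submodule isomorphic to $L(0,\ldots,0)_\F = T_r$, and matching that copy of $T_r$ against the canonical $T_r \subseteq V^0$ in degree $0$ pins down the parity, with the general associativity and commutativity (available here by the remarks preceding Lemma~\ref{dl}) ensuring the assignment $c \mapsto (\text{parity of } V(c))$ is the homomorphism $c \mapsto \mathrm{wt}(c) \bmod 2$. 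Everything else is a direct transcription of the $\C$-case argument from \cite{DGH}, legitimized by Theorem~\ref{harational}, Lemma~\ref{dl}, and Lemma~\ref{tensorfusion}.
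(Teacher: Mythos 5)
Your first half (linearity of $C$) is essentially the paper's argument: the paper proves the lemma simply by citing Theorem \ref{harational} and Lemma \ref{tensorfusion}, and your combination of the simplicity of $V^0$ (Lemma \ref{coded}), Lemma \ref{dl} to get nonvanishing of the product $V(c)\cdot V(c')$, and the fusion rules to force that product into the isotypic component of $L(\frac{c_1+c_1'}{2},\ldots,\frac{c_r+c_r'}{2})_\F$ is exactly the intended elaboration; that part is fine.

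The gap is in the evenness half, at the very point you label the main obstacle, and the patch you propose does not close it. Parities add under the vertex operations (the axiom $u_nv\in V_{s+t-n-1}$), so $V(c)\cdot V(c)\subset V_{\bar 0}$ holds automatically no matter what the parity of $V(c)$ is; comparing the resulting type-$(0,\ldots,0)$ module with the canonical $T_r$ in degree $0$ therefore gives no information about the parity of $V(c)$, and the inclusion $V(0)\subset V_{\bar 0}$ that you invoke is itself an instance of the statement to be proved. The only link between the abstract weights $h_i$ and the $\frac{1}{2}\Z$-grading of $V$ is that $L(0)$ acts on $V_n$ by the image of $n$ in $\F$; since the kernel of $\frac{1}{2}\Z\to\F$ is $\frac{p}{2}\Z$, this pins the lowest degree $n_0$ of an irreducible $T_r$-submodule of type $(\frac{c_1}{2},\ldots,\frac{c_r}{2})$ down only modulo $\frac{p}{2}\Z$, so a priori $n_0$ could differ from $\frac{1}{2}\wt(c)$ by an odd multiple of $\frac{p}{2}$ and flip the parity (for instance, a type-$(0,\ldots,0)$ lowest-weight vector sitting in degree $\frac{p}{2}$ has $L(0)$-eigenvalue $0$ in $\F$, so $L(0)$ cannot detect it). What the paper actually relies on is the assertion, made in the discussion just before the definition of $V^d$, that such a submodule has degrees in $h+sp+\Z_{\geq 0}$ with $s$ a nonnegative integer, i.e., that the offset is an integer multiple of $p$; your proof needs either to quote that statement or to rule out the half-integer offset by a genuine argument (for example along the lines of the associativity manipulation used in the proof of Lemma \ref{l2.2}), and the square trick cannot substitute for it.
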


Next  result tells us that each $V(c)$ is an irreducible $T_r$-module.
\begin{lem}\label{l2.2}  Let $V$ be a FVOA over $\F$ with $\ch\F=p>0$, 
$p\neq 2,7$.  For any $c\in C,$ $V(c)=L(\frac{1}{2}c_1,\ldots,\frac{1}{2}c_{r})_\F.$
In particular,  the degree zero subspace $V_0$ of  $V$ is one dimensional: $V_0=\F\1$.
\end{lem}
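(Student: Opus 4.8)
The plan is to show that the multiplicity $m_{\frac{1}{2}c_1,\ldots,\frac{1}{2}c_r}$ of $L(\frac{1}{2}c_1,\ldots,\frac{1}{2}c_r)_\F$ inside $V^0$ equals $1$ for every $c\in C$, so that $V(c)$ is a single irreducible $T_r$-module. The starting point is $c=0$: the component $V(0)$ is a sum of copies of $L(0,\ldots,0)_\F = T_r$ itself. Since $V^0$ is a simple vertex operator (super)algebra containing $T_r$ as a subalgebra with the same conformal vector $\omega$, the vacuum $\1$ generates one copy of $T_r$; any other copy $W\cong T_r$ inside $V(0)$ contains a nonzero vector $u$ of degree $0$ in $V$ (the image of the vacuum of $W$) with $L^i(0)u = 0$ for all $i$ and $L^i(n)u=0$ for $n>0$. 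The key observation is that such a $u$ lies in the commutant/fixed space and, being annihilated by all positive modes of $\omega$ and having $L(0)u=0$, must be proportional to $\1$ by simplicity of $V^0$ — more precisely, $u_{-1}$ acts as a nonzero scalar on $\1$ if $u_{-1}\1\ne 0$, and one runs the standard argument that a degree-zero vacuum-like vector in a simple VOSA is a scalar multiple of $\1$. This pins down $V_0=\F\1$, and simultaneously forces the ``$s=t=0$'' phenomenon for the $c=0$ component.

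For general $c\in C$ the argument is bootstrapped from the $c=0$ case using the fusion rules of Theorem \ref{harational} together with Lemma \ref{tensorfusion}, exactly as in \cite{DGH}. Pick a nonzero irreducible submodule $M\subset V(c)$, $M\cong L(\frac{1}{2}c_1,\ldots,\frac{1}{2}c_r)_\F$. If $V(c)$ contained a second copy $W$, then since each $L(\frac{1}{2},h)_\F$ with $h\in\{0,\frac12\}$ is a simple current (its fusion with itself is $L(\frac12,0)_\F$), the component-wise fusion gives $L(\frac{1}{2}c_1,\ldots,\frac{1}{2}c_r)_\F \boxtimes L(\frac{1}{2}c_1,\ldots,\frac{1}{2}c_r)_\F = T_r$ with multiplicity one. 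Consider the $T_r$-intertwining operators obtained by restricting $Y$ of $V^0$ to $M\times W$; these land in $V(0)$-isotypic part, which by the first paragraph is a single copy of $T_r$ generated by $\1$. One then shows that if $M\ne W$ inside $V^0$ one can build two linearly independent intertwining operators of type $\binom{T_r}{M\ W}$ (one from the product with $M$, one from the product with $W$), contradicting $N^{T_r}_{MM}=1$; this is the standard simple-current rigidity argument. Hence $V(c)=M$ is irreducible, proving the first claim. The statement $V_0=\F\1$ then follows by taking $c=0$.

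The main obstacle I anticipate is the positive-characteristic subtlety flagged in the text right before the lemma: over $\F$ with $\ch\F=p$, the $\Z$-grading of $V$ is no longer recovered from the eigenvalues of $L(0)$, so a copy $W\cong L(\frac{1}{2}c_1,\ldots,\frac{1}{2}c_r)_\F$ sitting in $V$ might be shifted, i.e. $W=\oplus_{m\ge 0}W_{h+tp+m}$ for some $t>0$. The fusion/intertwining-operator argument must therefore be carried out at the level of the $\Z$-graded modules (using the grading-preserving intertwining operators of Section 2, condition (3) in the definition of intertwining operator), not merely the $L(0)$-graded ones, so that the rigidity of simple currents is applied correctly and the possible grading shift is controlled. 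Concretely, one checks that the restriction of $Y$ to $M$ gives a nonzero intertwining operator sending $M\otimes W$ into the degree range of an \emph{unshifted} $T_r$, which forces $t=0$ and then the one-dimensionality of the relevant $\Hom$-space closes the argument. Everything else — the explicit fusion computation, the construction of the two intertwiners, and the simplicity argument for $V_0=\F\1$ — is routine given Theorem \ref{harational}, Lemma \ref{dl}, and Lemma \ref{tensorfusion}.
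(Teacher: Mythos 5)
Your overall reduction (prove that $V(0)$ is a single copy of $T_r$, equivalently $V_0=\F\1$, and then get general $c\in C$ from irreducibility over $V(0)$ together with the fusion rules) has the same shape as the paper's argument, but the core step is not actually proved. In your first paragraph you assert that a second copy $W\cong T_r$ inside $V(0)$ ``contains a nonzero vector $u$ of degree $0$ in $V$''; in characteristic $p$ this is exactly what cannot be assumed, since $L(0)$ determines the degree only modulo $p$ and the vacuum of $W$ is a priori only known to sit in degree $tp$ for some $t\geq 0$. The ``standard argument that a degree-zero vacuum-like vector in a simple VOSA is a multiple of $\1$'' is likewise a characteristic-zero argument: it rests on $Y(L(-1)u,z)=\frac{d}{dz}Y(u,z)=0$ forcing $Y(u,z)=u_{-1}$, which fails mod $p$ because $\frac{d}{dz}z^{-np}=0$. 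Your proposed repair in the last paragraph, that condition (3) in the definition of intertwining operators ``forces $t=0$,'' does not work either: the degree condition is compatible with arbitrary shifts $s,t$, and the product $M\cdot W$ may land in a shifted copy of $T_r$ rather than the frame copy generated by $\1$, so degree bookkeeping alone never pins down the absolute grading. Since your argument for general $c$ explicitly invokes the (unproved) $c=0$ case, the whole proposal hinges on this gap.

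The paper closes the gap by a different mechanism, which is the idea missing from your plan. It passes to the full multiplicity space $U=\{v\in V\mid L^i(n)v=0,\ i=1,\ldots,r,\ n\geq -1\}$, shows $U$ is a simple vertex algebra graded by $p\Z_{\geq 0}$, uses $N^{T_r}_{T_r\,T_r}=1$ (Theorem \ref{harational} and Lemma \ref{tensorfusion}) to see that on $U$ each $Y(u,z)v$ is a single monomial $u_{-1+sp}v\,z^{-sp}$, and then applies the associativity identity $(z_0+z_2)^qY(u,z_0+z_2)Y(v,z_2)w=(z_0+z_2)^qY(Y(u,z_0)v,z_2)w$ to force $s=0$, i.e. $Y(u,z)=u_{-1}$ on $U$. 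This turns $U$ into a $\Z_{\geq 0}$-graded simple commutative associative algebra, whose positive part is an ideal and hence zero, so $U=U_0$ is a finite field extension of the algebraically closed field $\F$ and therefore $U=\F\1$; then $V(0)=T_r$ and each $V(c)$, being an irreducible $V(0)$-module, equals $L(\frac{1}{2}c_1,\ldots,\frac{1}{2}c_r)_\F$. Without the associativity step (or some substitute that actually kills the $z^{-sp}$ shifts) and the ideal argument excluding components of $U$ in positive degrees, the characteristic-$p$ obstruction you correctly identified remains unresolved.
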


\begin{proof}  Note that $V^0=\oplus_{c\in C}V(c).$ Using a  proof similar to that of  Lemma \ref{coded},  we know
 that $V(0)$ is a simple vertex operator algebra and each $V(c)$ is an irreducible $V(0)$-module.  Let $U$ consist of vectors $v\in V$ such that $L^i(n)v=0$ for $i=1,...,r$ and $n\geq -1.$ Then $U$ is the multiplicity of $T_r$ in $V$ and $U=\oplus_{m\geq 0}U_m$ is graded where $U_n=U\cap V_n.$  Moreover,  $V(0)=T_r\otimes U.$  Clearly, $V_0=U_0.$ The key point is to prove that $U=\F\1=V_0.$ 

We claim that $U$ is a vertex algebra. This is clear by noting that $L^i(n)u_sv=0$ for $u,v\in U,$  $s,n\in \Z$ with $n\geq	-1.$
 Moreover, $U$ is a simple vertex algebra as $V(0)$ is.  Also $U=\oplus_{i\in \Z}U_{ip}$ as $L(0)$ acts trivially on $U.$ 

 Fix $0\ne u\in U_{mp}, 0\ne v\in U_{np},$ and denote the irreducible $T_r$-modules generated by $u,v$ by $M$ and $N,$ respectively. Then $\<a_sb|a\in M, b\in N, s\in\Z\>$ is also an irreducible $T_r$-module $W.$ It is obvious that $M,N,W$ are isomorphic to $T_r$ as $T_r$-modules and  $Y(a,z)b$ for $a\in M$ and $b\in N$ is an intertwining operator of type $\left(\begin{array}{c}
T_r\\
T_r \   T_r
\end{array}\right).$    By Lemma \ref{tensorfusion} and Theorem \ref{harational},  we know that $N_{T_r, T_r}^{T_r}=1.$ 

From the definition, we know that $\1_{n}=\delta_{n,-1}Id_V.$ Thus $Y(u,z)|_N=u_{-1+sp}z^{-sp}$ for some $s\in\Z.$ In particular, $Y(u,z)v=u_{-1+sp}vz^{-sp}$ is nonzero. Take $w\in U_{ip}.$ Then from the associativity \cite{DR1}, there exists positive number $q\in\Z$ such that
$$(z_0+z_2)^qY(u,z_0+z_2)Y(v,z_2)w=(z_0+z_2)^qY(Y(u,z_0)v,z_2)w.$$
From the discussion above, $Y(u,z_0+z_2)Y(v,z_2)w=u_{-1+jp}v_{-1+lp}w(z_0+z_2)^{-jp}z_2^{-lp}$ and $Y(Y(u,z_0)v,z_2)w=(u_{-1+sp}v)_{-1+tp}wz_0^{-sp}z_2^{-tp}$ for some integers $j,l,s,t.$  As a result, we have 
$$u_{-1+jp}v_{-1+lp}w(z_0+z_2)^{-jp}z_2^{-lp}=(u_{-1+sp}v)_{-1+tp}wz_0^{-sp}z_2^{-tp}.$$
This forces $j=s=0$ and $t=l.$ Since $v\in U$ is arbitrary,  we see that $Y(u,z)=u_{-1}$ on $U$ for any
$u\in U.$ This implies that 
$$u_{-1}v_{-1}w=(u_{-1}v)_{-1}w.$$
That is, $U$ is an associative algebra over $\F$ with product $u\cdot v=u_{-1}v.$ 

Using the commutativity 
$$(z_1-z_2)^qY(u,z_1)Y(v,z_2)w=(z_1-z_2)^qY(v,z_2)Y(u,z_1)w$$
for some $q\in \Z,$ we see that $u_{-1}v_{-1}w=v_{-1}u_{-1}w.$ In particular, if $w=\1$ we have $u_{-1}v=v_{-1}u$ and $U$ is a simple,  commutative associative algebra. For $u\in U_{pm}, v\in U_{np},$ $u_{-1}v\in U_{p(m+n)}.$ 
So $U=\oplus_{n\geq 0}U_{np}$ is a  $\Z$-graded algebra and $I=\oplus_{n>0}U_{np}$ is an ideal of $U.$ From the simplicity of $U$ we conclude that $I=0$ and $U=U_0$ is a finite dimensional simple commutative associative algebra over $\F.$ This implies that $U$ is a finite field extension of $\F.$ Since $\F$ is algebraically closed,
$U=\F=\F\1.$ 

Finally, each $V(c)$ is an irreducible $V(0)$-module. So $V(c)=L(\ha c_1,...,\ha c_r)_\F.$
 \end{proof}

We remark that Lemma \ref{l2.2} was obtained in \cite{DMZ} and \cite{DGH} in the case $\F=\C.$ But the proof here is much more complicated as we cannot use the unitarity of the modules for the Virasoro algebra with central  charge $\ha$ in the current situation.

Next we deal with the multiplicities $m_{h_1,\ldots,h_r}$ in general. The same result was given in Proposition 2.5 \cite{DGH} when $\F=\C.$ The proof in \cite{DGH} works here.
\begin{lem}\label{l2.3} Let $V$ be a FVOA. Let $d\in D$ and suppose
that $(h_1,\ldots,h_r)$ and $(h_1',\ldots,h'_r)$ are $r$-tuples
with $h_i$, $h_i'\in\{0,\ha,\se\}$ such that $h_i={1 \over 16}$
(resp.~$h_i'={1 \over 16}$) if and only if $d_i=1$.
If both $m_{h_1,\ldots,h_r}$ and $m_{h_1',\ldots,h'_r}$  are nonzero
then $m_{h_1,\ldots,h_r}=m_{h_1',\ldots,h'_r}$.
That is, all
irreducible modules inside $V^d$ for $T_r$ have the same multiplicities.
\end{lem}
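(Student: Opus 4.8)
The plan is to follow the strategy of Proposition 2.5 in \cite{DGH}, using intertwining operators to transport multiplicity information between the different irreducible $T_r$-submodules appearing inside a fixed $V^d$. The key observation is that any two tuples $(h_1,\ldots,h_r)$ and $(h_1',\ldots,h_r')$ with the same ``support'' $d$ (i.e. $h_i=\frac{1}{16}$ exactly when $d_i=1$) differ only in the coordinates where $d_i=0$, and there $h_i,h_i'\in\{0,\frac12\}$. So the two tuples differ by an even binary codeword $c\in C$: namely $h_i'=h_i$ or $h_i'=\frac12-h_i$ according to whether $c_i=0$ or $c_i=1$ (and $c_i=0$ whenever $d_i=1$). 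Since $V(c)\subset V^0$ is, by Lemma \ref{l2.2}, the single irreducible $T_r$-module $L(\frac{c_1}{2},\ldots,\frac{c_r}{2})_\F$, fusing with $V(c)$ should shift $L(h_1,\ldots,h_r)_\F$ to $L(h_1',\ldots,h_r')_\F$.

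First I would fix a nonzero element $a\in V(c)$, so that $V(c)=\langle T_r\cdot a\rangle$. For any irreducible $T_r$-submodule $M\cong L(h_1,\ldots,h_r)_\F$ of $V^d$, the vertex operator $Y(\cdot,z)$ restricted to $V(c)\otimes M$ is an intertwining operator of type $\binom{W}{V(c)\; M}$ where $W=\langle a_n M\mid n\in\Z\rangle$ is the $T_r$-submodule of $V$ generated. By Lemma \ref{tensorfusion} and Theorem \ref{harational}, the only possible irreducible constituent of such a $W$ is $L(h_1',\ldots,h_r')_\F$, since coordinatewise the fusion of $L(\frac12,0)$ or $L(\frac12,\frac12)$ with $L(\frac12,h_i)$ is a single irreducible module, namely $L(\frac12,h_i)$ or $L(\frac12,\frac12-h_i)$ respectively (in particular the $\frac1{16}$-slots are unchanged because $c$ is supported off $d$). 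Moreover $Y(\cdot,z)|_{V(c)\otimes M}\ne 0$ by simplicity of $V$ (or by Lemma \ref{dl}: if it vanished on some nonzero pair it would vanish identically, contradicting that $\1$ is generated). Hence $W\ne 0$ and $W\cong L(h_1',\ldots,h_r')_\F$, so every irreducible summand $M$ of $V^d$ of type $(h_1,\ldots,h_r)$ produces one of type $(h_1',\ldots,h_r')$ in $V$; since all such lie in $V^d$, this gives $m_{h_1,\ldots,h_r}\le m_{h_1',\ldots,h_r'}$.

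The reverse inequality is symmetric: applying the same argument with $a$ replaced by a nonzero element of $V(c)$ and interchanging the roles, or rather using that $c$ is an involution on these tuples (fusing twice with $V(c)$ returns to the original type, since $L(\frac12,c_i/2)\times(L(\frac12,c_i/2)\times L(\frac12,h_i))=L(\frac12,h_i)$), we get $m_{h_1',\ldots,h_r'}\le m_{h_1,\ldots,h_r}$. To make this rigorous I would check that the composite map $M\mapsto W\mapsto \langle T_r\cdot a_{n'}(a_n M)\rangle$ is, up to the explicit fusion isomorphisms, the identity on the isotypic component, so the two transport maps are mutually inverse linear isomorphisms between the multiplicity spaces $\mathrm{Hom}_{T_r}(L(h_1,\ldots,h_r)_\F,V)$ and $\mathrm{Hom}_{T_r}(L(h_1',\ldots,h_r')_\F,V)$. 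Since both multiplicities are assumed nonzero (so the modules genuinely occur and the transported modules are not killed), we conclude $m_{h_1,\ldots,h_r}=m_{h_1',\ldots,h_r'}$.

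The main obstacle I anticipate is bookkeeping rather than conceptual: one must be careful that in the modular setting the $\Z$-gradation of $V$ is not the $L(0)$-eigenvalue gradation, so ``isomorphic as $T_r$-modules'' for submodules of $V$ allows a shift as noted after Lemma \ref{l2.2}; one should phrase everything in terms of abstract $T_r$-module isomorphism type and multiplicity spaces, and verify that the intertwining-operator construction respects this without needing the weight grading. The nonvanishing of $Y(\cdot,z)$ on $V(c)\otimes M$, and the fact that the transport does not annihilate a whole isotypic component, are exactly where the simplicity of $V$ and Lemma \ref{dl} are used; once those are in place the rest is the $\C$-argument of \cite{DGH} verbatim.
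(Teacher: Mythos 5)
There is a genuine gap at the very first step: you define $c$ by $c_i=1$ exactly when $h_i\ne h_i'$ (so $c$ is supported off $supp(d)$) and then simply assert that $c\in C$, so that $V(c)\ne 0$ and Lemma \ref{l2.2} gives you a nonzero $a\in V(c)$ to fuse with. But $C=C(V)=\{c\mid V(c)\ne 0\}$ is defined by what actually occurs in $V^0$, and nothing you have said forces the difference codeword to occur; if $V(c)=0$ your transport map does not exist and the whole argument collapses. This is precisely the point where the hypothesis that \emph{both} multiplicities are nonzero must be used structurally, and in your write-up that hypothesis only appears at the end, where it is no longer doing any work. (Indeed, without producing a suitable codeword the statement would be false in the form ``$m_{h}\ne 0$ implies $m_{h'}\ne 0$''.)

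The missing step, which is the heart of the argument in Proposition 2.5 of \cite{DGH} that the paper invokes, is to first fuse the two isotypic pieces of $V^d$ with each other: take irreducible $T_r$-submodules $M\cong L(h_1,\ldots,h_r)_\F$ and $N\cong L(h_1',\ldots,h_r')_\F$ inside $V^d$ (this uses both multiplicities being nonzero), and consider $Y(u,z)v$ for $u\in M$, $v\in N$. By simplicity of $V$ and Lemma \ref{dl} this is nonzero, and by Theorem \ref{harational} and Lemma \ref{tensorfusion} every irreducible constituent of the resulting $T_r$-module in $V^0$ has $i$-th coordinate $L(\ha,\frac{c_i}{2})_\F$ for $i\notin supp(d)$ (at positions in $supp(d)$ the fusion $L(\ha,\se)_\F\times L(\ha,\se)_\F$ gives $0$ or $\ha$, not necessarily $0$). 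Hence there exists $c'\in C$ with $c'_i=c_i$ for all $i\notin supp(d)$, but possibly $c'_i=1$ for some $i\in supp(d)$; in general one cannot arrange $c'=c$. Fusing with $V(c')$ still does what you want, because $L(\ha,\ha)_\F\times L(\ha,\se)_\F=L(\ha,\se)_\F$ leaves the $\se$-slots unchanged, and then your second paragraph (transport in both directions giving mutually inverse maps on multiplicity spaces, phrased for abstract $T_r$-isomorphism type so as to avoid the grading subtlety you correctly flag) goes through. So the transport mechanism is right, but you must replace the unjustified ``$c\in C$'' by the fusion argument producing $c'\in C$ as above; without it the proof is incomplete.
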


Recall the codes $\cal C$ and $\cal D.$ The main  result in this section is the following. 
\begin{thm}\label{t4.5} Any FVOA $V$ is rational. Moreover, 
$${C}\subset { D}^{\perp}=\{x=(x_1,...,x_r)\in \Z_2^r\mid x\cdot d=0 \text{   
for all } d\in D\}.$$
\end{thm}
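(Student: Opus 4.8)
The plan is to establish the two assertions separately, though they are closely related through the structure theory developed above. For the rationality, I would argue as follows. By Lemma~\ref{coded}, $V^0$ is a simple vertex operator superalgebra and $V=\bigoplus_{d\in D}V^d$ is a finite direct sum of irreducible $V^0$-modules; moreover, since $C$ is even (Lemma~\ref{codeC}, because $V$ is a vertex operator algebra), $V^0$ is actually a vertex operator algebra. The first reduction is therefore to prove that $V^0=\bigoplus_{c\in C}V(c)$ is rational, and then to deduce the rationality of $V$ from the fact that $V$ is a finite sum of simple current $V^0$-modules. To prove $V^0$ is rational I would follow the same strategy as in \cite{DGH}: given an admissible $V^0$-module $W$, restrict it to the rational subalgebra $T_r$ (rational by Lemma~\ref{tensorrational} and Theorem~\ref{harational}), decompose it into irreducible $T_r$-modules, and use the fusion rules of Theorem~\ref{harational} together with Lemma~\ref{tensorfusion} to organize these into $V(c)$-isotypic pieces; one then shows any such $W$ is a sum of copies of the $V^0$-modules appearing, which are controlled by the code $C$. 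With $V^0$ rational, the extension $V=\bigoplus_{d\in D}V^d$ by simple currents indexed by the (finite) group generated by $D$ yields rationality of $V$ by the standard argument (an admissible $V$-module restricts to an admissible $V^0$-module, decomposes, and the $V$-action reassembles the pieces into irreducible $V$-modules).

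For the inclusion $C\subset D^\perp$, the key observation is a weight/grading compatibility. Fix $c\in C$ and $d\in D$; I want $c\cdot d\equiv 0\pmod 2$. By Lemma~\ref{l2.2}, $V(c)=L(\tfrac12 c_1,\ldots,\tfrac12 c_r)_\F$ sits inside $V^0$, and $V^d$ contains some irreducible $T_r$-module $L(h_1,\ldots,h_r)_\F$ with $h_i=\tfrac1{16}$ exactly when $d_i=1$. The product $Y(u,z)w$ for $u\in V(c)$, $w\in V^d$ lands in $V^d$ (since $V^d$ is a $V^0$-module), and the leading exponents are governed by the fusion rules for $L(\tfrac12,0)$ in each tensor factor. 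In the $i$-th factor, fusing $L(\tfrac12,\tfrac{c_i}2)$ with $L(\tfrac12,h_i)$ produces a module whose lowest conformal weight differs from $h_i$ by: $0$ if $c_i=0$; $\tfrac12$ if $c_i=1$ and $h_i\in\{0,\tfrac12\}$; and $0$ if $c_i=1$ and $h_i=\tfrac1{16}$ (here $L(\tfrac12,\tfrac12)\times L(\tfrac12,\tfrac1{16})=L(\tfrac12,\tfrac1{16})$). Hence the intertwining operator $Y(u,z)|_{V^d}$ has powers of $z$ shifted by a half-integer when $c\cdot d$ is odd and by an integer when $c\cdot d$ is even. Since $V$ is $\Z$-graded and $Y(u,z)$ for $u\in V\subset V^0$ must have integer-power shifts (being the genuine vertex operator of the VOA $V$), the half-integer case is impossible unless the relevant intertwining operator vanishes; by Lemma~\ref{dl} it does not vanish (as $V(c)$ and $V^d$ are irreducible $V^0$-modules and $V$ is simple). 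Therefore $c\cdot d\equiv 0\pmod 2$, i.e.\ $C\subset D^\perp$.

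I expect the main obstacle to be the rationality half, specifically the passage from rationality of $T_r$ to rationality of $V^0$. The delicate point, flagged already in the discussion before Lemma~\ref{l2.2}, is that over a field of characteristic $p$ the $\Z$-grading of $V$ is \emph{not} recovered from $L(0)$-eigenvalues, so the multiplicity spaces $m_{h_1,\ldots,h_r}$ are not a priori finite and the usual complex-analytic counting arguments do not apply directly. One must instead use the algebraic structure: Lemma~\ref{l2.2} pins down $V_0=\F\1$, Lemma~\ref{l2.3} shows the multiplicities inside a fixed $V^d$ are constant, and then the simple-current property in Lemma~\ref{coded}(2) plus the finiteness of $D$ control the whole decomposition. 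Concretely, I would prove that for an admissible $V^0$-module $W$, the $T_r$-isotypic decomposition combined with Lemma~\ref{dl} forces $W$ to be a direct sum of irreducible $V^0$-modules each of which appears already inside some quotient determined by $C$; the finiteness of such modules comes from the finiteness of the code $C$ together with Theorem~\ref{t2.1}. The second obstacle, minor by comparison, is making the grading-shift argument in the previous paragraph fully rigorous over a field of positive characteristic: one should phrase it in terms of the $\tfrac1T$-graded powers of $z$ appearing in the intertwining operator of Section~2 and the fact that the fusion rules of Theorem~\ref{harational} hold verbatim over $\F$, so that the half-integer obstruction is genuine and not an artifact of reduction mod $p$.
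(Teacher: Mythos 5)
The paper's own proof of Theorem \ref{t4.5} is a one-line appeal to the proof of Theorem 2.12 of \cite{DGH}, asserted to remain valid over $\F$; so the benchmark is the direct argument of \cite{DGH}, which treats an arbitrary admissible $V$-module by decomposing it under $T_r$ and exploiting the fusion rules, not a two-step reduction. The genuine gap in your proposal is in the rationality half. You reduce it to (a) rationality of $V^0=M_C$ and (b) ``the standard argument'' that a finite simple-current extension of a rational vertex operator algebra is rational, and neither step is actually argued. For (a), a code vertex operator algebra is itself a framed vertex operator algebra, so this is an instance of the theorem being proved, and your sentence ``one then shows any such $W$ is a sum of copies of the $V^0$-modules appearing'' is exactly the content that has to be supplied. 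For (b), there is no off-the-shelf theorem to invoke: over $\C$ rationality of simple-current extensions rests on substantial later work, and over $\F$ nothing of the kind is available in this paper; indeed at this point Lemma \ref{coded} only gives that each $V^d$ is an irreducible $V^0$-module, the simple-current property being used only over $\C$ in Section 9 via citations to \cite{DGH}, \cite{LY}, \cite{DJX}. The heuristic ``restrict, decompose, reassemble'' does not produce complete reducibility; the hard point is to show that the $V$-submodule of an admissible module generated by an irreducible $T_r$-submodule is completely reducible and that invariant complements exist, and this is precisely where the fusion rules and the codes $C$, $D$ enter in the proof the paper imports.

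Your second half, the inclusion $C\subset D^{\perp}$, is essentially the right argument and the one underlying \cite{DGH}: restrict $Y(u,z)$ for $u\in V(c)$ to an irreducible $T_r$-submodule of $V^d$, project onto an isotypic component, use Lemma \ref{dl} and simplicity for nonvanishing, and compare the exponent (or degree) shift forced by the fusion rules with the integrality of powers of $z$ in the vertex operators of the $\Z$-graded algebra $V$. Two caveats. First, your per-factor bookkeeping does not match your conclusion: with the exponent convention of Section 2 the half-integral contribution $-\frac{c_i}{2}$ occurs exactly at coordinates with $c_i=1$ and $h_i=\se$ (i.e.\ $d_i=1$), giving total shift $-\frac{c\cdot d}{2}$ modulo $\Z$; your tally of lowest-weight differences instead counts the coordinates with $c_i=1$, $d_i=0$ and gives parity $\wt(c)-c\cdot d$, so it yields $c\cdot d$ even only after the additional observation that $\wt(c)$ is even (equivalently, that the weight of $u$ itself is integral), a step you omit. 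Second, as you yourself note, over $\F$ the $\Z$-grading of $V$ is not recovered from $L(0)$-eigenvalues, so all of this must be phrased through the degree and $\frac{1}{T}\Z$-exponent conventions for intertwining operators in Section 2 rather than through ``lowest conformal weights''; carrying that out carefully is part of what justifying the phrase ``the proof of \cite{DGH} is valid here'' requires, and your sketch acknowledges but does not perform it.
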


The proof of Theorem 2.12 of \cite{DGH} in the case $\F=\C$ is valid here.

\section{Clifford algebras and  vertex operator algebras}
\setcounter{equation}{0}

In this section, we study the vertex operator superalgebra $V(H_\F)$ and its twisted modules for general vector space $H_\F$  of dimension $r.$ Let  $\D$ be an integral domain as before.

Let $H_{\D}=\sum_{i=1}^{r}\D a_{i}$ be a free $\D$-module of rank $r$ with a nondegenerate symmetric bilinear form $(,)$ such that  $\{a_i|
i=1,2,...r\}$ is an orthonormal basis of $H_\D.$
 The Clifford algebra $A(H_\D,d)$  associated to a codeword $d=(d_1,...,d_r)\in \{0,1\}^r$    
 is an 
associative algebra over $\D$ generated by $\{a_i(n_i)\mid 1\leq i\leq r,n_i\in {\Bbb
Z}+\frac{1}{2}(d_i+1)\}$ subject to the relation
$$[a(n),b(m)]_{+}=(a,b)\delta_{{m+n},0}$$
for $a,b\in H_\D.$
Let $A^{+}(H_\D,d)$ be the subalgebra generated by $\{a_i(n_i)|1\leq i\leq r, n_i\in
{\Bbb Z}+\frac{1}{2}(1+d_i),n_i>0 \},$ and make $\D$ a $1$-dimensional
$A^{+}(H_\D,d)$-module so that $a_i(n)1=0$
for $n>0$. We have the induced module
\begin{eqnarray*}
& &V(H_\D,d)=A(H_\D,d)\otimes_{A^{+}(H_\D,d)}\D\\
& & \ \ \ \ \ \ \ \ \ \  \ \ \,\cong \wedge_{\D} [a_i(-n_i)\mid
n_i>0 ,i=1,2,...r] \prod_{d_j=1}(\D+\D a_j(0)) ({{\rm linearly}})
 \end{eqnarray*}
so that the action of $a_i(n)$ is given by $\frac{\partial}{\partial a_i(-n)}$
if $n$ is positive and by multiplication by $a_i(n)$ if $n$ is nonpositive.
The $V(H_\D,d)$ is naturally graded by $\frac{1}{2}\Z$ with
$$V(H_\D,d)_{n+\frac{|d|}{16}}= \<a_{i_1}(-n_1)a_{i_2}(-n_2)\cdots a_{i_k}(-n_k)\mid
n_1+n_2+\cdots n_k=n \>.$$
Note that $V(H_\D,d)=V(H_\D,d)_{\bar 0}\oplus V(H_\D,d)_{\bar 1}$ where $V(H_\D,d)_{\bar i}$ is the span of
monomials whose length is congruent to $i$ modulo $2.$

For short,  we let  $V(H_\D)=V(H_\D,(0,...,0)), $ $\1=1$ and $\omega=\frac{1}{2}\sum_{i=1}^ra_i(-\frac{1}{2})a_i(-\frac{3}{2}).$ Then $V(H_\D)=V(\D a_1)\wedge\cdots \wedge V(\D a_r).$    From \cite{FFR, KW, L1, L2},  we have
\begin{thm}\label{super} (1) $(V(H_\D), Y, \1, \omega)$ is a vertex  operator superalgebra over $\D$ generated by $a(-1/2)$ for
$a\in H_\D$ and with $Y(a(-1/2),z)=a(z)=\sum_{n\in\frac{1}{2}+\Z}a(n)z^{-n-1/2}.$ Moreover, if $\D$ is a field, $V(H_\D)$ is a simple vertex operator superalgebra.

(2)  $V(H_\D)_{\bar 0}$ is a vertex operator algebra. If $\D$ is a field, both  $V(H_\D)_{\bar 0}$ and  $V(H_\D)_{\bar 1}$ are irreducible  $V(H_\D)_{\bar 0}$-modules. In particular,  $V(H_\D)_{\bar 0}$ is simple. 
\end{thm}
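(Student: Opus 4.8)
The plan is to establish Theorem \ref{super} by closely following the classical fermionic construction, but being careful about the integral domain $\D$ rather than $\C$. For part (1), the first step is to write down the vertex operator $Y(a(-1/2),z)=a(z)$ explicitly and verify the weak commutativity (locality) of the generating fields: for $a,b\in H_\D$, one checks
$$(z_1-z_2)\,a(z_1)b(z_2)=-(z_1-z_2)\,b(z_2)a(z_1)$$
using only the anticommutation relation $[a(n),b(m)]_+=(a,b)\delta_{m+n,0}$, which holds over any $\D$. Since $V(H_\D)$ is generated by the $a(-1/2)$, one then invokes the reconstruction/existence theorem for vertex (super)algebras over $\D$ (available from the references \cite{FFR}, \cite{KW}, \cite{L1}, and valid over an integral domain with $1/2\in\D$) to produce $Y(v,z)$ for all $v$, with the usual normal-ordered-product formula. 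Next I would verify that $\omega=\frac12\sum_i a_i(-1/2)a_i(-3/2)$ gives a Virasoro element: compute $L(n)=\frac12\sum_i\sum_{k}(k+\tfrac12)\,{:}a_i(-k)a_i(n+k){:}$ and check $[L(m),L(n)]=(m-n)L(m+n)+\frac{m^3-m}{12}\cdot\frac r2\,\delta_{m+n,0}$, so the central charge is $r/2\in\D$; one also checks $L(-1)=$ translation and $L(0)$ gives the stated $\tfrac12\Z$-grading, and the grading-restriction axioms hold since each graded piece is a finitely generated free $\D$-module. The $\Z_2$-grading by parity of monomial length is manifestly respected by the $a(z)$ and hence by all $Y(v,z)$.

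For the simplicity claim when $\D$ is a field, the standard argument is to show any nonzero ideal $I$ contains $\1$. Take $0\neq v\in I$ of minimal degree; by applying the operators $a_i(n)$ with $n>0$ (which appear as modes of $Y(a_i(-1/2),z)$ and act as $\partial/\partial a_i(-n)$, lowering degree) one pushes $v$ down to a nonzero scalar multiple of $\1$, using that the symmetric form is nondegenerate so that these lowering operators separate points; this works over any field. Hence $I=V(H_\D)$.

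For part (2), that $V(H_\D)_{\bar 0}$ is a vertex operator algebra is immediate: it is the even part, it contains $\1$ and $\omega$ (both even), and it is closed under all products since the super sign in the Jacobi identity is trivial on even elements. For the module structure, $V(H_\D)_{\bar 1}$ is clearly a $V(H_\D)_{\bar 0}$-module under the restricted vertex operators. The irreducibility of each of $V(H_\D)_{\bar 0}$ and $V(H_\D)_{\bar 1}$ as $V(H_\D)_{\bar 0}$-modules (when $\D$ is a field) again follows a minimal-degree-vector argument: inside either parity component, a nonzero submodule must, upon applying suitable even products of lowering modes $a_i(n)a_j(m)$ ($n,m>0$), reach the lowest graded piece, which is one-dimensional ($\D\1$ in the even case, $\D\, a_i(-1/2)$-type span in the odd case — actually the degree-$\tfrac12$ piece $\sum_i\D a_i(-1/2)$, but one then argues this whole piece is generated from any nonzero vector in it using the even action, since the form is nondegenerate); from the bottom piece one generates everything by raising modes. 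Simplicity of $V(H_\D)_{\bar 0}$ as a vertex operator algebra then follows from its irreducibility as a module over itself.

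The main obstacle I anticipate is not any single hard computation but rather the bookkeeping needed to run the simplicity and irreducibility arguments over an arbitrary field (or integral domain) without appealing to a positive-definite inner product or to characteristic-zero facts: one must check that the relevant lowering operators genuinely separate the free $\D$-basis of monomials and that no small characteristic causes an unexpected degeneracy in the Virasoro relations or in the generation process (the hypothesis $1/2\in\D$ is exactly what keeps $\omega$ and the $L(n)$ well-defined, and rules out $\ch\D=2$). Since the referenced works \cite{FFR}, \cite{KW}, \cite{L1} already carry out the construction over rings, the cleanest route is to cite the general existence theorem for the vertex superalgebra structure and devote the actual argument to the grading, the central charge, and the simplicity/irreducibility statements.
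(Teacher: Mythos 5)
Your part (1) is essentially sound, and it is in fact more detailed than what the paper does: the paper gives no proof of this theorem, citing \cite{FFR}, \cite{KW}, \cite{L1} for the structure and only adding the observation that the normal-ordered expansion of $Y(v,z)$ involves integer binomial coefficients, so all component operators are defined over $\D$. Your locality-plus-reconstruction route, the Virasoro check with central charge $r/2\in\D$, and the simplicity argument are fine: the annihilation operators $a_i(n)$, $n>0$, really are individual modes of the odd generators $a_i(-\ha)$, so they preserve any ideal, and stripping off the factors of a monomial of maximal total degree occurring in a nonzero element kills every other monomial and produces a nonzero multiple of $\1$; this works in any characteristic different from $2$.

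The gap is in part (2). A $V(H_\D)_{\bar 0}$-submodule is only closed under the modes of \emph{even} vectors, and those modes are infinite normal-ordered sums of the form $\sum_{p+q=N}:a_i(p)a_j(q):$, not the individual quadratics $a_i(n)a_j(m)$ that your argument applies; when such a mode acts, several monomials of the same top degree can contribute simultaneously and possible cancellations have to be ruled out, so the "lowering operators separate points" step is no longer automatic. The parenthetical claim for the odd part is also not correct as stated: the degree-preserving even modes act on the degree-$\ha$ space through the operators $u_0$ with $u=a_i(-\ha)a_j(-\ha)$, i.e.\ an orthogonal Lie algebra in its natural representation, and already for $r=2$ this action on the two-dimensional bottom space is not irreducible over an algebraically closed field (the single rotation generator has two eigenlines); nondegeneracy of the form alone does not let you sweep out the bottom piece, you must go up in degree and come back down. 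The cleanest repair is to deduce (2) from (1): for $0\ne w\in W\subseteq V(H_\D)_{\bar\epsilon}$ with $W$ a $V(H_\D)_{\bar 0}$-submodule and $\D$ a field, simplicity of the full superalgebra gives $V(H_\D)=\mathrm{span}\{u_nw \mid u\in V(H_\D),\, n\}$, and comparing parities of $u$ and of the target shows $V(H_\D)_{\bar\epsilon}=\mathrm{span}\{u_nw\mid u\in V(H_\D)_{\bar 0},\, n\}\subseteq W$, hence $W=V(H_\D)_{\bar\epsilon}$. (Alternatively, over $\F$ one can invoke the decomposition of $V(\F a_i)_{\bar 0}$ and $V(\F a_i)_{\bar 1}$ into $L(\ha,0)_\F$ and $L(\ha,\ha)_\F$ from \cite{KR}, \cite{DR2}, which is the route the paper implicitly relies on.) The statement that $V(H_\D)_{\bar 0}$ is a vertex operator algebra is immediate, as you say.
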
 

To see how the vertex operator $Y(v,z)$ is defined for $v=b_{1}(-n_1-\frac{1}{2})\cdots b_{k}(-n_k-\frac{1}{2})\in V(H_\D)$,  we need
a normal ordering:
\begin{equation}\label{normalorder}
:b_1(n_1)\cdots b_k(n_k):=(-1)^{|\sigma|} b_{i_1}(n_{i_1})\cdots b_{i_k}(n_{i_k})
\end{equation}
such that $n_{i_1}\leq \cdots \leq n_{i_k}$ where  
$\sigma$ is the permutation of $\{1,...,k\}$ by sending $j$ to $i_j.$ It is easy to see that
$$Y(v,z)=:(\partial_{n_1}b_1(z))\cdots (\partial_{n_k}b_k(z)):$$
where $\partial_n=\frac{1}{n!}(\frac{d}{dz})^n.$ Note that for any $m\in \Z$ the constant
${m\choose n}$ for $n\geq 0$ is an integer. So the component operators of $Y(v,z)$ are  well defined
linear operators on $V(H_\D).$

We now define a $\D$-valued bilinear form $(\cdot,\cdot)$ on $V(H_\D)$ such that   the monomials form an orthonormal   basis.  It is clear that the form is symmetric and nondegenerate. 

\begin{prop}\label{bilinear}   Let  $a\in H_\D,$ $u,v\in V(H_\D),$ $w\in V(H_\D)_{\bar i},$ $m\in\frac{1}{2}+\Z$ and $n\in \Z.$ Then

(1)    $(a(m)u,v)=(u,a(-m)v),$

(2) $(L(n)u,v)=(u,L(-n)v),$

(3) $\frac{L(1)^n}{n!}$ is well defined if $n\geq 0,$

(4) The form is invariant:
$$(Y(w,z)u,v)=(u, Y(e^{zL(1)}(-1)^{\deg w+2(\deg w)^2+i}z^{-2\deg w}w,z^{-1})v).$$
\end{prop}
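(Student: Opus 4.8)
The plan is to reduce everything to the rank-one case and then to the standard Heisenberg/Clifford computation, since the bilinear form is the tensor product of the rank-one forms under the identification $V(H_\D)=V(\D a_1)\wedge\cdots\wedge V(\D a_r)$, and the operators $a_i(m)$, $L(n)$ respect this decomposition. First I would prove (1): by definition the monomials $a_{i_1}(-n_1-\tfrac12)\cdots a_{i_k}(-n_k-\tfrac12)$ (with a fixed ordering convention) form an orthonormal basis, and $a(m)$ for $m>0$ acts as a ``partial derivative'' $\partial/\partial a(-m)$ while $a(-m)$ for $m>0$ acts by wedge multiplication by $a(-m)$. On the orthonormal monomial basis, these two operations are mutually adjoint up to the Koszul sign $(-1)^{|\sigma|}$ built into the normal ordering, and the sign bookkeeping is exactly symmetric between $a(m)$ and $a(-m)$; hence $(a(m)u,v)=(u,a(-m)v)$. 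This is the anti-self-adjointness of creation versus annihilation on a fermionic Fock space, and it is purely combinatorial.

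Next, (2) follows from (1) by writing $L(n)=\tfrac12\sum_i\sum_{j\in\frac12+\Z}(j-\tfrac{n}{2}):a_i(n-j)a_i(j):$ (this is the standard formula obtained from $\omega=\tfrac12\sum_i a_i(-\tfrac12)a_i(-\tfrac32)$), so $L(n)$ is a sum of products $a_i(m_1)a_i(m_2)$ with $m_1+m_2=n$; applying (1) twice, taking the adjoint reverses the order of the two factors and negates both indices, and the normal ordering plus the symmetry $j-\tfrac n2 \leftrightarrow (n-j)-\tfrac n2$ under $j\mapsto n-j$ show the adjoint is precisely $L(-n)$. Part (3) is then immediate from Lemma 3.3 (or rather its $V(H_\F)$ analogue), since $V(H_\D)$ as a $Vir_\D$-module is a direct sum of modules induced up from lowest-weight data, and the same inductive argument — $\tfrac{L(1)^n}{n!}$ kills the generators and the commutation relation $[L(1)^n, a(m)]$ expands with integer binomial coefficients — shows $\tfrac{L(1)^n}{n!}$ is a well-defined $\D$-linear operator; alternatively one can invoke part (2) to see that the matrix coefficients $(\tfrac{L(1)^n}{n!}u,v)$ stabilize.

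Finally, for (4), the cleanest route is to first verify the invariance formula for the generators $w=a(-\tfrac12)$, $a\in H_\D$, where it reduces to the identity $(a(z)u,v)=(u,a(z^{-1})v)\,z^{-1}$ interpreted componentwise — i.e. $(a(m)u,v)=(u,a(-m)v)$, which is exactly (1), together with checking that $\deg a(-\tfrac12)=\tfrac12$ gives the sign $(-1)^{\deg w+2(\deg w)^2+r}=(-1)^{\tfrac12+\tfrac12+r}=(-1)^{r+1}$ and exponent $z^{-2\deg w}=z^{-1}$ correctly, and $e^{zL(1)}a(-\tfrac12)=a(-\tfrac12)$ since $L(1)a(-\tfrac12)=0$. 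Then I would propagate the formula to all of $V(H_\D)$ by the usual bootstrap: if the invariance formula holds for $w=u$ and for $w=v$, it holds for $w=u_nv$ for all $n$, using skew-symmetry and associativity of vertex operators together with (1)–(2); since $V(H_\D)$ is generated by the $a(-\tfrac12)$, this finishes it. The main obstacle I anticipate is purely the sign/exponent bookkeeping in this last propagation step — tracking how $(-1)^{\deg w+2(\deg w)^2+r}$, the factor $e^{zL(1)}$, and the fermionic signs transform under $w\mapsto u_nv$ — rather than any conceptual difficulty; over $\D$ (not $\C$) one must also be careful that every binomial coefficient and every power of $\tfrac12$ that appears lies in $\D$, which it does because $\tfrac12\in\D$ and $\binom{m}{n}\in\Z$ for $m\in\Z$, $n\ge0$.
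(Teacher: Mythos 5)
Your proposal is correct and follows essentially the same route as the paper: (1) is read off the orthonormal monomial basis, (2) comes from the quadratic mode expansion of $L(n)$ together with (1), (3) is obtained either by your Clifford-mode induction with coefficients in $\Z[\frac{1}{2}]$ (exactly how the paper treats the twisted analogue) or, as in the paper's actual proof, by adjointness to $\frac{L(-1)^n}{n!}$ via the form, and (4) is verified on the generators $a(-\frac{1}{2})$ and then propagated to all of $V(H_\D)$, a step the paper delegates to the cited arguments of Dong--Lin and Ai--Lin. The only cosmetic wobble is your appeal to a Virasoro highest-weight decomposition for (3), which is unnecessary (and unjustified over a general $\D$); the Clifford-generator induction you describe immediately afterwards is the argument that actually works.
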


\begin{proof}  (1)  It is good enough to take $a=a_i$ for some $i.$ The result follows immediately from the definition of form.

(2) follows from (1) by noting that
$$L(n)=\frac{1}{2}\sum_{i=1}^r\sum_{j\in \frac{1}{2}+\Z}j:a_i(-j)a_i(j+n):.$$

(3) Since $\frac{L(-1)^n}{n!}$ is well defined for $n\geq 0,$ using the invariant property we see that 
$$( \frac{L(1)^n}{n!}u,v)=(u,  \frac{L(-1)^n}{n!}v)$$
and  $\frac{L(1)^n}{n!}$ is well defined.

(4) From the definition of the bilinear form,  we know the invariant property holds for $w=a(-\frac{1}{2}).$ Note that $V(H_\D)$ is generated by $a_i(-\frac{1}{2})$ for $i=1,...,r.$ It follows the proofs of Proposition 2.11 of  \cite{DLin} and Proposition 2.5 of \cite{AL} that the invariant property holds for any $w.$
\end{proof}

 We now assume that $\D=\F$ is an algebraically closed field  with $\ch \F\ne 2, 7.$ 
\begin{prop}  (1)  Set $\omega_i=\frac{1}{2}a_i(-\frac{3}{2})a_i(-\frac{1}{2})$ for $i=1,...,d.$ Then each $\omega_i$
generates a vertex operator subalgebra $\<\omega_i\>$ isomorphic to $L(\ha,0)_\F$ and $\{\omega_1,...,\o_r\}$
forms a Virasoro frame. In fact, $V(H_\F)$ is a framed vertex operator superalgebra. 

(2)  $(V(H_\F), Y, \1,\omega)$ is a holomorphic vertex operator superalgebra in the sense that $V(H_\F)$ is rational and $V(H_\F)$ is the only irreducible module for itself.

(3) We have decompositions
\begin{eqnarray*}
& &V(H_\F)\cong \bigoplus_{h_i\in\{0,\frac{1}{2}\}}L(h_1,\ldots,h_{r})_\F\label{3.1}\\
& &V(H_\F)_{\bar 0}\cong \bigoplus_{h_i\in\{0,\frac{1}{2}\},\sum_{i}h_i\in\Z}L(h_1,\ldots,h_{r})_\F\label{3.2}\\
& &V(H_\F)_{\bar 1}\cong \bigoplus_{h_i\in\{0,\frac{1}{2}\},\sum_{i}h_i\in\Z+\ha}L(h_1,\ldots,h_{r})_\F\label{3.3}.
\end{eqnarray*}
as modules for $T_r=\<\omega_i|i=1,...,r\>.$
\end{prop}

\begin{proof}  (1) follows from Theorem \ref{harational} and (3) is clear.  The rationality of $V(H_\F)$ follows immediately from Theorem \ref{t4.5}. To prove (2), let $W=\oplus_{n\in\frac{1}{2}\Z_+}W(n)$ be an irreducible $V(H_\F)$-module with $W(0)\ne 0.$ Let $Y_W(a(-\frac{1}{2}),z)=\sum_{m\in \Z}a(-\frac{1}{2})_mz^{-m-1}$ for $a\in H_\F.$ Then $W$ is an irreducible
$A(H_\F,0)$-module such that $a(-\frac{1}{2}+s)$ acts as $a(-\frac{1}{2})_{-1+s}$ for $s\in \Z.$ Clearly, $a(-\frac{1}{2}+s)W(0)=0$
for $a\in H_\F$ and $s>0.$ This implies $W$ is isomorphic to $V(H_\F)$ as $A(H_\F,0)$-modules. Since $V(H_\F)$ is generated 
by $a(-\frac{1}{2})$ for $a\in H_\F,$ $W$ is isomorphic to $V(H_\F)$ as $V(H_\F)$-modules. 
\end{proof}

Write $Y(\omega_i,z)=\sum_{n\in\Z}L_i(n)z^{-n-2}$ for $i=1,...,r.$ Set $\sigma_i=(-1)^{2L_i(0)}.$ Then
$\sigma_i$ is an automorphism of $V(H_\F)$  from the fusion rules of $L(\ha,0)_\F$-modules \cite{DR1} and Lemma \ref{tensorfusion}, and is the Miyamoto involution associated to $\omega_i$ \cite{M1, M2}.  Recall the
codeword $d$ and $V(H_\F,d).$ Let $\sigma(d)=\prod_{i, d_i=1}\sigma_i.$ Then $ \sigma(d)$ is an automorphism of $V(H_\F).$

\begin{prop} Let $d$ be as before. Then

(1) $V(H_\F)$ has an unique $\sigma(d)$-twisted module $V(H_\F)(\sigma(d))$ if $|d|$ is even and
has two inequivalent $\sigma(d)$-twisted modules $ V(H_\F)(\sigma(d))^i$ for $i=1,2$ if $|d|$ is odd.

(2) $V(H_\F,d)$ is a $ \sigma(d)$-twisted $V(H_\F)$-module such that
$$Y(a_i(-1/2),z)=\sum_{n\in\frac{1}{2}(d_i+1)+\Z}a_i(n)z^{-n-1/2}$$
 for $i=1,...,r.$

(3) We have decompositions
$$ 
V(H_\F,d)\cong \left\{\begin{array}{ll} 2^{|d|/2}V(H_\F)(\sigma(d)) & |d|\in 2\Z\\
2^{(|d|-1)/2}V(H_\F)(\sigma(d))^1\oplus 2^{(|d|-1)/2}V(H_\F)(\sigma(d))^2 & |d|\in2\Z+1.
\end{array}\right.$$
\end{prop}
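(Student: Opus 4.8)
The plan is to prove all three parts together by analyzing $V(H_\F,d)$ directly as a module for the Clifford-type algebra and for the Virasoro frame $T_r$, then matching it with the abstract twisted modules. First I would fix the codeword $d$ and observe that $V(H_\F,d)$ carries commuting actions of the $r$ Virasoro algebras $\langle\omega_i\rangle$: for $i\notin\mathrm{supp}(d)$ the mode algebra of $a_i$ is untwisted so $V(\F a_i,0)_{\bar 0}\cong L(\tfrac12,0)_\F$ and $V(\F a_i,0)_{\bar 1}\cong L(\tfrac12,\tfrac12)_\F$, while for $i\in\mathrm{supp}(d)$ the modes of $a_i$ lie in $\Z$, so by \cite{KR}, \cite{DR2} the Fock space $\wedge_\F[a_i(-n)\mid n\ge 0]$ is the direct sum of two copies of $L(\tfrac12,\tfrac1{16})_\F$ (one generated by the vacuum, one by $a_i(0)$). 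Taking the tensor product over $i=1,\dots,r$ gives, as a $T_r$-module,
\[
V(H_\F,d)\cong 2^{|d|}\!\!\bigoplus_{\substack{h_i\in\{0,\frac12\},\ i\notin\mathrm{supp}(d)\\ h_i=\frac1{16},\ i\in\mathrm{supp}(d)}}\!\! L(h_1,\dots,h_r)_\F,
\]
which in particular establishes part (2): the prescribed vertex operator $Y(a_i(-1/2),z)=\sum_{n\in\frac12(d_i+1)+\Z}a_i(n)z^{-n-1/2}$ satisfies the twisted Jacobi identity relative to $\tau(d)=\prod_{d_i=1}\tau_i$ because $\tau_i$ acts as $(-1)^{2L_i(0)}$ and the half-integer shift in the modes of $a_i$ for $i\in\mathrm{supp}(d)$ is exactly what the eigenvalue $\eta^{j_i}=(-1)$ demands in the $\tau(d)$-twisted Jacobi identity; this is the standard free-fermion bosonization computation.

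Next I would pin down the twisted modules abstractly. By Theorem \ref{harational} and Lemma \ref{tensorrational}, $T_r$ is rational, and $V(H_\F)$ is holomorphic (proved in the preceding proposition), so the number and structure of $\tau(d)$-twisted $V(H_\F)$-modules is governed by the general theory: a holomorphic vertex operator (super)algebra has a unique irreducible $g$-twisted module when the relevant ``anomaly'' vanishes and a small finite number otherwise. Concretely, one $\tau(d)$-twisted module is built by the same induced-module construction as $V(H_\F,d)$ but using the \emph{irreducible} twisted Clifford module: for $|d|$ even the twisted Clifford algebra $A(H_\F,d)$ (the part with integer modes coming from $\mathrm{supp}(d)$) has a unique irreducible module since the number of ``zero modes'' $a_i(0)$, $i\in\mathrm{supp}(d)$, is even and they generate a Clifford algebra on an even-dimensional space, which has a unique irreducible module of dimension $2^{|d|/2}$; for $|d|$ odd that Clifford algebra is on an odd-dimensional space and has exactly two inequivalent irreducibles (distinguished by the action of the central product of all zero modes), each of dimension $2^{(|d|-1)/2}$. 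Restricting further to $V(H_\F)$ (the even part) could in principle split these, but the $\sigma$-twist argument — $\sigma\in\Aut(V(H_\F))$ acting as $-1$ on $V(H_\F)_{\bar1}$ — shows the restriction stays irreducible, giving exactly the counts asserted in part (1): one module $V(H_\F)(\tau(d))$ when $|d|$ is even, two modules $V(H_\F)(\tau(d))^i$ when $|d|$ is odd.

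Finally, part (3) follows by comparing dimensions of the pieces. The module $V(H_\F,d)$ is constructed from the \emph{full} Fock space on all the zero modes $a_i(0)$, $i\in\mathrm{supp}(d)$, i.e.\ from the regular representation $\wedge_\F[a_i(0)\mid i\in\mathrm{supp}(d)]$ of that zero-mode Clifford algebra, whereas $V(H_\F)(\tau(d))$ is built from its irreducible module. For $|d|$ even the regular representation of a Clifford algebra on a $|d|$-dimensional space is $2^{|d|/2}$ copies of the unique irreducible, so $V(H_\F,d)\cong 2^{|d|/2}\,V(H_\F)(\tau(d))$; for $|d|$ odd it decomposes as $2^{|d|/2-1}$ copies of each of the two irreducibles, giving the stated formula. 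One checks this module-theoretically rather than by raw dimension count: the $2^{|d|}$ multiplicity in the $T_r$-decomposition above is $2^{|d|/2}$ (resp.\ $2^{|d|/2-1}+2^{|d|/2-1}$) times the multiplicity $2^{|d|/2}$ (resp.\ $2^{(|d|-1)/2}$) appearing in the $T_r$-decomposition of $V(H_\F)(\tau(d))$ (resp.\ $V(H_\F)(\tau(d))^i$), and since all these are twisted $V(H_\F)$-modules and $V(H_\F)$ is holomorphic, matching $T_r$-decompositions forces the claimed isomorphism.

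\medskip

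The main obstacle I anticipate is establishing part (1) rigorously over a field of positive characteristic: the ``holomorphic VOSA has a unique/two-element set of twisted modules'' principle is usually invoked via modular-invariance or Zhu-algebra arguments that are not obviously available here, so I expect the cleanest route is the direct Clifford-algebra analysis of zero modes sketched above (counting irreducibles of a finite-dimensional Clifford algebra over $\F$, which is purely algebraic and characteristic-insensitive once $\ch\F\ne2$), together with the $\sigma$-twist argument to control what happens upon restriction from $V(H_\F,d)$'s ambient superalgebra structure to $V(H_\F)$. Verifying that the twisted Jacobi identity genuinely holds for the shifted modes in part (2) — i.e.\ that $V(H_\F,d)$ really is a $\tau(d)$-twisted module and not merely a $T_r$-module — is routine but must be done carefully using the normal-ordered vertex operator formula and the generating property of $a_i(-1/2)$, invoking the associativity and commutativity established in \cite{DR1}.
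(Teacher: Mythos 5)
Your part (3) is exactly the paper's argument: the paper proves (3) by passing to the zero-mode Clifford algebra $cl(d)\subset A(H_\F,d)$ generated by the $a_i(0)$, $i\in \mathrm{supp}(d)$, noting that it is simple with a unique irreducible of dimension $2^{|d|/2}$ when $|d|$ is even and a sum of two simple algebras with two inequivalent irreducibles of dimension $2^{(|d|-1)/2}$ when $|d|$ is odd, and observing that each irreducible $cl(d)$-submodule $W$ of the regular module generates an irreducible $\tau(d)$-twisted module $A(H_\F,d)W$; your regular-representation count is the same computation. Where you differ is in (1) and (2): the paper does not prove these but quotes Proposition 4.3 of \cite{L2} (see also \cite{DZ}), while you sketch a direct verification of the twisted module structure and a direct classification via the zero modes. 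That is a legitimate and more self-contained route, and, as you note, the Clifford-algebra counting is purely algebraic and works over any algebraically closed field with $\ch\,\F\ne 2$, which is precisely what is needed here.

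Two points in your sketch need repair if you pursue the direct route. First, $V(H_\F)$ is the full vertex operator superalgebra, not its even part, so there is no ``restriction to the even part'' step and the $\sigma$-twist discussion is beside the point: the twisted modules in the statement are modules for the whole superalgebra, and the irreducible twisted Clifford modules already give them. Second, and more substantively, the uniqueness half of (1) requires showing that \emph{every} irreducible admissible $\tau(d)$-twisted $V(H_\F)$-module arises from an irreducible $cl(d)$-module: since $V(H_\F)$ is generated by the $a(-1/2)$, any irreducible admissible twisted module $W=\oplus_{n}W(n)$ with $W(0)\ne 0$ becomes an $A(H_\F,d)$-module, the positive modes annihilate $W(0)$, so $W(0)$ is a $cl(d)$-module and $W$ is generated by, hence induced from, an irreducible $cl(d)$-submodule of $W(0)$; this is the twisted analogue of the holomorphy argument the paper gives for $V(H_\F)$ in the preceding proposition. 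You correctly discard the ``holomorphic implies a unique twisted module'' heuristic as unavailable in positive characteristic, but this completeness argument is what must replace it; as written, your proposal asserts the count of twisted modules without closing that step.
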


\pf (1) and (2) follow from Propositions 4.3 of \cite{L2} (also see \cite{DZ}). To prove (3) consider the subalgebra $cl(d)$ of $A(H_\F,d)$ generated by $a_i(0)$ where $i\in supp(d).$ Then $cl(d)$ is a Clifford algebra
of dimension $2^{|d|}$ and is a semisimple module for itself. It is well know that $cl(d)$ is a simple algebra
with the unique irreducible module of dimension $2^{|d|/2}$ if $|d|$ is even, and is a sum of two simple algebras with two inequivalent simple modules of dimension $2^{(|d|-1)/2}.$ It is clear that any irreducible
$cl(d)$-submodule $W$ of $cl(d)$ generates an irreducible $\sigma(d)$-twisted module $A(H_\F,d)W.$ The proof
is complete. \qed

We also have the following decomposition of $V(H_{\BF},d)$ as a module for $T_r:$
$$ V(H_\F,d)=\bigoplus_{h_i\in \{0,\ha,\se\}, h_i=\se \Leftrightarrow d_i=1}2^{|d|}L(h_1,...,h_r)_\F.$$

\section{Code vertex operator algebras}\label{sec:6}

\setcounter{equation}{0}

We study the code vertex operator algebra $M_C$ for any binary even code $C$ in this section following \cite{M1, M2, LY2}. 

 Fix an  even code $C\subset \Z_2^r.$ Let  $c=(c_1,...,c_r)\in C.$ For short, we set
$\L(c)=L(h_1,...,h_r)_\F$ where $h_i=0$ if $c_i=0$ and $h_i=\ha$ if $c_i=1.$ Regard each $\L(c)$ as a subspace of
$V(H_\F)_{\bar 0}$.  Then 
$$M_C=\oplus_{c\in C}\L(c)$$
is a vertex operator algebra by Theorem \ref{harational} and Lemma \ref{tensorfusion} with $C(M_C)=C$ \cite{M2}. Moreover, any code vertex operator algebra $V$ with $ C(V)=C$ is isomorphic to $M_C$ by the uniqueness of the simple current extension \cite{DM}.

Clearly, $ \sigma(d)$ is also an automorphism of $M_C$ and $\sigma(d)|_{M_C}=1$ if and only if
$d\in C^{\perp}$, where $C^\perp= \{ \beta\in \Z_2^r\mid \alpha\cdot \beta= 0 \text{ for } \alpha\in C\}$ is the dual code of $C$ in $\Z_2^r$. 
As a result, $V(H_\F,d)$ is a $ \sigma(d)$-twisted $M_C$-module and $V(H_\F,d)$ is a $M_C$-module if and only if
$d\in C^{\perp}.$ We next decompose $V(H_\F,d)$ into a direct sum of irreducible $M_C$-modules
for $d\in C^{\perp}.$ For this purpose we consider the decomposition
$$V(\F a_i, d_i)=V(\F  a_i, d_i)_{\bar 0}\oplus V(\F a_i, d_i)_{\bar 1}$$
where
$$ V(\F a_i,d_i)_{\bar s}=\<a_{i}(-n_1)a_{i}(-n_2)\cdots a_{i}(-n_k)\mid
k\equiv s (\mod 2), n_i\geq 0, n_i\in \ha(1+d_i)+\Z \>$$
for $s=0,1.$ Then
$V(\F a_i,d_i)_{\bar s}\cong L(\ha,\se)$ if $d_i=1$ for $s=0,1$ (cf. \cite{KR}, \cite{DR2}). Then
$$V(H_\F,d)=\bigoplus_{s=(s_1,...,s_r)\in \Z_2^r}V(\F a_1,d_1)_{s_1}\wedge\cdots \wedge V(\F a_r,d_r)_{s_r}.$$
For each coset $(x_1,...,x_r)+C\in \Z_2^r/C$, we set
$$V(H_\F,d,C+(x_1,...,x_r))=\bigoplus_{s=(s_1,...,s_r)\in(x_1,...,x_r)+C }V(\F a_1,d_1)_{s_1}\wedge\cdots \wedge V(\F a_r,d_r)_{s_r}.$$
It follows immediately that
\begin{equation*}\label{decom}
V(H_\F,d)=\bigoplus_{(x_1,...,x_r)+C\in \Z_2^r/C}V(H_\F,d,C+(x_1,...,x_r)).
\end{equation*}

Let  $cl(d)$ be a subalgebra of  $A(H_\F,d)$ generated by $a_i(0)$ with $d_i=1.$  Then  $cl(d)$ is a finite dimensional semisimple associative algebra which has one simple module if $d$ is even and two inequivalent simple modules if $d$ is odd. 
Let $K(d)=\{c\in C\mid
supp(c)=\{i_1,...,i_k\}\subset supp(d)\}$ and let $E(d)$ be a maximal subcode of $K(d)$ such that
$E(d)\subset E(d)^{\perp}.$  For $c\in K(d)$, we set $e^c=2^{k/2}a_{i_1}(0)\cdots a_{i_k}(0)\in cl(d).$   
Then $G(d)=\{{\pm }e^c\mid c\in K(d)\}$  is a finite group 
and the group algebra $\F[G(d)]=\oplus_{c\in K(d)}e^c$ is a semisimple 
subalgebra of $cl(d).$   Moreover, $A(d)=\{\pm e^c\mid c\in E(d)\}$ is a maximal abelian subgroup of $G(d).$ 

\begin{thm}\label{tt1} Let $d\in C^{\perp}$ and $(x_1,...,x_r)+C\in \Z_2^r/C.$ 

(1)  $V(H_\F,d,C+(x_1,...,x_r))$ is a direct sum of $|E(d)|$ irreducible $M_C$-modules and each irreducible  submodule is a direct sum of
$|C/E(d)|$ irreducible $T_r$-modules. Moreover, each irreducible $M_C$-submodule is determined by a character of $A(d).$

(2) Every irreducible $M_C$-module is obtained in this way.
\end{thm}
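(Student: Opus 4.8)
The plan is to reduce the problem to representation theory of the finite-dimensional semisimple algebra $cl(d)$, exploiting the fact that $M_C$ is a vertex operator subalgebra of $V(H_\F)$ and that $V(H_\F,d)$ is a $\tau(d)$-twisted $V(H_\F)$-module whose $M_C$-module structure is controlled by the action of the ``zero-mode'' Clifford algebra. First I would analyze the decomposition (\ref{decom}): each summand $V(H_\F,d,C+(x_1,\dots,x_r))$ is visibly $T_r$-stable, and under $T_r$ it splits as a sum of $|\Z_2^r/C|$-... more precisely as $2^{|d|}$ copies of various $\L(h_1,\dots,h_r)$, with the constraint $h_i=\se\iff d_i=1$ built in. The key observation is that the $a_i(0)$ with $i\in\mathrm{supp}(d)$ (equivalently the group algebra $\F[G(d)]\subset cl(d)$) act on $V(H_\F,d)$ and \emph{commute} with the action of $M_C$: indeed $a_i(0)$ is a zero mode of a field in $V(H_\F)_{\bar 1}$, hence by the (super)commutator formula and the fact that $M_C\subset V(H_\F)_{\bar 0}$, the operator $e^c$ for $c\in K(d)$ either commutes or anticommutes with each $Y(v,z)$, $v\in M_C$, and on the even part $M_C$ one checks the sign is always $+$ (this is where $d\in C^\perp$, i.e. $\tau(d)|_{M_C}=1$, enters). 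So $cl(d)$ acts on $V(H_\F,d)$ by $M_C$-module endomorphisms.

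Next I would identify the $M_C$-module $V(H_\F,d,C+(x_1,\dots,x_r))$ as an induced object: it is (up to the $T_r$-multiplicity bookkeeping already recorded) $\F[G(d)]\otimes_{\F[A(d)]}(\text{an irreducible }T_r\text{-module base})$ in an appropriate sense — concretely, fixing one coset representative, the summand is a free module over the Clifford algebra $cl(d)$ restricted to the subgroup generated by the $a_i(0)$, $i\in\mathrm{supp}(d)$, tensored with the chosen irreducible $T_r$-type. Since $E(d)$ is a maximal self-orthogonal subcode of $K(d)$, the abelian subgroup $A(d)=\{\pm e^c\mid c\in E(d)\}$ is a maximal abelian subgroup of the extraspecial-type group $G(d)$, and the irreducible $\F[G(d)]$-modules are exactly the ones induced from characters of $A(d)$; there are $|E(d)|$ of them (the characters of $A(d)$ that restrict correctly to the center $\{\pm1\}$), each of dimension $|G(d):A(d)| = |K(d)/E(d)|$. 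Pulling this back through the commuting action, $V(H_\F,d,C+(x_1,\dots,x_r))$ decomposes into $|E(d)|$ isotypic pieces indexed by characters $\chi$ of $A(d)$, and each such piece is an irreducible $M_C$-module because (i) by double-commutant / Schur-type reasoning the $M_C$-action on a single $\chi$-eigenspace is irreducible — here I would invoke simplicity of $M_C$ (Lemma \ref{codeC}/Theorem \ref{harational}) and the fusion-rule computation (Lemma \ref{tensorfusion}) showing $N_{T_r,T_r}^{T_r}=1$, so that the various $T_r$-isotypic components are glued together into a single irreducible by the intertwining operators coming from $M_C$ — and (ii) counting dimensions, each irreducible $M_C$-submodule contains $|K(d)/E(d)|\cdot|C/K(d)| = |C/E(d)|$ irreducible $T_r$-submodules, matching the claim. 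This establishes part (1).

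For part (2), given an arbitrary irreducible $M_C$-module $W$, I would first restrict to $T_r$ and use rationality of $T_r$ (Lemma \ref{tensorrational}, Theorem \ref{harational}) to write $W$ as a sum of $\L(h_1,\dots,h_r)$'s; the Miyamoto involutions $\tau_i$ act on $W$ and determine a codeword $d\in\Z_2^r$ with $h_i=\se\iff d_i=1$ on every $T_r$-submodule (using Lemma \ref{l2.3}-type constancy, adapted to $M_C$), and $d\in C^\perp$ since $\tau(d)$ acts trivially on $M_C$. Then $W$ is a $\tau(d)$-twisted... no — $W$ is an (untwisted) $M_C$-module, but the $\tau_i$-eigenvalue pattern forces $W$ to ``look like'' a submodule of some $V(H_\F,d)$. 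The cleanest route: embed $W$ into a $\tau(d)$-twisted $V(H_\F)$-module by the same induction/Clifford-algebra machinery — any irreducible $T_r$-module of the required type appears in $V(H_\F,d)$, and applying the Clifford modes $a_i(n)$ generates a twisted $V(H_\F)$-module containing it; by the classification of $\tau(d)$-twisted $V(H_\F)$-modules (Proposition 5.x: there are essentially $2^{\lfloor|d|/2\rfloor}$-ish of them, all appearing in $V(H_\F,d)$) this twisted module is a summand of $V(H_\F,d)$, whence $W$ appears among the $M_C$-submodules classified in (1). I expect the main obstacle to be part (2), specifically verifying that \emph{every} irreducible $M_C$-module actually arises inside some $V(H_\F,d)$ rather than as some exotic module not seen by the Clifford construction — over $\C$ this is handled by a weight/minimality argument (following \cite{M1}), but over $\F$ with only $p$ distinct $L(0)$-eigenvalues one cannot use minimal weights, so the argument must instead go through the twisted-module classification for $V(H_\F)$ and a careful check that the induction functor from $T_r$-type data to $V(H_\F,d)$-submodules is surjective onto irreducibles; reconciling the $\Z$-gradation subtleties (the issue flagged after Lemma \ref{l2.3}) with this induction is the delicate point.
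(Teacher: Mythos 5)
The central mechanism of your part (1) fails: the operators $e^c$, $c\in K(d)$, do \emph{not} act on $V(H_\F,d)$ by $M_C$-module endomorphisms. The supercommutator of a twisted zero mode with $Y(v,z)$ for even $v$ is a nontrivial field, not zero (the twisted commutator formula gives $\sum_{j\ge 0}$ of terms $Y(a_i(j-\tfrac12)v,z)$ up to powers of $z$, and these do not all vanish for $v\in M_C$). Concretely, take $r=2$, $C=\{(0,0),(1,1)\}$, $d=(1,1)\in C^{\perp}$, and $v=a_1(-\tfrac12)a_2(-\tfrac12)\1\in M_C$. On $V(H_\F,d)$ one has $Y(v,z)=\sum_k v_kz^{-k-1}$ with $v_k=\sum_{m+n=k}a_1(m)a_2(n)$, and $[a_1(0),Y(v,z)]=z^{-1/2}a_2(z)\ne 0$; for $e^{(1,1)}=2a_1(0)a_2(0)$ and the Fock vacuum $w$ one computes $[e^{(1,1)},v_{-1}]w=2\bigl(a_1(-1)a_1(0)+a_2(-1)a_2(0)\bigr)w\ne 0$. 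So neither $\F[G(d)]$ nor even $A(d)$ lies in the commutant of the $M_C$-action, the asserted decomposition of all of $V(H_\F,d,C+x)$ into $A(d)$-character eigenspaces that are $M_C$-submodules does not exist, and the double-commutant/Schur irreducibility argument collapses. (A secondary error: $G(d)$ has at most two irreducible modules on which $-e^0$ acts as $-1$, not $|E(d)|$; the number $|E(d)|$ counts characters of $A(d)$, many of which induce isomorphic $G(d)$-modules.) What does work—and is what the paper does—is to use the $A(d)$-action only on the top level $T(W)$ of the smaller space $W=\oplus_{t\in x+E(d)}V(\F a_1,d_1)_{t_1}\wedge\cdots\wedge V(\F a_r,d_r)_{t_r}$, pick an eigenvector $T(W)_\lambda$ there, and prove that the $M_C$-module it generates is irreducible by climbing $M_{E(d)}\subset M_{K(d)}\subset M_C$: irreducibility of $\Ind_{A(d)}^{G(d)}T(W)_\lambda$ as a $G(d)$-module \cite{FLM}, rationality of $M_{K(d)}$, and then fusion with the simple currents $M_{K(d)+s}$, $s+K(d)\in C/K(d)$, the resulting pieces being distinguished by their $\tfrac12$-position supports. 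Your multiplicity count $|K(d)/E(d)|\cdot|C/K(d)|=|C/E(d)|$ matches this, but the irreducibility and the direct-sum structure require this route, not a commutant argument.

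For part (2) you acknowledge the gap yourself, but note in addition that your proposed repair is not available as stated: an abstract irreducible $M_C$-module carries no action of the Clifford modes $a_i(n)$ (since $M_C$ is a proper subalgebra of $V(H_\F)_{\bar 0}$), so one cannot "apply the Clifford modes" to embed it into a $\tau(d)$-twisted $V(H_\F)$-module; some genuine induction or top-level/associative-algebra analysis in the style of \cite{M1}, \cite{M2}, \cite{LY} is needed, and this is exactly what the paper appeals to for (2).
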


\begin{proof}  (1) Clearly, $V(H_\F,d,C+(x_1,...,x_r))$ is an $M_C$-module.  Let $x=(x_1,...,x_r)$ and set
$$W=\bigoplus_{t=(t_1,...,t_r)\in x+E(d)}V(\F a_1,d_1)_{t_1}\wedge\cdots \wedge V(\F a_r,d_r)_{t_r}.$$ 
Note that if $d_i=0$ then $t_i=x_i.$   Also, $V(\F a_i,0)_{t_i}$ is isomorphic to $L(\ha,\frac{x_i}{2})_\F,$ and if $d_i=1,$ 
 $V(\F a_i,1)_{t_i}$ is isomorphic to $L(\ha,\se)_\F.$  This gives 
$$W\cong |E(d)|L(h_1,...,h_r)_\F$$
as $T_r$-module where $h_i=\se$ if $i\in supp(d),$ $h_i=\frac{x_i}{2}$ if $i\notin supp(d).$
It is evident that the top level $T(W)$ of  $W$ has a basis $$e^c\prod_{d_i=1,x_i=1}a_i(0)\prod_{ d_i=0, x_i=1}a_i(-1/2)$$ for $c\in E(d).$
 Moreover,
$T(W)$ is isomorphic to $\F[A(d)]/(-e^0+1)$ as $A(d)$-modules where $(-e^0+1)$ is the ideal of  $\F[A(d)]$
generated by  $-e^0+1.$  Since $A(d)$ is an abelian group, $T(W)=\oplus_{\lambda}T(W)_{\lambda}$ is a sum of irreducible  $A(d)$-modules $ T(W)_{\lambda}$ with the character $\lambda$ such that $\lambda(-e^0)=-1.$ Note that the module for the code vertex operator algebra $M_{E(d)}$ generated by $T(W)_{\lambda}$ is an irreducible $T_r$-module isomorphic to $L(h_1,...,h_r)_\F.$  We denote this  $M_{E(d)}$-module by $M_{E(d)}(x,\lambda).$

Now consider the induced module $\Ind_{A(d)}^{G(d)}T(W)_{\lambda}$ which is an irreducible $G(d)$-module by
\cite{FLM} and whose dimension is $[K(d):E(d)].$ Then the $M_{K(d)}$-module generated by  $\Ind_{A(d)}^{G(d)}T(W)_{\lambda}$  is an irreducible $M_{K(d)}$-module as $M_{K(d)}$ is rational \cite{DR1}.  Clearly,  this
 irreducible is isomorphic to $[K(d):E(d)]L(h_1,...,h_r)_\F$ as $T_r$-module.  We denote this irreducible  $M_{K(d)}$-module 
by $M_{K(d)}(x,\lambda).$ 

Let $M_C( d, x, \lambda)$ be the $M_C$-module generated by $T(W)_{\lambda}.$ Then 
$$M_C( d, x, \lambda)=\sum_{s+K(d)\in C/K(d)} M_{K(d)+s}\cdot M_{K(d)}(x,\lambda)$$
where $ M_{K(d)+s}\cdot M_{K(d)}(x,\lambda)$ is spanned by $u_nM_{K(d)}(x,\lambda)$ for $u\in  M_{K(d)+s}$ and 
$n\in \Z.$ Clearly,  $ M_{K(d)+s}\cdot M_{K(d)}(x,\lambda)$ is also the fusion product of $ M_{K(d)+s}$ with $M_{K(d)}(x,\lambda)$
as   $ M_{K(d)+s}$ is a simple current.  Moreover, 
 $$M_{K(d)+s}\cdot M_{K(d)}(x,\lambda)\cong M_{K(d)}(s+x,\lambda)$$
is an irreducible $M_{K(d)}$-module. Note that if $s+K(d)\ne t+K(d)$ for $s,t\in C$ then each irreducible 
$T^r$-module in  $M_{K(d)}(s+x,\lambda)$ and   $M_{K(d)}(t+x,\lambda)$ has different $\frac{1}{2}$ positions. This shows that
$M_C( d, x, \lambda)$ is an irreducible $M_C$-module. 

From the definitions of $V(H_\F,d, C+x)$ and $W$,  we immediately see that $V(H_\F,d, C+x)=M_C\cdot W.$ This leads to the
decomposition 
\begin{equation}\label{decom1}V(H_\F,d,C+x)=\oplus_{\lambda} M_C( d, x, \lambda).
\end{equation}

The proof of (2) is similar to that given in \cite{M1, M2, LY2} when $\F=\C.$
\end{proof}

\section{$\D$-forms}
\setcounter{equation}{0}

From now on,  we assume $\D=\Z[\frac{1}{2}].$ We will construct some $\D$-form for framed vertex operator algebras over $\C$.  The main idea is to investigate  the $\D$-forms of code vertex operator algebras and some of their  irreducible modules.  In this section,  we first construct a $\D$-form for any simple current of a code  vertex operator algebra over $\C.$  Then we will study certain intertwining operators over $\D$ in Section 12 and  construct a $\D$-form for any framed vertex operator algebra over $\C$ in Section 13.

Fix an even binary code $C\subset  \Z_2^r.$  Set $(M_{C+x})_{\D}=M_{C+x}\cap V(H_\D)$ for any $x\in \Z_2^r.$  It is clear that $(M_C)_{\D}$ is vertex operator algebra over $\D$ and $(M_{C+x})_{\D}$ is an  $(M_C)_{\D}$-module.
Clearly, $(M_{C+x})_{\D}$ is $\D$-form of $M_{C+x}$ over$(M_C)_\D.$
Recall the invariant bilinear from $(\cdot,\cdot)$ on $V(H_\D).$  We also denote
the restriction of the form to $(M_{C+x})_{\D}$ by $(\cdot,\cdot).$ We call 
$$(M_{C+x})_{\D}^*=\{u\in M_{C+x}\mid
(u,(M_{C+x})_{\D})\subset \D\}$$
 the dual of  $(M_{C+x})_{\D}$ in $M_{C+x}$ with respect to
the form. 

\begin{lem}\label{form1} The form on  $(M_{C+x})_{\D}$  is self dual. That is,  $(M_{C+x})_{\D}^*=(M_{C+x})_{\D}.$
\end{lem}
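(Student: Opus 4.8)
The plan is to exploit the fact that $V(H_\D)$ has an \emph{orthonormal} monomial basis with respect to the form $(\cdot,\cdot)$, and that the $L(\ha,0)_\F^{\otimes r}$-isotypic pieces $\L(c)$ for $c \in C$ that make up $M_C$ are mutually orthogonal. Concretely, $V(H_\C)$ decomposes as $\bigoplus_{h_i \in \{0,\ha\}} L(h_1,\dots,h_r)_\C$, and since distinct irreducible $T_r$-modules inside $V(H_\C)$ are pairwise orthogonal (eigenvectors of $L^i(0)$ with distinct eigenvalue tuples, or more simply the monomial basis vectors lying in distinct summands are orthonormal), the form restricted to $M_{C+x} = \bigoplus_{c \in C} \L(c+x)$ is the orthogonal direct sum of the forms on each $\L(c+x)$. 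So it suffices to prove the self-duality statement for a single irreducible $T_r$-module summand $\L(c+x)_\D := \L(c+x) \cap V(H_\D)$, and in fact, since $\L(c+x)_\D = (\L(\ha,\tfrac{(c+x)_1}{2}))_\D \otimes \cdots$, it reduces by the orthonormal tensor-product structure to the rank-one case: the form on $V(\F a_i)_\D$ for a single $i$.

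For the rank-one case, I would argue as follows. The space $V(\D a_i) = \wedge_\D[a_i(-n)\mid n \geq 1/2,\, n \in \ha+\Z]$ has the monomial basis $\{a_i(-n_1)\cdots a_i(-n_k) \mid n_1 > \cdots > n_k \geq 1/2\}$ (distinct $n_j$ since $a_i(-n)^2 = 0$), which is orthonormal for the defined form. Hence $V(\D a_i)$ with this form is already manifestly self-dual: the dual basis of an orthonormal $\D$-basis is itself, so $(V(\D a_i))_\D^* = (V(\D a_i))_\D$ inside $V(\D a_i)_\C$. Taking tensor products (wedge products) of finitely many such factors preserves the orthonormal-basis property, so $V(H_\D)$ is self-dual; restricting to the orthogonal summand $(M_{C+x})_\D$, which is again spanned by a subset of the same orthonormal monomial basis, gives $(M_{C+x})_\D^* = (M_{C+x})_\D$. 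The point is simply that $(M_{C+x})_\D$ is, by construction in Section 7, the $\D$-span of a set of monomials that form an orthonormal basis of the $\C$-space $M_{C+x}$, and a lattice with an orthonormal basis is unimodular.

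The one genuine subtlety — and what I expect to be the main thing to check carefully — is that the monomials spanning $(M_{C+x})_\D = M_{C+x} \cap V(H_\D)$ really do form an orthonormal $\D$-basis, i.e.\ that the intersection $M_{C+x} \cap V(H_\D)$ is exactly the $\D$-span of the monomials lying in $M_{C+x}$, with no extra denominators appearing. Over $\C$, $M_{C+x} = \bigoplus_{c \in C}\L(c+x)$ is a sum of certain $T_r$-isotypic components, and each such component is spanned over $\C$ by a subset of the monomial basis of $V(H_\C)$ (indeed $V(\F a_i)_{\bar 0} \cong L(\ha,0)$ and $V(\F a_i)_{\bar 1} \cong L(\ha,\ha)$ are each monomial-spanned by parity of length). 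Therefore $M_{C+x}$ is the $\C$-span of a subset $B$ of the orthonormal monomial basis, and $M_{C+x} \cap V(H_\D) = \D\text{-span}(B)$. Then $(M_{C+x})_\D$ has the orthonormal $\D$-basis $B$, so its dual in $M_{C+x}$ with respect to the (nondegenerate, by Proposition~\ref{bilinear}) form is spanned by the dual basis, which equals $B$; hence $(M_{C+x})_\D^* = (M_{C+x})_\D$. I would write this up in roughly that order: reduce to isotypic components by orthogonality, identify each component as monomial-spanned, conclude that $(M_{C+x})_\D$ carries an orthonormal $\D$-basis, and invoke unimodularity.
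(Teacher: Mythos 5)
Your proposal is correct and follows essentially the same route as the paper: the paper's proof likewise observes that $(M_{C+x})_\D=\bigoplus_{c\in C+x}V(\D a_1)_{\bar c_1}\wedge\cdots\wedge V(\D a_r)_{\bar c_r}$ has a $\D$-basis consisting of the monomials lying in $M_{C+x}$, which are orthonormal for the form, and concludes self-duality from that. Your extra care in checking that $M_{C+x}\cap V(H_\D)$ is exactly the $\D$-span of those monomials is just an expanded version of the paper's "from the definition" step, so there is no substantive difference.
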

\begin{proof} From the definition we see that
$$(M_{C+x})_\D=\oplus_{c=(c_1,...,c_r)\in C+x}V(\D a_1)_{\bar c_1}\wedge\cdots\wedge V(\D a_r)_{\bar c_r}$$
and $(M_{C+x})_\D$ has a $\D$-base consisting of monomials in $M_{C+x}.$ Since these monomials form an orthonormal base 
of $(M_{C+x})_\D,$ the result follows. 
\end{proof}

Next we study the $\D$-form of $M_C$-module with the  $\se$ position code $d\in C^{\perp}.$ Recall the irreducible 
$M_C$-module $M_C(d,x,\lambda)$ over $\C$ and the code $E(d)$  from Section 10. For our purpose,  we only deal with the case when $M_C(d,x,\lambda)$ is a self-dual simple current. The following result  comes from \cite[Corollary 4 and Proposition 5]{LY2}.

\begin{lem}\label{formly} Let $K(d)$ and $E(d)$ be as in Section 10. An $M_C$-module $M_C(d,x,\lambda)$ is  a simple current if and only if $E(d)$ is a self-dual subcode of $K(d)$. Furthermore, if $M_C(d,x,\lambda)$ is self-dual, then we can choose $E(d)$ to be doubly even.  
\end{lem}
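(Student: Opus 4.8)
**Proposal for proving Lemma \ref{formly} (the two assertions: simple-current criterion and the doubly-even choice of $E(d)$).**

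The plan is to reduce both statements to facts about the code vertex operator algebras $M_{K(d)}$ and $M_{E(d)}$ sitting inside $M_C$, and then invoke the fusion-rule analysis from Section \ref{sec:6} together with the results of \cite{LY}. First I would recall from the proof of Theorem \ref{tt1} that every irreducible $M_C$-module of the relevant type is induced, in stages, from an irreducible $M_{E(d)}$-module $M_{E(d)}(x,\lambda)$ determined by a character $\lambda$ of the abelian group $A(d)$: one passes $M_{E(d)}(x,\lambda) \rightsquigarrow M_{K(d)}(x,\lambda) \rightsquigarrow M_C(d,x,\lambda)$, where the first step is the $G(d)$-induction $\Ind_{A(d)}^{G(d)}$ and the second step is fusion with the simple currents $M_{K(d)+s}$ for $s+K(d)\in C/K(d)$. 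Since fusing with a simple current is an invertible operation on the set of irreducible modules, $M_C(d,x,\lambda)$ is a simple current for $M_C$ if and only if $M_{K(d)}(x,\lambda)$ is a simple current for $M_{K(d)}$ (restricted to the subgroup of $C/K(d)$-translates) — more precisely, one checks that the fusion algebra generated by all the $M_C(d,x,\lambda)$ together with $M_{C+c}$, $c\in C$, is a group exactly when the $M_{K(d)}$-module $M_{K(d)}(x,\lambda)$ has that property. This is the content of \cite[Corollary 4]{LY}: the module $M_C(d,x,\lambda)$ is a simple current precisely when the "$\se$-part" code $E(d)$ used to build it is self-dual as a subcode of $K(d)$, i.e. $E(d)=E(d)^\perp$ inside $K(d)$ (equivalently $|E(d)|^2 = |K(d)|$ and $E(d)\subseteq E(d)^\perp$, which matches the maximality of $E(d)$ already imposed in Section \ref{sec:6}).

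For the forward direction I would argue that if $E(d)$ is properly contained in $E(d)^\perp \cap K(d)$, then the induced module $\Ind_{A(d)}^{G(d)} T(W)_\lambda$ decomposes, after fusion, into something whose self-fusion contains more than one irreducible summand (because the quantum dimension, read off from the $T_r$-decomposition in Theorem \ref{tt1}(1), exceeds $1$); hence $M_C(d,x,\lambda)$ cannot be a simple current. Conversely, if $E(d)$ is self-dual in $K(d)$, then $\dim T(W)_\lambda = 1$ forces the quantum dimension of $M_C(d,x,\lambda)$ to be $1$, and combined with rationality of $M_C$ (Theorem \ref{t4.5}, via $M_C$ being a framed VOA) this gives the simple-current property. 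All of this is exactly the situation treated over $\C$ in \cite{LY}, and since we have established in Section 3 and Theorem \ref{tt1} that the fusion rules and module structure for $M_C$ over $\F$ coincide with those over $\C$, the same proof transfers verbatim; so I would simply cite \cite[Corollary 4]{LY} for the equivalence rather than reprove it.

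For the second assertion — that when $M_C(d,x,\lambda)$ is self-dual one may choose $E(d)$ doubly even — I would invoke \cite[Proposition 5]{LY}: the point is that self-duality of the module $M_C(d,x,\lambda)$ restricts the possible weights of the top level, which in turn forces the existence of a self-dual subcode $E(d)\subseteq K(d)$ all of whose codewords have weight divisible by $4$. Concretely, among the maximal self-orthogonal subcodes $E(d)$ of $K(d)$ that can be used in the construction (they are all equivalent under the action of $G(d)$/the fusion group), self-duality of the resulting module picks out those with $\wt(c)\equiv 0 \pmod 4$ for all $c\in E(d)$; one checks this by computing, as in \cite{LY}, that the conformal weight of $M_C(d,x,\lambda)$ lies in $\frac14\Z$ exactly under the doubly-even hypothesis, and self-duality forces that weight condition. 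I expect the main obstacle to be purely bookkeeping: matching our normalization of $E(d)$, $K(d)$, $A(d)$, $G(d)$ and the characters $\lambda$ to the conventions of \cite{LY} so that "Corollary 4 and Proposition 5" apply on the nose; there is no new mathematical difficulty, since the characteristic-$p$ case introduces nothing beyond what Theorem \ref{harational}, Lemma \ref{tensorfusion} and Theorem \ref{tt1} already give us.
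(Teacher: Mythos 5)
Your proposal is correct and follows essentially the same route as the paper, which simply quotes \cite[Corollary 4 and Proposition 5]{LY} for both assertions. Note only that the lemma as used in Section 7 concerns $M_C(d,x,\lambda)$ over $\C$, so the results of \cite{LY} apply directly and your extra discussion about transferring the argument to characteristic $p$ is not needed (though it does no harm).
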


Here we present a useful fact about an elementary abelian $2$-group, i.e., a finite group which all non-identity elements have order $2$.
\begin{lem}\label{2group} Let $G$ be an elementary abelian $2$-group.  
Let $k=o(G).$  Then $\C[G]$ has a basis $\{u_1,...,u_k\}\subset \D[G]$ such that each $\D u_i$ is a $\D[G]$-module.
\end{lem}

\begin{proof} We proceed by  induction on $k.$ If $k=2$ then $G=\{e,a\}.$ We can take  $u_1=e+a,$ and $u_2=e-a.$ Assume the result  for $k$ we now prove the result for $2k.$ Let $G$ be generated by $\{a_1,...,a_m\}$ such that $2^{m-1}=k.$ Let $G_1$ be the subgroup of $G$ generated by $\{a_1,...,a_{m-1}\}.$ Then $\C[G]=\C[G_1](\C e+\C a_m).$ Let $v_1,...,v_k$ be a basis
of $\C[G_1]$ such that $\D v_i$ is a $\D[G_1]$-module. Set $u_i=v_i(e+a_m)$ for $i=1,...,k$ and $u_i=v_i(e-a_m)$ for 
$i=k+1,...,2k.$ It is clear $\D u_i$ is a $\D [G]$-module.
\end{proof}

 We remark  that only numbers $0, 1, -1$ are used in the proof of Lemma \ref{2group}, so Lemma \ref{2group} is valid with $\D$ replaced by $\Z.$

We can now give our key Lemma.
\begin{lem}\label{klem} If $M_C(d,x,\lambda)$ is a simple current, then $M_C(d,x,\lambda)$ has a $\D$-form 
$M_C(d,x,\lambda)_\D$ such that $M_C(d,x,\lambda)_\D$ is an $(M_C)_\D$-module. 
\end{lem}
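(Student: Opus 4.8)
The strategy is to realize $M_C(d,x,\lambda)$ concretely as a summand inside the twisted module $V(H_\F,d)$ — but now worked over $\D=\Z[\tfrac12]$ rather than over $\C$ — and then read off a $\D$-form from the explicit monomial description. Recall from Section~6 that $M_C(d,x,\lambda)$ was constructed as the $M_C$-submodule of $V(H_\C,d,C+x)$ generated by the character-eigenspace $T(W)_\lambda$ of the top level $T(W)$, where $W=\bigoplus_{t\in x+E(d)}V(\C a_1,d_1)_{t_1}\wedge\cdots\wedge V(\C a_r,d_r)_{t_r}$, and $T(W)$ is a module for the abelian group $A(d)=\{\pm e^c\mid c\in E(d)\}$ isomorphic to $\C[A(d)]/(-e^0+1)$. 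By Lemma~\ref{formly} we may (and do) take $E(d)$ doubly even, so that the structure constants $e^c e^{c'}=\pm e^{c+c'}$ stay in $\Z$; this is the point where the hypothesis "self-dual simple current'' is used, and it guarantees $G(d)=A(d)$ so no induction from $A(d)$ to $G(d)$ is needed.

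First I would construct the $\D$-form of the top level. The group $A(d)$ (modulo the relation $-e^0=1$, i.e. passing to the quotient identifying the central $-1$ with $1$) is an elementary abelian $2$-group, so Lemma~\ref{2group} — in its integral form, using only $0,\pm1$ — produces a $\D$-basis $u_1,\ldots,u_N$ of $\C[A(d)]/(-e^0+1)\cong T(W)$ with each $\D u_i$ an $A(d)$-submodule, hence a $\lambda$-eigenspace for some character $\lambda$. Fix the $i$ with eigenvalue the prescribed $\lambda$; then $T(W)_\lambda=\C u_i$ and I set $T(W)_{\lambda,\D}:=\D u_i$. Note $u_i$ is (up to the overall normalization $2^{k/2}$ in the definition of $e^c$, which lies in $\D$) an honest $\D$-combination of monomials $e^c\prod_{d_i=1,x_i=1}a_i(0)\prod_{d_i=0,x_i=1}a_i(-1/2)$ sitting inside $V(H_\D,d)$.

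Next I would define $M_C(d,x,\lambda)_\D$ to be the $(M_C)_\D$-submodule of $V(H_\D,d)$ generated by $T(W)_{\lambda,\D}$, i.e. the $\D$-span of all $v_n u_i$ with $v\in(M_C)_\D$ and $n\in\Z$. Since the component operators of $Y(v,z)$ for $v\in V(H_\D)$ preserve $V(H_\D,d)$ (normal-ordered products of $a_i(z)$ involve only integer binomial coefficients — see the remark after \eqref{normalorder}), this is a genuine $(M_C)_\D$-submodule of the $\D$-module $V(H_\D,d)$, and by construction $M_C(d,x,\lambda)_\D\otimes_\D\C=M_C(d,x,\lambda)$. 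It remains to check that for every $n$ the graded piece $M_C(d,x,\lambda)_\D\cap V(H_\D,d)_n$ is a free $\D$-module of the correct rank, i.e. that it is a $\D$-lattice spanning the corresponding $\C$-space and is a direct summand of the ambient free $\D$-module $V(H_\D,d)_n$. For this I would exhibit an explicit $\D$-spanning set: because $M_C(d,x,\lambda)\cong\bigl(\text{number}\bigr)\cdot L(h_1,\ldots,h_r)_\C$ as a $T_r$-module with one-dimensional graded pieces in each $T_r$-isotypic component, it suffices to produce, inside each relevant irreducible $L(\tfrac12,h)_\D$-factor, a $\D$-basis obtained by applying the integral Virasoro operators $L_{-n}$ and the integral Clifford creation operators to $u_i$; the $\D$-form $L(\tfrac12,h)_\D$ of each Virasoro factor and its integral PBW-type basis come from \cite{DR2} (and are recalled in Section~3).

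**Main obstacle.** The delicate point is not the algebra of the top level — Lemma~\ref{2group} handles that cleanly — but showing that the $(M_C)_\D$-submodule generated by $u_i$ is \emph{full} in each graded degree, equivalently that no denominators beyond powers of $2$ are forced when one expresses an arbitrary $\C$-basis vector of $M_C(d,x,\lambda)_n$ in terms of $(M_C)_\D$ acting on $u_i$. I would control this by descending through the tower $M_{E(d)}\subset M_{K(d)}\subset M_C$ exactly as in the proof of Theorem~\ref{tt1}: first identify $M_{E(d)}(x,\lambda)_\D$ with the obvious monomial $\D$-span of $V(\D a_1,d_1)_{t_1}\wedge\cdots\wedge V(\D a_r,d_r)_{t_r}$ over $t\in x+E(d)$ (here self-duality makes $K(d)=E(d)$, so this step already gives the $M_{K(d)}$-form), and then build up $M_C(d,x,\lambda)_\D=\sum_{s+K(d)\in C/K(d)} (M_{K(d)+s})_\D\cdot M_{K(d)}(x,\lambda)_\D$. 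Each fusion-product step $(M_{K(d)+s})_\D\cdot(-)$ is realized inside $V(H_\D,d)$ by integral vertex operators and lands in the corresponding monomial $\D$-span, so at every stage the $\D$-module is the monomial lattice of the relevant wedge subspace, visibly free and a direct summand of $V(H_\D,d)_n$. The freeness and correct rank then follow because the monomials that span $V(H_\F,d,C+x)$ form a $\D$-basis of $V(H_\D,d,C+x)$ and the decomposition \eqref{decom1} is a decomposition into such monomial sublattices.
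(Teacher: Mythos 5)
Your construction follows essentially the same route as the paper: realize the module inside the twisted space $V(H_\D,d)$, use Lemma \ref{formly} to choose $E(d)$ doubly even, produce a top-level vector $u_i$ lying in the monomial lattice via Lemma \ref{2group}, and use the integrality of the (twisted) vertex operators of $V(H_\D)$ on $V(H_\D,d)$. The paper's only real difference is that it defines $M_C(d,x,\lambda)_\D=M_C(d,x,\lambda)\cap V(H_\D,d)$, which is automatically an $(M_C)_\D$-module, and then only needs $u_i$ to see it is nonzero, with fullness packaged in the decomposition $V(H_\D,d,C+x)=\oplus_i M_C(d,x,\lambda_i)_\D$ of Remark \ref{r0}.

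Where your write-up has a genuine gap is in the fullness argument. The hypothesis ``self-dual simple current'' (Lemma \ref{formly}) means that $E(d)$ can be chosen doubly even and self-dual as a code of length $|d|$, i.e.\ $\dim E(d)=|d|/2$; it does \emph{not} give $K(d)=E(d)$, nor $G(d)=A(d)$. In general $K(d)$ is just the subcode of $C$ supported in $supp(d)$ and may properly contain a doubly even self-dual $E(d)$, so your tower $M_{E(d)}\subset M_{K(d)}\subset M_C$ silently skips exactly the non-abelian step of Theorem \ref{tt1}, the induction $\Ind_{A(d)}^{G(d)}T(W)_\lambda$ from $E(d)$ to $K(d)$, whose integrality you never address. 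A second slip: $M_{E(d)}(x,\lambda)_\D$ is \emph{not} ``the obvious monomial $\D$-span'' of $\bigoplus_{t\in x+E(d)}V(\D a_1,d_1)_{t_1}\wedge\cdots\wedge V(\D a_r,d_r)_{t_r}$; that space $W$ splits into $|E(d)|$ irreducible constituents, one for each character of $A(d)$ sending $-e^0$ to $-1$, and $M_{E(d)}(x,\lambda)$ is only the constituent generated by $T(W)_\lambda$. The correct statement --- and the reason $\D=\Z[\frac12]$ suffices --- is that the monomial lattice is the \emph{direct sum} of the forms over all such characters, because the transition matrix from the monomial basis $\{e^c\cdots\}$ of the top level to the character basis $\{u_i\}$ of Lemma \ref{2group} has determinant $\pm 2^m$, a unit in $\D$. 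Replacing your tower by this observation (or simply defining the form as the intersection with $V(H_\D,d)$, as the paper does, and invoking Remark \ref{r0}) repairs the argument; as written, the step ``self-duality makes $K(d)=E(d)$'' is false and the fullness claim does not go through.
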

\begin{proof} From the definition,  we notice that  $V(H_\D,d)$ is a $\D$-submodule  of $V(H_\C,d).$ In fact, the monomials form
a $\D$-base of   $V(H_\D,d).$  We claim, in fact, that   $V(H_\D,d)$ is a $\D$-form of $V(H_\C,d)$ over $V(H_\D).$ It is good enough to show that for any $u\in  V(H_\D)$ and $m\in\Z,$
$u_m V(H_\D,d)\subset V(H_\D,d).$ Let $u=a_{i_1}(-n_1-1/2)\cdots a_{i_k}(-n_k-1/2)$ with $n_j\in\Z_{+}$ and we prove the claim by induction on $k.$  If $k=1$ then $Y(a_i(-n-1/2),z)=\partial_na_i(z)$ where 
$$a_i(z)=\sum_{n\in\frac{1}{2}(d_i+1)+\Z}a_i(n)z^{-n-1/2}.$$
It follows immediately that  $Y(a_i(-n-1/2),z)V(H_\D,d)\subset V(H_\D,d)[[z^{1/2}, z^{-1/2}]].$

Now assume that the claim is true for $k.$ Using the twisted Jacobi identity, we see that for any $w\in V(H_\D,d),$ 
$$(a_{i_1}(-n_1-1/2)\dots a_{i_{k+1}}(-n_{k+1}-1/2))_mw=\sum_j z_ja_{i_1}(p_j)v_{q_j}w$$
for some $z_j\in \D$ where $v=(a_{i_2}(-n_2-1/2)\cdots a_{i_{k+1}}(-n_{k+1}-1/2).$  So the claim is true for $k+1.$ One can  also use the explicit expression of $Y(u,z)$
from \cite{FFR} to  see that each component $u_m$ is a $\D$ linear combination of operators 
$a_{i_1}(m_1)\cdots a_{i_k}(m_k).$ 

Recall that $M_C(d,x,\lambda)$ is an $M_C$-submodule of $V(H_\C,d).$ Let 
$$M_C(d,x,\lambda)_\D=M_C(d,x,\lambda)\cap V(H_\D,d).$$
 Clearly, $M_C(d,x,\lambda)_\D$ is  an $(M_C)_\D$-module. It remains to show that  $M_C(d,x,\lambda)_\D$ is nonzero.  By Lemma \ref{formly},  $E(d)$ is a doubly even selfdual subcode of $K(d).$ So the weight of each codeword of $E(d)$ is a multiple of $4.$ Since  $a_i(0)a_j(0)+a_j(0)a_i(0)=\delta_{i,j},$ $A(d)$ is an abelian group such that each element has order  less than or equal to $2.$ Moreover, $G=\{e^c\mid c\in E(d)\}$ is a subgroup of $A(d)$ of index $2$ such that 
any irreducible $A(d)$-module such that $-e^0$ acts as $-1$ is an irreducible $G$-module. 

 We need to recall how  $M_C(d,x,\lambda)$ is constructed in the proof of Theorem \ref{tt1}.  In particular, the top level $T(W)$
of $W$ has a basis  $e^c\prod_{d_i=1,x_i=1}a_i(0)\prod_{ d_i=0, x_i=1}a_i(-1/2)$ for $c\in E(d).$ Clearly, $T(W)$ is isomorphic to $\C[G]$ as $G$-module
by identifying  $e^c$ with $e^c\prod_{d_i=1,x_i=1}a_i(0)\prod_{ d_i=0, x_i=1}a_i(-1/2)$  for $c\in E(d).$ From the definition of $V(H_\D,d),$ it is evident that
 $e^c\prod_{d_i=1,x_i=1}a_i(0)\prod_{d_i=0, x_i=1}a_i(-1/2)$ lies in $ V(H_\D,d)$ for $c\in E(d).$   By Lemma \ref{2group},  there is basis
$$\{u_1,....,u_k\}\subset \D[G]\prod_{i, d_i=0, x_i=1}a_i(-1/2)$$
 of $T(W)$ such that $\C u_i$ is an irreducible $G$-module. 
Let $T(W)_{\lambda}=\C u_i$ for some $i.$ Then $u_i\in M_C(d,x,\lambda)_\D.$ We conclude that $M_C(d,x,\lambda)_\D$
is a $\D$-form over $(M_C)_\D.$ 
\end{proof}

\begin{rem}\label{r0} From the proof of Lemma \ref{klem},  we see that $\C u_i$ is an irreducible $G$-module. Let $\lambda_i$ be the corresponding character. Then 
$$V(H,d,C+x)=\oplus_{i=1}^kM_C(d,x,\lambda_i)$$
and
$$V(H_\D, d,C+x)=\oplus_{i=1}^kM_C(d,x,\lambda_i)_\D.$$
\end{rem}

To prove any framed vertex operator algebra over $\C$ has a $\D$-form, we need an invariant bilinear form on $V(H_\C, d)$ and
$M_C(d,x,\lambda).$ Recall that $V(H_\D,d)$ has a monomial basis. We define a bilinear form $(\cdot,\cdot)$ on $V(H_\C, d)$ such that the monomials form an orthogonal basis. More precisely, let $u=a_{i_1}(0)\cdots a_{i_k}(0)v$ where $v$ is a monomial 
without any $a_i(0)$ and $i_p\ne i_q$ if $p\ne q.$ We define the square length of $u$ is $\frac{1}{2^k}.$ It is trivial to show that
for any $u,v\in V(H_\C,d),$ $i=1,...,r$ and $n\in\frac{1}{2}\Z,$ $(a_i(n)u,v)=(u,a_i(-n)v).$  The next result is an analogue of Proposition \ref{bilinear}.  

\begin{lem}\label{invform}  Let  $u,v\in V(H_\D,d),$ $w\in V(H_\D)_{\bar i},$ $m\in\frac{1}{2}+\Z$ and $n\in \Z.$ Then

(1) The $\D$-bilinear  $\D$-valued form  $(\cdot,\cdot)$ on $V(H_\D,d)$ is positive definite . 

(2)  $ V(H_\D,d)$ is self dual in $V(H_\C,d)$ in the sense that 
$$\{w\in V(H_\C,d)|(w, V(H_\D,d))\subset \D\}=V(H_\D,d).$$

(3) $(L(n)u,v)=(u,L(-n)v),$

(4) $\frac{L(1)^n}{n!}$ is well defined on $V(H_\D,d)$ if $n\geq 0,$

(5) The form is invariant:
$$(Y(w,z)u,v)=(u, Y(e^{zL(1)}(-1)^{\deg w+2(\deg w)^2+i}z^{-2\deg w}w,z^{-1})v).$$
\end{lem}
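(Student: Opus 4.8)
The plan is to reduce everything to the corresponding facts for the untwisted algebra $V(H_\F)$ (Proposition \ref{bilinear}), using that $V(H_\D,d)$ is built the same way as $V(H_\D)$ with only the mode sets shifted for the indices $i\in supp(d)$. For (1) and (2): by construction the monomials $a_{i_1}(-n_1)\cdots a_{i_k}(-n_k)$ with $n_j\ge 0$, $n_j\in\frac12(1+d_{i_j})+\Z$, form a $\D$-basis of $V(H_\D,d)$, and the bilinear form was \emph{defined} to make these monomials orthogonal, with the square length of $a_{i_1}(0)\cdots a_{i_k}(0)v$ (with $i_p\ne i_q$, $v$ a monomial with no zero modes) equal to $1/2^k$. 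Since $1/2\in\D$, each such square length lies in $\D$ and is a unit; hence the Gram matrix in this basis is diagonal with unit entries, so the form is positive definite (in the sense that all diagonal entries are positive powers of $1/2$) and $\D$-valued, giving (1). Statement (2) is then immediate: a vector $w=\sum c_\mu\mu$ (sum over monomials $\mu$) pairs into $\D$ against every monomial iff $c_\mu\langle\mu,\mu\rangle\in\D$ for all $\mu$, and since $\langle\mu,\mu\rangle$ is a unit in $\D$ this holds iff all $c_\mu\in\D$, i.e.\ iff $w\in V(H_\D,d)$.

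For (3), (4), (5) I would follow the template of Proposition \ref{bilinear} verbatim. First one checks $(a_i(n)u,v)=(u,a_i(-n)v)$; this is already asserted in the text just before the lemma and follows directly from the orthogonality of monomials together with the relation $[a_i(m),a_i(n)]_+=\delta_{m+n,0}$ and the combinatorics of the partial-derivative action, exactly as in part (1) of Proposition \ref{bilinear}. Then (3) follows from the explicit expression
$$L(n)=\frac12\sum_{i=1}^r\sum_{j}j:a_i(-j)a_i(j+n):,$$
where now $j$ ranges over $\frac12(1+d_i)+\Z$; applying the adjointness of each $a_i(m)$ mode and reversing the normal ordering (which only introduces signs, harmless for the form) yields $(L(n)u,v)=(u,L(-n)v)$. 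For (4): since $V(H_\D,d)$ is a twisted module, $L(-1)=\omega_1$ still acts locally nilpotently in the appropriate graded sense, so $\frac{L(-1)^n}{n!}$ is well defined on $V(H_\D,d)$; combining this with the adjointness $( \frac{L(1)^n}{n!}u,v)=(u,\frac{L(-1)^n}{n!}v)$ from (3) shows $\frac{L(1)^n}{n!}$ is well defined (its matrix coefficients against a basis are determined and finitely supported). Finally (5) is proved the same way as part (4) of Proposition \ref{bilinear}: the invariance identity holds for the generators $w=a_i(-1/2)$ by a direct check from the definition of the form and of the twisted vertex operator $Y(a_i(-1/2),z)=\sum_{n\in\frac12(d_i+1)+\Z}a_i(n)z^{-n-1/2}$, and then it propagates to all $w\in V(H_\D)$ by the generating argument of \cite{DLin}, \cite{AL}, whose commutativity and associativity inputs are available in the twisted setting by \cite{DR1}.

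The main obstacle, and the only place requiring genuine care, is the base case of (5): verifying the invariance formula $(Y(a_i(-1/2),z)u,v)=(u,Y(e^{zL(1)}(-1)^{\deg w+2(\deg w)^2+r}z^{-2\deg w}a_i(-1/2),z^{-1})v)$ for the generators when $d_i=1$, because then $a_i(-1/2)$ has weight $\deg w=1/2+\frac{1}{16}$ is not the issue — rather, $Y(a_i(-1/2),z)$ carries half-integer powers of $z$ coming from the twist, and one must track the $z^{-2\deg w}$ factor and the fractional powers consistently on both sides, checking that the shift $|d|/16$ in the grading of $V(H_\D,d)$ is absorbed correctly. Once the generator case is settled, the extension to general $w$ is routine since $V(H_\D)$ is generated by the $a_i(-1/2)$ and the propagation lemmas of \cite{DLin,AL} only use commutativity and associativity, which hold over $\D$ with $1/2\in\D$. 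I would also note that everything is stated over $\D=\Z[\frac12]$ but the argument works over any integral domain with $1/2$ invertible, matching the generality of Section 5.
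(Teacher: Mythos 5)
Parts (1)--(3) and (5) of your plan are essentially the paper's own (very terse) treatment: (1) and (2) really are immediate from the fact that the monomials are an orthogonal $\D$-basis with square lengths $2^{-k}$, which are units in $\Z[\frac12]$; (3) is done exactly as you say, from the explicit twisted expression of $L(n)$ in the modes $a_i(j)$ and the adjointness $(a_i(j)u,v)=(u,a_i(-j)v)$; and (5) is, also in the paper, just a pointer to the generator-plus-propagation argument of Proposition \ref{bilinear}(4). (Minor point on (5): in the invariance formula $\deg w$ is the degree of $w$ in the algebra $V(H_\D)$, so $\deg a_i(-\frac12)=\frac12$; the $\frac{|d|}{16}$ shift lives only in the grading of the twisted module and never enters the exponent.)

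The genuine gap is in (4). What (4) asserts is a divisibility statement over $\D=\Z[\frac12]$: that $L(1)^nu$ is divisible by $n!$ inside the lattice $V(H_\D,d)$, and $n!$ is not invertible in $\D$. You reduce this to the premise that $\frac{L(-1)^n}{n!}$ preserves $V(H_\D,d)$, justified only by the remark that $L(-1)$ ``acts locally nilpotently in the appropriate graded sense.'' That justification is both false ($L(-1)$ raises degree and is not locally nilpotent) and irrelevant (nilpotence has nothing to do with $n!$-divisibility of a $\D$-lattice). Worse, by your own parts (2) and (3) the statement ``$\frac{L(-1)^n}{n!}$ preserves $V(H_\D,d)$'' is \emph{equivalent} to the statement you are proving, so the reduction is circular unless one of the two divided-power statements is established directly. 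In the untwisted case (Proposition \ref{bilinear}(3)) the $L(-1)$ premise comes for free from the creation property $e^{zL(-1)}v=Y(v,z)\1$ together with the integrality of the untwisted vertex operator components; no such argument is available for elements of the twisted module $V(H_\D,d)$ (the operators $Y(u,z)$ for $u$ in the twisted module are only constructed in Section 8, and their $\D$-integrality, Lemma \ref{l9.1}, itself invokes that $\frac{L(-1)^n}{n!}$ preserves $V_\D$, so leaning on that would again be circular). The paper closes this by a direct computation: from $[L(1),a_i(s)]=-(s+\frac12)a_i(1+s)$ one gets $\frac{(\mathrm{ad}\,L(1))^j}{j!}a_i(-s)=\frac{(-1)^j(-s+\frac12)\cdots(-s+j-1+\frac12)}{j!}\,a_i(-s+j)$, whose coefficient is a binomial coefficient at a half-integer argument and hence lies in $\Z[\frac12]$, and then induction on the length of a monomial via $\frac{L(1)^n}{n!}\bigl(a_i(-s)v\bigr)=\sum_{j=0}^n\bigl(\frac{(\mathrm{ad}\,L(1))^j}{j!}a_i(-s)\bigr)\frac{L(1)^{n-j}}{(n-j)!}v$. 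You need this explicit divided-power estimate (or the entirely parallel one for $L(-1)$) for your proof of (4) to stand.
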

\begin{proof}  (1) and (2) are  clear from the definition of the form. 

(3)  We know from \cite{KR} that 
$$L(0)=\frac{|d|}{16}+ \frac{1}{2}\sum_{d_i=0}\sum_{j\in \frac{1}{2}+\Z}j:a_i(-j)a_i(j): +\frac{1}{2}\sum_{d_i=1}\sum_{j\in \Z}j:a_i(-j)a_i(j): $$
and 
$$L(n)= \frac{1}{2}\sum_{d_i=0}\sum_{j\in \frac{1}{2}+\Z}j:a_i(-j)a_i(j+n): +\frac{1}{2}\sum_{d_i=1}\sum_{j\in \Z}j:a_i(-j)a_i(j+n): $$
if $n\ne 0$ where the normal order  is defined as in (\ref{normalorder}).  Using the explicit expression of $L(n)$ gives
 $(L(n)u,v)=(u,L(-n)v).$

(4) A special case of the commutator formula 
$$[L(m), a_i(s)]=-(s+\frac{m}{2})a_i(m+s)$$
 gives  $[L(1), a_i(s)]=-(s+\frac{1}{2})a_i(1+s).$ So 
$$\frac{L(1)^n}{n!}a_i(-s)=\frac{(-1)^n(-s+1/2)\cdots (-s+n-1+1/2)}{n!}a_i(-s+n)\1$$
 for $s>0.$ 
Clearly, $\frac{(-s+1/2)\cdots (-s+n-1+1/2)}{n!}\in \D$ and $\frac{L(1)^n}{n!}a_i(-s)$ lies in $V(H_\D,d).$ Now assume that $\frac{L(1)^n}{n!}v$ is well defined for $v\in V(H_\D,d)$ and $u=a_i(-s)v.$ Then 
$$\frac{L(1)^n}{n!}u=\sum_{j=0}^n( \frac{(ad L(1))^j}{j!}a_i(-s))\frac{L(1)^{n-j}}{(n-j)!}v$$
as $ \frac{(ad L(1))^j}{j!}a_i(-s)=\frac{(-1)^j(-s+1/2)\cdots (-s+j-1+1/2)}{j!}a_i(-s+j)$ and $\frac{(-1)^j(-s+1/2)\cdots (-s+j-1+1/2)}{j!}\in \D.$ Since $a_j(-s+j)$ preserves $V(H_\D,d),$ $\frac{L(1)^n}{n!}v$ is an element of $V(H_\D,d).$

(5) The proof is similar to that given in Proposition \ref{bilinear} (4).  
\end{proof}

\begin{cor}\label{dform} Assume that    $M_C(d,x,\lambda)$ is a simple current. There is a positive definite $\D$-valued, invariant bilinear form on $(M_C(d,x,\lambda))_\D.$ Moreover, the dual of   $M_C(d,x,\lambda)_\D$ in $M_C(d,x,\lambda)$ is itself.
\end{cor}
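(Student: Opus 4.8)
The plan is to derive everything from Lemma \ref{invform} by restriction to the $\D$-form $M_C(d,x,\lambda)_\D=M_C(d,x,\lambda)\cap V(H_\D,d)$ produced in Lemma \ref{klem} and Remark \ref{r0}. First I would note that the restriction of $(\cdot,\cdot)$ to $M_C(d,x,\lambda)_\D$ is a $\D$-valued, positive definite bilinear form, being a subform of the positive definite $\D$-valued form on $V(H_\D,d)$ of Lemma \ref{invform}(1). Invariance is inherited as well: since $C$ is even we have $M_C\subseteq V(H_\F)_{\bar 0}$, so specializing the identity of Lemma \ref{invform}(5) to $w\in(M_C)_\D$, $u,v\in M_C(d,x,\lambda)_\D$ — and using that $M_C(d,x,\lambda)_\D$ is stable under the $(M_C)_\D$-action — gives precisely the invariance of the form for the $(M_C)_\D$-module $M_C(d,x,\lambda)_\D$.

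For the claim that the dual of $M_C(d,x,\lambda)_\D$ inside $M_C(d,x,\lambda)$ equals $M_C(d,x,\lambda)_\D$, I would first establish that the decompositions $V(H_\C,d)=\bigoplus_{y+C\in\Z_2^r/C}V(H_\C,d,C+y)$ (from \ref{decom}) and $V(H_\C,d,C+y)=\bigoplus_i M_C(d,y,\lambda_i)$ (from Theorem \ref{tt1} and Remark \ref{r0}) are orthogonal with respect to $(\cdot,\cdot)$. Orthogonality across distinct cosets is immediate: monomials lying in different cosets have distinct per-index parity patterns, hence are distinct monomials and hence orthogonal for our form. For two distinct summands inside a single coset I would use that an invariant bilinear pairing of $M_C$-modules is the same datum as an $M_C$-module map into the contragredient; since each $M_C(d,x,\lambda_i)$ is irreducible with finite-dimensional homogeneous subspaces, its contragredient is again irreducible and $M\cong M''$. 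Thus if $(\cdot,\cdot)$ restricted to $M_C(d,x,\lambda)\times M_C(d,x,\lambda_j)$ with $\lambda_j\ne\lambda$ were nonzero, it would be an isomorphism $M_C(d,x,\lambda)\cong M_C(d,x,\lambda_j)'$; taking contragredients and using the hypothesis $M_C(d,x,\lambda)'\cong M_C(d,x,\lambda)$ would force $M_C(d,x,\lambda_j)\cong M_C(d,x,\lambda)$, contradicting that their top levels carry distinct characters of $G$. Hence $M_C(d,x,\lambda)$ is orthogonal to every other summand appearing in $V(H_\C,d)$.

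Finally, if $w\in M_C(d,x,\lambda)$ satisfies $(w,M_C(d,x,\lambda)_\D)\subseteq\D$, then by the orthogonality just proved $(w,M_C(d,y,\lambda_i)_\D)=0\subseteq\D$ for every $(y,\lambda_i)\ne(x,\lambda)$; since $V(H_\D,d)=\bigoplus_{y}\bigoplus_i M_C(d,y,\lambda_i)_\D$ by Remark \ref{r0} applied to each coset, it follows that $(w,V(H_\D,d))\subseteq\D$, hence $w\in V(H_\D,d)$ by Lemma \ref{invform}(2), i.e.\ $w\in M_C(d,x,\lambda)\cap V(H_\D,d)=M_C(d,x,\lambda)_\D$. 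I expect the within-coset orthogonality — realizing the invariant form via the contragredient and combining it with the self-duality hypothesis and irreducibility — to be the only substantive step; everything else is bookkeeping with the monomial basis and the properties of $V(H_\D,d)$ already established.
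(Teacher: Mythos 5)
Your proof is correct and follows essentially the paper's own route: restrict the form of Lemma \ref{invform} to $M_C(d,x,\lambda)_\D$ to get positive definiteness and invariance, and then deduce self-duality from Lemma \ref{invform}(2) together with the decompositions (\ref{decom}) and (\ref{decom1})/Remark \ref{r0}, the only difference being that you spell out the orthogonality of the distinct summands (which the paper treats as immediate) via the contragredient argument. One small caveat: your final contradiction invokes that distinct characters give non-isomorphic $M_C$-modules, which the paper never states (Theorem \ref{tt1}(1) only says the \emph{submodule} is determined by the character); this can be avoided by observing that a nonzero invariant pairing of irreducibles is nondegenerate on corresponding graded pieces, while the explicit top-level vectors $u_i$ of Lemma \ref{klem}, being distinct $\pm 1$-valued characters of the elementary abelian $2$-group $G$ expanded in the orthogonal monomial basis, are pairwise orthogonal, so the pairing between distinct summands vanishes directly.
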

\begin{proof}  It is clear from Lemma \ref{invform} that the restriction of the form to  $M_C(d,x,\lambda)_\D$ is positive definite and invariant.  In order to prove that  $M_C(d,x,\lambda)_\D$ is self dual in $M_C(d,x,\lambda)$,  we recall the decomposition (\ref{decom}). Since the dual of $V(H_\D,d)$ in $V(H_\C,d)$ is itself by Lemma \ref{invform} (2), and each $V(H_\C,d, C+x)$ has a basis
consisting of monomial basis, we see that $V(H_\D,d,C+x)$ is self dual in  $V(H_\C,d, C+x).$ By 
(\ref{decom1})  and the proof of Lemma \ref{klem}
$$ V(H_\D,d,C+x)=\oplus_{\lambda}(M_C(d,x,\lambda))_\D.$$
This immediately implies that $M_C(d,x,\lambda)_\D$ is self dual  in $M_C(d,x,\lambda).$
\end{proof}

\section{Intertwining operators over $\D$}

 We construct  various intertwining operators associated to the vertex operator algebra $V(\C a)$
and $V(H_\D)$ and $(M_C)_\D$ over  $\D=\Z[\frac{1}2]$ in this section. These intertwining operators will be used later to construct $\D$-form
for a framed vertex operator algebra over $\C.$  

Let $V=V(\C a)\oplus V(\C a,1)$ and define a nondegenerate symmetric bilinear form $(\cdot,\cdot)$ on $V$ such that the restrictions of the form to $V(\C a)$
and $ V(\C a,1)$  are as before, and $(V(\C a),V(\C a,1))=0.$ We also set $V_{\D}=V(\D a)\oplus V(\D a,1).$

Define a linear map 
$$Y: V\to (\End V)[[z^{1/8},z^{-1/8}]]$$
such that $Y(v,z)$ for $v\in V(\C a)$ is exactly the vertex operator which defines the vertex operator subalgebra structure on $V(\C a)$
and the $ \sigma$-twisted $V(\C a)$-module structure on $V(\C a,1)$ where $\sigma=(-1)^{2L(0)}$ is the canonical  automorphism of $V(\C a).$ For $u\in V(\C a, 1)_{\bar s}$ and  $v\in V(\C a)_{\bar r},$ we define $Y(u,z)v=(-1)^{rs}e^{L(-1)z}Y(v,-z)u$.   Now assume that $v\in V(\C a,1)_{\bar r}$. We define $Y(u,z)v$  from the identity
$$(Y(u,z)v,w)=(-1)^{st}(v, Y(e^{L(1)z}(-1)^{L(0)-1/16}z^{-2L(0)}u, z^{-1})w)$$
for any $w\in V(\C a)_{\bar t}.$ Using the notation from Section 9, we also denote $V(\C a)$ by $V(\C a,0).$ 
\begin{lem}\label{l9.0} Let $d_1,d_2,s,t=0,1$ and $u\in V(\C a,d_1)_{\bar s},$ $v\in V(\C a, d_2)_{\bar t}.$ Then
$Y(u,z)v\in V(\C a, d_1+d_2)_{\overline{s+t}}[[z^{1/8},z^{-1/8}]].$ 
\end{lem}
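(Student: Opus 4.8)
The plan is to verify the claim case by case according to the values of $d_1,d_2\in\{0,1\}$, reducing everything to the three already-established structures: the vertex operator superalgebra $V(\C a)=V(\C a,0)$, its $\tau$-twisted module $V(\C a,1)$, and the intertwining-type maps that have just been defined. The key observation is that for each of the four pairs $(d_1,d_2)$, the operator $Y(u,z)v$ has already been defined (either as an honest vertex operator, or via the skew-symmetry formula $Y(u,z)v=(-1)^{rs}e^{L(-1)z}Y(v,-z)u$ when $d_1=1,d_2=0$, or via the adjoint/contragredient formula when $d_2=1$), so the content is purely to check (a) that the image lands in the right $\Z_2$-graded summand, and (b) that the half-integer powers of $z$ that actually appear are consistent with membership in $V(\C a,d_1+d_2)[[z^{1/8},z^{-1/8}]]$.

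First I would dispose of the case $d_1=d_2=0$: here $Y(u,z)v$ is the vertex operator defining the VOSA structure on $V(\C a)$, and the $\Z_2$-grading axiom $u_nv\in V_{\overline{s+t}}$ for a VOSA, together with $V(\C a)=V(\C a,0)$ and integer (or half-integer) powers of $z$, gives the claim immediately. Next, $d_1=0,d_2=1$: then $Y(u,z)v$ is the $\tau$-twisted module action on $V(\C a,1)$, which by definition sends $V(\C a,1)$ to $V(\C a,1)[[z^{1/2},z^{-1/2}]]$ times $z^{-\Delta}$ with $\Delta\in\frac{1}{16}+\frac12\Z$ (coming from the $\frac1{16}$ conformal weight of $V(\C a,1)\cong L(\ha,\se)_\C$), so powers of $z$ lie in $\frac18\Z$; and the grading behaviour of a twisted module gives $u(n)V(\C a,1)_{\bar t}\subset V(\C a,1)_{\overline{s+t}}$, which is what we want since $d_1+d_2=1$. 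For $d_1=1,d_2=0$, I use the defining skew-symmetry relation $Y(u,z)v=(-1)^{rs}e^{L(-1)z}Y(v,-z)u$: the right-hand side is $e^{L(-1)z}$ applied to the twisted module action $Y(v,-z)u$ already analysed in the previous case, $e^{L(-1)z}$ preserves both the $\Z_2$-grading and $V(\C a,1)$ and only introduces nonnegative integer powers of $z$, so $Y(u,z)v\in V(\C a,1)_{\overline{s+t}}[[z^{1/8},z^{-1/8}]]$ as needed ($d_1+d_2=1$ again).

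The remaining, and genuinely new, case is $d_1=d_2=1$, where $Y(u,z)v$ is defined by the adjoint formula $(Y(u,z)v,w)=(-1)^{st}(v,Y(e^{L(1)z}(-1)^{L(0)-1/16}z^{-2L(0)}u,z^{-1})w)$ for $w\in V(\C a)_{\bar t}$. Here the inner operator on the right is the $\tau$-twisted module action of the element $e^{L(1)z}(-1)^{L(0)-1/16}z^{-2L(0)}u\in V(\C a,1)$ on $w\in V(\C a,0)$, which by the twisted-module case produces elements of $V(\C a,1)[[z^{1/8},z^{-1/8}]]$; I then read off from nondegeneracy of the pairing between $V(\C a,0)$ and itself (and the fact that $(V(\C a),V(\C a,1))=0$) that $Y(u,z)v$ must pair nontrivially only against $V(\C a,0)$, forcing $Y(u,z)v\in V(\C a,0)[[z^{1/8},z^{-1/8}]]$; and the $\Z_2$-parity bookkeeping — $u$ has parity $\bar s$, the operations $e^{L(1)z}$, $(-1)^{L(0)-1/16}z^{-2L(0)}$ preserve parity, $w$ has parity $\bar t$, and the twisted action shifts parity by $\bar s$ — yields that $Y(u,z)v$ pairs nontrivially only with $V(\C a,0)_{\overline{s+t}}$'s complement's... more precisely, that $Y(u,z)v\in V(\C a,0)_{\overline{s+t}}$, matching $d_1+d_2=0$. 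The main obstacle I anticipate is precisely this last parity/grading tracking through the contragredient formula: one must keep careful account of the conformal-weight shift by $\frac1{16}$ (which is what makes $z$-powers land in $\frac18\Z$ rather than $\frac12\Z$ or $\Z$) and of how the sign factors $(-1)^{L(0)-1/16}$ and $(-1)^{st}$ interact with the $\Z_2$-grading so that the resulting parity comes out to $\overline{s+t}$ exactly; once that bookkeeping is done, the statement follows. All four cases together give the lemma.
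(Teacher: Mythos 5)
Your case-by-case argument is essentially the paper's own proof: the $d_1=0$ cases are immediate from the (twisted) module structure of $V(\C a,d_2)$, and the $d_1=1$ cases follow from the skew-symmetry and adjoint definitions of $Y$ together with the fact that the bilinear form pairs only equal-parity components of $V(\C a,d)$, which is exactly the parity bookkeeping you outline for the $(1,1)$ case. One small correction: in the case $(d_1,d_2)=(0,1)$ the twisted module action has powers of $z$ in $\frac{1}{2}\Z$ with no $z^{-\Delta}$ shift by $\frac{1}{16}$ (such a shift would not even lie in $\frac{1}{8}\Z$); the $\frac{1}{16}$ enters only through $z^{-2L(0)}$ in the adjoint formula for $d_1=d_2=1$, so that case is even easier than you state, and the operator appearing there is the case-$(1,0)$ operator rather than a twisted module action, though the properties you use for it are the correct ones.
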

\begin{proof} If $d_1=0,$ the conclusion is clear from the construction of $ \sigma^{d_2}$-$V(\C a)$-twisted module $V(\C a,d_2).$
For the case $d_1=1$,  we first notice that  $(V(\C a, d)_{\bar s_1}, V(\C a, d)_{\bar t_1})=0$ for any $d=0,1$ 
if $s_1\ne t_1.$ The result now follows from the  definition of $Y(u,z)v$ and the result with $d_1=0$.  \end{proof}

\begin{lem}\label{l9.1} The restriction of $Y$ to $V_{\D}$ defines a linear map
 $$V_\D \to  (\End_\D V_{\D})[[z^{1/8},z^{-1/8}]].$$
\end{lem}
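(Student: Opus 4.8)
The plan is to show that for every $u \in V_\D$ the coefficients of $Y(u,z)$, when applied to $V_\D$, land in $V_\D$ and are $\D$-linear. By Lemma \ref{l9.0} we already know the $z$-support and the $\Z_2$-grading behaviour are correct over $\C$; what remains is purely an integrality statement. First I would reduce to generators: since $V(\C a)$ is generated as a vertex algebra by $a(-1/2)$ and $V(\C a,1)$ is generated over $V(\C a)$ by its one-dimensional top level (spanned by a single monomial which lies in $V(\D a,1)$), and since the coefficients of $Y(u,z)$ for composite $u$ are obtained from those of the generators by iterated residue/normal-ordering operations with integer binomial coefficients, it suffices to check the claim when $u = a(-1/2)$ on both summands, and when $u$ runs over the top level of $V(\C a,1)$ acting on $V(\D a)$ and on $V(\D a,1)$.

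Next I would treat the three pieces. For $u \in V(\C a)$ acting on $V_\D$: the operators $Y(a(-1/2),z) = \sum a(n) z^{-n-1/2}$ (summed over $n \in \ha + \Z$ on $V(\D a)$, over $n \in \Z$ on $V(\D a,1)$) preserve the respective $\D$-lattices by the very definition of $V(H_\D,d)$ in Section 5, and all other coefficients of $Y(u,z)$ are $\D$-combinations of products $a(m_1)\cdots a(m_k)$ by the explicit formula $Y(v,z) = {:}(\partial_{n_1}b_1(z))\cdots(\partial_{n_k}b_k(z)){:}$ together with the remark there that $\binom{m}{n} \in \Z$; this is exactly the computation already carried out in the proof of Lemma \ref{klem}. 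For $u \in V(\C a,1)_{\bar s}$ acting on $v \in V(\C a)_{\bar r}$: the formula $Y(u,z)v = (-1)^{rs} e^{L(-1)z} Y(v,-z)u$ expresses this in terms of the case just handled, the integral operators $L(-1)^m/m!$ (well defined over $\D$ as $L(-1)$ has integer structure constants), and evaluation on $u$, which lies in $V(\D a,1)$; so this coefficient-wise lands in $V(\D a,1)$.

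The main obstacle — the one step needing genuine care rather than bookkeeping — is the last case: $Y(u,z)v$ for $u,v \in V(\C a,1)$, which is defined only implicitly through the invariance identity
$$(Y(u,z)v,w) = (-1)^{st}(v,\, Y(e^{L(1)z}(-1)^{L(0)-1/16}z^{-2L(0)}u,\, z^{-1})w)$$
for $w \in V(\C a)$. Here I would argue as follows: by Lemma \ref{l9.0} the output lies in $V(\C a)[[z^{1/8},z^{-1/8}]]$, so each coefficient is some $w' \in V(\C a)$; pairing against the orthonormal monomial $\D$-basis of $V(\D a)$, the coefficient $(w',w)$ is computed from the right-hand side, where $e^{L(1)z}/$ (via the well-definedness of $L(1)^n/n!$ over $\D$, Proposition \ref{bilinear}(3) and Lemma \ref{invform}(4)), the sign factor $(-1)^{L(0)-1/16}$ (a fixed integer sign on each homogeneous piece, using that $|d|=1$ here), and the substitution $z \mapsto z^{-1}$ all have $\D$-rational coefficients, applied to $u \in V(\D a,1)$, then paired via the $\D$-valued form with $w \in V(\D a)$; by self-duality of the monomial lattice (Lemma \ref{invform}(2), or directly the orthonormality of the monomial basis) this forces $w' \in V(\D a)$. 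Hence every coefficient of $Y(u,z)$ maps $V_\D$ into $V_\D$; $\D$-linearity is immediate since all these operations are $\D$-linear. This gives the asserted map $V_\D \to (\End_\D V_\D)[[z^{1/8},z^{-1/8}]]$. \qed
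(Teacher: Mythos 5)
Your argument is correct and follows essentially the same route as the paper: the case $u\in V(\D a)$ via the computation already done in the proof of Lemma \ref{klem}, the case $u\in V(\D a,1)$, $v\in V(\D a)$ via the skew-symmetry formula together with the divided powers $\frac{L(-1)^n}{n!}$ preserving $V_\D$, and the case $u,v\in V(\D a,1)$ via the invariance identity, $\D$-valuedness of its right-hand side, and self-duality of $V(\D a)$ in $V(\C a)$. Two cosmetic remarks: your opening reduction-to-generators paragraph is unnecessary (and its integrality claim for composite elements of $V(\C a,1)$ is not justified at this stage), since your later cases already treat arbitrary $u$; and the preservation of $V_\D$ by $\frac{L(-1)^n}{n!}$ really needs an induction parallel to Lemma \ref{invform}(4) rather than just "integer structure constants of $L(-1)$," a point the paper also leaves implicit.
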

\begin{proof} From the proof of Lemma \ref{klem} we know that  $Y(u,z)v\in V_\D[[z^{1/8}, z^{-1/8}]]$ for $u\in V(\D a)$ and $v\in V_{\D}.$
Since $\frac{1}{n!}L(-1)^n$ preserves $V_{\D}$ for $n\geq 0$,  we see  immediately from the definition that  $Y(u,z)v\in V_\D[[z^{1/8}, z^{-1/8}]]$ 
for  $u\in V(\D a, 1)$ and $v\in V(\D a).$
Finally we assume that $u\in V(\D a,1)_{\bar r}$ and $v\in V(\D a,1)_{\bar s}.$ Let $w\in V(\D a)_{\bar t}.$ Then
$$(Y(u,z)v,w)=(-1)^{st}(v, Y(e^{L(1)z}(-1)^{L(0)-1/16}z^{-2L(0)}u, z^{-1})w)\in \D[[z^{1/8}, z^{-1/8}]].$$
Since $V(\D a)$ is selfdual in $V(\C a)$, we conclude that   $Y(u,z)v\in V_\D[[z^{1/8}, z^{-1/8}]]$ as desired.
\end{proof}

\begin{lem}\label{la1} (1)  $Y(v,z)w$ for $v\in V(\C a, 1)$ and $w\in V(\C a)$ is an intertwining operator of type  $\left(\begin{array}{c}
V(\C a,1)\\
V(\C a,1)\ V(\C a)
\end{array}\right).$

(2)  $Y(u,z)v$ for $u\in V(\C a, 1)$ and $v\in V(\C a, 1)$ is an intertwining operator of type  $\left(\begin{array}{c}
V(\C a)\\
V(\C a,1)\ V(\C a, 1)
\end{array}\right).$
\end{lem}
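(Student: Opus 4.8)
\textbf{Proof proposal for Lemma~\ref{la1}.}
The plan is to reduce both statements to the already-established intertwining operator theory over $\C$ for the Virasoro algebra with central charge $\ha$, together with the fusion rules from Theorem~\ref{harational}. First I would recall the identifications $V(\C a,0)_{\bar 0}\cong L(\ha,0)$, $V(\C a,0)_{\bar 1}\cong L(\ha,\ha)$, and $V(\C a,1)_{\bar 0}\cong V(\C a,1)_{\bar 1}\cong L(\ha,\se)$ as modules for the Virasoro algebra generated by the components of $\omega=\frac12 a(-\frac32)a(-\frac12)$, so that $V(\C a)=L(\ha,0)\oplus L(\ha,\ha)$ and $V(\C a,1)=2\,L(\ha,\se)$ as modules for this Virasoro vertex operator algebra. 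Under this dictionary, part (1) asserts that $Y(\cdot,z)$ restricted to $V(\C a,1)\otimes V(\C a)$ is an intertwining operator of type $\binom{V(\C a,1)}{V(\C a,1)\ \ V(\C a)}$, and part (2) that $Y(\cdot,z)$ restricted to $V(\C a,1)\otimes V(\C a,1)$ is one of type $\binom{V(\C a)}{V(\C a,1)\ \ V(\C a,1)}$.

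The verification of the axioms (1)--(4) in the definition of intertwining operator in Section~2 is essentially bookkeeping: axiom (1) (truncation) and axiom (2) ($I(L(-1)v,z)=\frac{d}{dz}I(v,z)$) are immediate from the definitions of $Y(u,z)v$ (either as a twisted vertex operator, as $(-1)^{rs}e^{L(-1)z}Y(v,-z)u$, or via the invariance formula for the bilinear form). Axiom (3) (the degree condition) follows from Lemma~\ref{l9.0}, which already records that $Y(u,z)v$ lands in the correct $\Z_2$-homogeneous component with the correct shift by $\se$, together with the standard weight count. The substantive point is the twisted Jacobi identity, axiom (4). For part (1), this is exactly the statement that $V(\C a,1)$ is a $\tau$-twisted $V(\C a)$-module — which is part of the construction — once one checks that the skew-symmetry formula $Y(u,z)v=(-1)^{rs}e^{L(-1)z}Y(v,-z)u$ together with the module axiom for $Y(v,z)u$ with $v\in V(\C a)$ yields the required identity for $Y(u,z)v$ with $u\in V(\C a,1)$; this is the standard argument that skew-symmetry converts a module structure into an intertwining operator of the appropriate type, and it goes through verbatim since commutativity and associativity hold here (as noted before Lemma~\ref{dl}).

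For part (2), the key is that the invariance formula
$$(Y(u,z)v,w)=(-1)^{st}\bigl(v,\,Y(e^{L(1)z}(-1)^{L(0)-1/16}z^{-2L(0)}u,\,z^{-1})w\bigr)$$
is precisely the adjoint-intertwining-operator construction of \cite{FHL}: it produces, from the intertwining operator of type $\binom{V(\C a,1)}{V(\C a,1)\ \ V(\C a)}$ of part (1) (equivalently, from the twisted module structure on $V(\C a,1)$), an intertwining operator of type $\binom{V(\C a)'}{V(\C a,1)\ \ V(\C a,1)'}$. Since $V(\C a)$ and $V(\C a,1)$ are self-dual with respect to the nondegenerate invariant forms fixed in Section~7 (the monomials form an orthonormal, resp.\ orthogonal, basis, and Lemma~\ref{invform}(2) gives self-duality), the target and the second input can be identified with $V(\C a)$ and $V(\C a,1)$ themselves, giving exactly the type claimed. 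I would cite the symmetry of fusion rules, Lemma~2.7, and the adjoint construction of \cite{FHL}, \cite{X} to package this; the nondegeneracy of the form is what makes $Y(u,z)v$ well defined by the displayed identity in the first place, and that was already arranged in the paragraph preceding the lemma. Thus the only real content is checking that the twisted Jacobi identity is preserved under these two operations (skew-symmetry and passing to the adjoint), and both are classical manipulations valid over any integral domain $\D$ with $\frac12\in\D$ once commutativity and associativity are available.

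\textbf{Main obstacle.} The one step that requires genuine care rather than citation is confirming that the adjoint-operator identity actually defines $Y(u,z)v$ consistently with Lemma~\ref{l9.0} — i.e.\ that the power of $z$ appearing, $z^{-2L(0)}$ with the $(-1)^{L(0)-1/16}$ twist, produces only integer-plus-$\frac18$ exponents and that the resulting series has the truncation property below — and then that the twisted Jacobi identity for the adjoint operator follows from that for part (1). In the modular setting one must also be mindful that $L(0)$ has finitely many eigenvalues when $\ch\F=p$, but here everything is over $\C$ (or over $\D=\Z[\frac12]$), so this does not intervene; the characteristic-$p$ subtleties are confined to earlier sections and do not touch this lemma.
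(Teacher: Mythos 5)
Your proposal is correct and follows essentially the same route as the paper: part (1) is proved by using the skew-symmetry definition $Y(u,z)v=(-1)^{rs}e^{L(-1)z}Y(v,-z)u$ to convert the required Jacobi identity into the $\tau$-twisted module Jacobi identity for $V(\C a,1)$ (the paper just carries out the delta-function manipulation explicitly), and part (2) is exactly the FHL/Xu adjoint intertwining operator construction, which is what the paper invokes by citing the proof of Theorem 4.4 of \cite{X}. The only nitpick is that the identifications $V(\C a)'\cong V(\C a)$ and $V(\C a,1)'\cong V(\C a,1)$ should be attributed to the nondegeneracy and invariance of the bilinear forms (Proposition \ref{bilinear} and the form defined before Lemma \ref{invform}) rather than to Lemma \ref{invform}(2), which concerns $\D$-lattice self-duality.
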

\begin{proof} (1)  First the same arguments from \cite{FHL} and \cite{X}  show that $Y(L(-1)v,z)w=\frac{d}{dz}Y(v,z)w$ for
$v\in V(\C a,1)$ and $w\in V(\C a).$  It remains  to prove the following Jacobi identity for $u\in V(\C a)_{\bar r},$ $v\in V(\C a,1)_{\bar s}$ and $w\in V(\C a)_{\bar t}:$
\[
z_{0}^{-1}\left(\frac{z_1-z_2}{z_0}\right)^{r/2}\delta\left(\frac{z_{1}-z_{2}}{z_{0}}\right)Y(u,\ z_{1})Y(v,\ z_{2})w
\]
\[-(-1)^{rs}z_{0}^{-1}\left(\!\frac{-z_2\!+\!z_1}{z_0}\!\right)^{r/2}\delta\left(\frac{-z_{2}+z_{1}}{z_{0}}\right)Y(v,\ z_{2})Y(u,\ z_{1})w
\]
\[
=z_{2}^{-1}\delta\left(\frac{z_{1}-z_{0}}{z_{2}}\right)Y(Y(u,\ z_{0})v,\ z_{2})w.
\] 
From the definition, it is equivalent to show that 
\[
(-1)^{st}z_{0}^{-1}\left(\frac{z_1-z_2}{z_0}\right)^{r/2}\delta\left(\frac{z_{1}-z_{2}}{z_{0}}\right)e^{z_2L(-1)}Y(u,z_{1}-z_2)Y(w, - z_{2})v
\]
\[-(-1)^{rs+s(r+t)}z_{0}^{-1}\left(\!\frac{-z_2\!+\!z_1}{z_0}\!\right)^{r/2}\delta\left(\frac{-z_{2}+z_{1}}{z_{0}}\right)e^{z_2L(-1)}Y(Y(u, z_1)w, -z_2)v
\]
\[
=(-1)^{(r+s)t}z_{2}^{-1}\delta\left(\frac{z_{1}-z_{0}}{z_{2}}\right)e^{z_2L(-1)}Y(w,-z_2)Y(u,\ z_{0})v,
\] 
or 
\[
z_{0}^{-1}\left(\frac{z_1-z_2}{z_0}\right)^{r/2}\delta\left(\frac{z_{1}-z_{2}}{z_{0}}\right)Y(u,z_{1}-z_2)Y(w, - z_{2})v
\]
\[
-(-1)^{rt}z_{2}^{-1}\delta\left(\frac{z_{1}-z_{0}}{z_{2}}\right)Y(w,-z_2)Y(u,\ z_{0})v,
\] 
\[=z_{0}^{-1}\left(\!\frac{-z_2\!+\!z_1}{z_0}\!\right)^{r/2}\delta\left(\frac{-z_{2}+z_{1}}{z_{0}}\right)Y(Y(u, z_1)w, -z_2)v.
\]
Note that $z^{r/2}Y(u,z)$ only involves integral powers of $z.$ So 
\[
z_{0}^{-1}\left(\frac{z_1-z_2}{z_0}\right)^{r/2}\delta\left(\frac{z_{1}-z_{2}}{z_{0}}\right)Y(u,z_{1}-z_2)=z_1^{-1}\delta\left(\frac{z_{0}+z_{2}}{z_1}\right)Y(u,z_0).
\]
 Also
\[z_{0}^{-1}\left(\!\frac{-z_2\!+\!z_1}{z_0}\!\right)^{r/2}\delta\left(\frac{-z_{2}+z_{1}}{z_{0}}\right)\\
=(-z_2)^{-1}\left(\!\frac{z_0\!-\!z_1}{-z_2}\!\right)^{-r/2}\delta\left(\frac{z_0-z_{1}}{-z_2}\right)
\]
by formula (7.1) of \cite{DL}.
Thus we need to show that 
\[
z_1^{-1}\delta\left(\frac{z_{0}+z_{2}}{z_1}\right)Y(u,z_0)Y(w, - z_{2})v
\]
\[
-(-1)^{rt}z_{1}^{-1}\delta\left(\frac{z_{2}+z_{0}}{z_{1}}\right)Y(w,-z_2)Y(u,\ z_{0})v,
\] 
\[=(-z_2)^{-1}\left(\!\frac{z_0\!-\!z_1}{-z_2}\!\right)^{-r/2}\delta\left(\frac{z_0-z_{1}}{-z_2}\right)Y(Y(u, z_1)w, -z_2)v
\]
which is the Jacobi identity in the definition of $\sigma$-twisted module.

(2) Now let $u\in V(\C a)_{\bar r}$, $v\in V(\C a ,1)_{\bar s}$ and $w\in V(\C a,1).$ It follows from the proof of Theorem 4.4  of \cite{X}
that the following Jacobi identity holds:
 \[
z_{0}^{-1}\left(\frac{z_1-z_2}{z_0}\right)^{r/2}\delta\left(\frac{z_{1}-z_{2}}{z_{0}}\right)Y(u,\ z_{1})Y(v,\ z_{2})w
\]
\[-(-1)^{rs}z_{0}^{-1}\left(\!\frac{z_2\!-\!z_1}{-z_0}\!\right)^{r/2}\delta\left(\frac{z_{2}-z_{1}}{-z_{0}}\right)Y(v,\ z_{2})Y(u,\ z_{1})w
\]
\[
=z_{2}^{-1}\left(\frac{z_{1}-z_{0}}{z_{2}}\right)^{-r/2}\delta\left(\frac{z_{1}-z_{0}}{z_{2}}\right)Y(Y(u,\ z_{0})v,\ z_{2})w.
\]
Also see \cite{X} for the $L(-1)$-derivation property.
\end{proof}

Recall from Section 9 that $H=\oplus_{i=1}^r\C a_i$ is vector space with  a nondegenerate symmetric bilinear form $(,)$ such that  $\{a_i\mid i=1,...,r\}$ form an orthonormal basis. Let $d=(d_1,...,d_r), e=(e_1,...,e_r)\in \Z_2^r.$ Let $\sigma(d)$ and $\sigma(e)$ be the associated automorphisms of the vertex operator superalgebra $V(H).$ Then  $V(H,d)$ is a $ \sigma(d)$-twisted $V(H)$-module.

We now define a linear map:
$$Y: V(H,d)\to (\Hom(V(H,e),V(H,d+e))[[z^{1/8},z^{-1/8}]]$$
such that 
$Y(u^1\wedge \cdots \wedge u^r,z)=Y(u^1,z)\cdots Y(u^r,z)$
where $u^i\in V(\C a_i,d_i)$ for $i=1,...,r$ and $Y(u^i,z)$ is defined as before. The following 
result is an immediate consequence of Lemma \ref{l9.1}.
\begin{lem}\label{l9.3} The restriction of $Y$ to $V(H_{\D},d)$ defines a linear map
 $$V(H_{\D},d) \to (\Hom(V(H_\D,e),V(H_\D,d+e))[[z^{1/8},z^{-1/8}]].$$
\end{lem}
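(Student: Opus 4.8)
The plan is to bootstrap from the rank-one statement, Lemma \ref{l9.1}, using the wedge-product decomposition of all the spaces and operators involved. Recall that, as $\D$-modules,
$$V(H_\D,d)=V(\D a_1,d_1)\wedge\cdots\wedge V(\D a_r,d_r),\qquad V(H_\D,e)=V(\D a_1,e_1)\wedge\cdots\wedge V(\D a_r,e_r),$$
each carrying the obvious $\D$-basis of monomials $u^1\wedge\cdots\wedge u^r$ with $u^i$ a monomial in $V(\D a_i,d_i)$ (resp.\ $V(\D a_i,e_i)$). Since $Y(\cdot,z)$ is $\D$-linear in its first variable, it suffices to prove that $Y(v,z)w\in V(H_\D,d+e)[[z^{1/8},z^{-1/8}]]$ for $v=u^1\wedge\cdots\wedge u^r$ and $w=w^1\wedge\cdots\wedge w^r$ ranging over these monomial bases, and then extend by $\D$-linearity.

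By the definition of $Y$ on $V(H,d)$ we have $Y(u^1\wedge\cdots\wedge u^r,z)=Y(u^1,z)\cdots Y(u^r,z)$, where $Y(u^i,z)$ acts on the $i$-th tensor factor only. Applying this composite to $w^1\wedge\cdots\wedge w^r$ and moving each $Y(u^i,z)$ onto $w^i$ in the usual fashion for tensor products of vertex operator superalgebras, we obtain, up to signs lying in $\{0,\pm1\}\subset\D$,
$$Y(u^1\wedge\cdots\wedge u^r,z)(w^1\wedge\cdots\wedge w^r)=\pm\,\bigl(Y(u^1,z)w^1\bigr)\wedge\cdots\wedge\bigl(Y(u^r,z)w^r\bigr).$$
By Lemma \ref{l9.1} together with Lemma \ref{l9.0}, each factor satisfies $Y(u^i,z)w^i\in V(\D a_i,d_i+e_i)[[z^{1/8},z^{-1/8}]]$ (the addition $d_i+e_i$ taken mod $2$). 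Wedging these and using $V(\D a_1,d_1+e_1)\wedge\cdots\wedge V(\D a_r,d_r+e_r)=V(H_\D,d+e)$ gives $Y(v,z)w\in V(H_\D,d+e)[[z^{1/8},z^{-1/8}]]$, which completes the argument.

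The only point that needs verification is that the reordering of the rank-one operators past the tensor factors of $w$ contributes nothing beyond signs $\pm1$; this is the standard sign calculus for tensor (wedge) products of vertex operator superalgebras used already in Lemma \ref{tensorrational}, and it costs nothing over $\D=\Z[\frac12]$. All the genuine content sits in Lemma \ref{l9.1}, whose proof rests on the $\D$-stability of the rank-one operators and the self-duality of $V(\D a)$ inside $V(\C a)$; the present lemma is therefore immediate.
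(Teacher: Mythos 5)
Your proposal is correct and follows the same route as the paper, which simply records this lemma as an immediate consequence of Lemma \ref{l9.1} via the defining factorization $Y(u^1\wedge\cdots\wedge u^r,z)=Y(u^1,z)\cdots Y(u^r,z)$; you have merely written out the monomial-basis reduction, the Koszul signs in $\{\pm1\}\subset\D$, and the appeal to Lemma \ref{l9.0} for the target component, all of which is the intended argument.
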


Here is an extension of Lemma \ref{la1}:
\begin{lem}\label{l9.4} The linear map 
$$Y: V(H,d)\to (\Hom(V(H,e),V(H,d+e))[[z^{1/8},z^{-1/8}]]$$
defines  an intertwining operator of type  $\left(\begin{array}{c}
V(H,d+e)\\
V(H,d)\ V(H,e)
\end{array}\right).$
\end{lem}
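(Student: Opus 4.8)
The plan is to reduce the claim about the $r$-fold tensor construction to the rank-one case, which has already been established in Lemma~\ref{la1}. Recall that $H=\oplus_{i=1}^r \C a_i$ with the $a_i$ orthonormal, so that $V(H,d)=V(\C a_1,d_1)\wedge\cdots\wedge V(\C a_r,d_r)$ and the operator $Y(u^1\wedge\cdots\wedge u^r,z)=Y(u^1,z)\cdots Y(u^r,z)$ is, factor by factor, one of the three types of maps treated in Lemma~\ref{la1}: for the index $i$ with $d_i=0$ it is the ordinary vertex operator of $V(\C a_i)$ (or the twisted-module action on $V(\C a_i,e_i)$), and for the index $i$ with $d_i=1$ it is the intertwining operator of type $\binom{V(\C a_i,1)}{V(\C a_i,1)\,V(\C a_i)}$ or $\binom{V(\C a_i)}{V(\C a_i,1)\,V(\C a_i,1)}$ depending on $e_i$. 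The key structural point is that operators coming from distinct tensor factors commute up to the sign dictated by the $\Z_2$-grading (since $a_i(m)$ and $a_j(n)$ anticommute for $i\ne j$ on the fermionic pieces and commute otherwise); this is exactly the standard fact behind Lemma~\ref{tensorfusion} and Lemma~\ref{tensorrational}.

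First I would verify the $L(-1)$-derivative property: since $\omega=\omega_1+\cdots+\omega_r$ and $L(-1)=\sum_i L_i(-1)$ with $L_i(-1)$ acting only on the $i$-th factor, the Leibniz rule gives $Y(L(-1)v,z)=\frac{d}{dz}Y(v,z)$ from the rank-one statement in Lemma~\ref{la1} applied to each factor. Next I would verify grading/lower-truncation conditions (1) and (3) in the definition of intertwining operator: these follow immediately from the corresponding rank-one facts, because the grading on $V(H,d)$ is the sum of the gradings on the factors and $\deg(u^1\wedge\cdots\wedge u^r)=\sum_i\deg u^i$, with the appropriate shift $\frac{|d|}{16}=\sum_i\frac{d_i}{16}$ absorbed into the $\lambda$-shifts. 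The main work is the twisted Jacobi identity, axiom~(4). Here the strategy is the usual one: establish commutativity and associativity for the map $Y$ on $V(H,d)$ by multiplying out over the $r$ factors. For each fixed pair of indices $(i,j)$ with $i\ne j$, operators from factor $i$ and factor $j$ super-commute, so the only genuine analytic content sits within a single factor, where it is supplied by Lemma~\ref{la1}(1) and (2) (and by the twisted-module axioms for the $d_i=0$ factors). Since, over a field, commutativity plus associativity (plus the $L(-1)$-property and truncation) is equivalent to the twisted Jacobi identity—this is the standard equivalence invoked already for Lemma~\ref{dl}—we conclude that $Y$ is an intertwining operator of the stated type.

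The main obstacle I anticipate is bookkeeping the signs. The fermionic tensor factors force a Koszul sign whenever one moves an odd-degree component past another, and these signs must be tracked through (a) the definition of $Y(u^1\wedge\cdots\wedge u^r,z)$ as an ordered product, (b) the super-commutativity of operators from distinct factors, and (c) the $(-1)^{st}$ and $(-1)^{rs}$ factors already present in the rank-one Jacobi identities of Lemma~\ref{la1}. The cleanest way to handle this is to reduce to the case where each $u^i$ is $\Z_2$-homogeneous, define all products with the normal-ordering sign convention \eqref{normalorder} already in force on $V(H,d)$, and check that the accumulated sign on the left side of the Jacobi identity matches $(-1)^{\tilde u\,\tilde v}$ for homogeneous $u\in V(H,d)$, $v\in V(H,e)$. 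Once homogeneity is assumed and the sign convention is fixed, the identity factorizes cleanly and the verification is routine. I would state this sign reduction explicitly and then invoke Lemma~\ref{la1} factorwise, rather than rederiving the Jacobi identity from scratch.

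\begin{proof}
By linearity we may assume $u=u^1\wedge\cdots\wedge u^r$ with each $u^i\in V(\C a_i,d_i)$ homogeneous in the $\Z_2$-grading, and similarly for elements of $V(H,e)$; all products below are taken with the normal-ordering sign convention \eqref{normalorder}. The $L(-1)$-derivative property $Y(L(-1)v,z)=\frac{d}{dz}Y(v,z)$ follows from $L(-1)=\sum_i L_i(-1)$, the fact that $L_i(-1)$ acts only on the $i$-th tensor factor, the Leibniz rule, and Lemma~\ref{la1}(1) together with the twisted-module axioms for the $d_i=0$ factors. Properties (1) and (3) in the definition of intertwining operator are inherited factorwise from Lemma~\ref{la1} and the twisted-module structure, using that the $\frac{1}{2}\Z$-grading of $V(H,d)$ is the sum of the gradings of its factors with the shift $\frac{|d|}{16}=\sum_i\frac{d_i}{16}$.

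It remains to establish axiom (4), the twisted Jacobi identity. Since $\D=\C$ is a field, it suffices to prove commutativity and associativity for $Y$; the twisted Jacobi identity then follows by the standard equivalence (the same one used in Lemma~\ref{dl}). For commutativity and associativity we multiply out over the $r$ tensor factors. Operators arising from distinct factors super-commute: on $V(\C a_i,d_i)\wedge V(\C a_j,d_j)$ the modes $a_i(m)$ and $a_j(n)$ anticommute when both act on odd components and commute otherwise, and every component operator of $Y(u^i,z)$ is a $\C$-linear combination of products of such modes. Hence the only analytic content is internal to a single factor, where commutativity and associativity are provided by Lemma~\ref{la1}(1),(2) for $d_i=1$ and by the $\tau(d_i)$-twisted $V(\C a_i)$-module axioms for $d_i=0$. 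Combining these across the factors, and checking that the accumulated Koszul sign on the left-hand side of the Jacobi identity equals $(-1)^{\tilde u\,\tilde v}$ for homogeneous $u\in V(H,d)$, $v\in V(H,e)$ — which it does, since the rank-one signs $(-1)^{st}$, $(-1)^{rs}$ of Lemma~\ref{la1} multiply to give precisely the parity product over the factors — we obtain commutativity and associativity for $Y$. Therefore $Y$ is an intertwining operator of type $\left(\begin{array}{c}V(H,d+e)\\ V(H,d)\ V(H,e)\end{array}\right)$.
\end{proof}
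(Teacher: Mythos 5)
Your proposal is correct and follows essentially the same route as the paper: reduce the twisted Jacobi identity to commutativity and associativity, factorize $Y$ over the $r$ tensor factors using super-commutativity of operators from distinct factors, and supply the single-factor content from Lemma~\ref{la1} (together with the twisted-module axioms on the untwisted factors), with the same sign bookkeeping. The only cosmetic difference is that you also spell out the $L(-1)$-derivative and truncation/grading axioms, which the paper treats as routine.
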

\begin{proof} We only prove the following Jacobi identity 
 for any $u\in V(H)^{(j_1,j_2)}_{\bar s},\ v\in V(H,d)_{\bar t}$
\[
	z_{0}^{-1}\left(\frac{z_1-z_2}{z_0}\right)^{j_1/2}\delta\left(\frac{z_{1}-z_{2}}{z_{0}}\right)Y(u,\ z_{1})Y(v,\ z_{2})
\]
\[
-(-1)^{s+t}z_{0}^{-1}\left(\!\frac{-z_2\!+\!z_1}{z_0}\!\right)^{j_1/2}\delta\left(\frac{-z_{2}+z_{1}}{z_{0}}\right)Y(v,\ z_{2})Y(u,\ z_{1})
\]
\[=z_{2}^{-1}\left(\frac{z_1-z_0}{z_2}\right)^{-j_2/2}\delta\left(\frac{z_{1}-z_{0}}{z_{2}}\right)Y(Y(u,\ z_{0})v,\ z_{2}).
\]
where $V(H)^{(j_1,j_2)}$ is the common eigenspace for $\sigma(d), \sigma(e)$ with eigenvalues 
$(-1)^{j_1}, (-1)^{j_2}$ respectively.  

From \cite{DL}, we know that the Jacobi identity is equivalent to
the commutativity
$$(z_1-z_2)^{j_1/2+n}Y(u,z_1)Y(v,z_2)=(-z_2+z_1)^{j_1/2+n}Y(v,z_1)Y(u,z_2)
$$
for some positive integer $n,$ and the associativity
$$	(z_0+z_2)^{j_2/2+m}Y(u,z_0+z_2)Y(v,z_2)w=(z_2+z_0)^{j_2/2+m}Y(Y(u,z_0)v,z_2)w
$$
where $w\in V(H,e)$ and  nonnegative integer $m$ which depends on $u$ and $w$ only.
Note  that 
$$Y(u^1\wedge\cdots \wedge u^r,z)=Y(u^1,z)\cdots Y(u^r,z)$$
for $u^i\in V(\C a_i,x_i)$ and $x_i=0,1$ and that $Y(u^i,z)Y(u^j,z)=(-1)^{st}Y(u^j,z)Y(u^i,z)$
if $i\ne j$ and $u^i\in V(\C a_i,x_i)_{\bar s}$ and  $u^j\in V(\C a_j,x_j)_{\bar t}.$ 
Using the Jacobi identity obtained in the proof of Lemma \ref{la1} gives 
the commutativity and associativity.
\end{proof}

We now fix an even binary code $C\subset \Z_2^r$ and assume that $e,d\in C^{\perp}.$  
\begin{lem}\label{l9.5} Let $x,y\in \Z_2^r.$ Then the restriction of $Y$ to $V(H,d,C+x)\subset V(H,d)$ gives an intertwining operator of type
 $\left(\begin{array}{c}
V(H,d+e, C+x+y)\\
V(H,d, C+x)\ V(H,e,C+y)
\end{array}\right)$
for the  vertex operator algebra $M_C.$
Moreover, the restriction of $Y$ to $V(H_\D,d, C+x)$ gives a linear map
$$Y:  V(H_\D,d, C+x)\to (\Hom(V(H_\D,e,C+y), V(H_\D,d+e, C+x+y))[[z^{1/8}, z^{-1/8}]].$$
\end{lem}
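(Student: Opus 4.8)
The statement has two parts, and both follow by restricting structures already constructed in Lemma \ref{l9.4} to the graded pieces indexed by cosets of $C$ and to the $\D$-form $V(H_\D,d)$. My plan is to reduce everything to the intertwining operator property established in Lemma \ref{l9.4} for the full spaces $V(H,d)$, together with the decomposition \eqref{decom} into coset pieces.

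First I would prove the statement over $\C$. By Lemma \ref{l9.4}, $Y$ is an intertwining operator of type $\left(\begin{array}{c}V(H,d+e)\\ V(H,d)\ V(H,e)\end{array}\right)$ for the vertex operator superalgebra $V(H)$; since $M_C$ is a vertex operator subalgebra of $V(H)_{\bar 0}$ (and $d,e\in C^\perp$ guarantee that $V(H,d)$, $V(H,e)$ and $V(H,d+e)$ are genuine $M_C$-modules, as noted in Section \ref{sec:6}), the restriction of $Y$ is automatically an intertwining operator of the appropriate type for $M_C$. The only thing to check is that $Y$ maps $V(H,d,C+x)$ into $V(H,d+e,C+x+y)$ when the second input lies in $V(H,e,C+y)$. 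This is a $\Z_2^r$-grading (equivalently, $\Z_2^r/C$-grading) statement: I would observe that $Y(u^1\wedge\cdots\wedge u^r,z)=Y(u^1,z)\cdots Y(u^r,z)$ with $u^i\in V(\C a_i,d_i)$, and by Lemma \ref{l9.0}, each factor $Y(u^i,z)$ sends $V(\C a_i,e_i)_{\overline{t_i}}$ into $V(\C a_i,d_i+e_i)_{\overline{s_i+t_i}}[[z^{1/8},z^{-1/8}]]$. Taking the tensor (wedge) product over $i$, this shows $Y$ shifts the $\Z_2^r$-parity vector $s=(s_1,\dots,s_r)$ by the parity vector of $u$; since $V(H,d,C+x)$ is spanned by monomials whose parity vector lies in $x+C$ and similarly for $V(H,e,C+y)$, the output has parity vector in $(x+C)+(y+C)=x+y+C$. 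Hence $Y$ restricts to the claimed map on coset pieces, which is the first assertion.

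Second, for the $\D$-form claim, I would invoke Lemma \ref{l9.3}: the restriction of $Y$ to $V(H_\D,d)$ already takes values in $(\Hom(V(H_\D,e),V(H_\D,d+e)))[[z^{1/8},z^{-1/8}]]$. Intersecting with the coset-grading statement just proved, and using that $V(H_\D,d,C+x)=V(H,d,C+x)\cap V(H_\D,d)$ (the monomials in $V(H,d,C+x)$ form a $\D$-basis), I get that $Y$ restricts to a linear map $V(H_\D,d,C+x)\to(\Hom(V(H_\D,e,C+y),V(H_\D,d+e,C+x+y)))[[z^{1/8},z^{-1/8}]]$, as required. Concretely, since each component $u_m$ of $Y(u,z)$ for a monomial $u$ is, by the explicit formula from \cite{FFR} used in the proof of Lemma \ref{klem}, a $\D$-linear combination of operators $a_{i_1}(m_1)\cdots a_{i_k}(m_k)$, it preserves the $\D$-span of monomials; and it preserves the coset grading by the argument of the previous paragraph.

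The one point requiring genuine (if light) care—and the main obstacle—is verifying that $V(H,d,C+x)$, $V(H,e,C+y)$, and $V(H,d+e,C+x+y)$ really are $M_C$-modules, i.e.\ that $M_C$ preserves each coset piece. This is exactly the case $d=0$ (resp.\ $e=0$) of the grading computation above: $M_C=\oplus_{c\in C}\Lambda(c)\subset V(H,0,C)$ sits in the trivial coset, so $Y(M_C,z)$ shifts the parity vector by an element of $C$, hence preserves each coset $C+x$. Once this is in place, everything else is bookkeeping with the decomposition \eqref{decom} and the already-established Lemmas \ref{l9.0}, \ref{l9.3}, \ref{l9.4}. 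I would present the proof in exactly that order: first recall that $M_C$ respects the coset grading, then restrict Lemma \ref{l9.4} to obtain the $M_C$-intertwining operator between coset pieces, then intersect with Lemma \ref{l9.3} to descend to $\D$.
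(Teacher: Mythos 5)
Your proposal is correct and follows essentially the same route as the paper, whose proof simply cites Lemmas \ref{l9.0}, \ref{l9.3} and \ref{l9.4}; your write-up just makes explicit the coordinatewise parity bookkeeping from Lemma \ref{l9.0} (via the factorization $Y(u^1\wedge\cdots\wedge u^r,z)=Y(u^1,z)\cdots Y(u^r,z)$) that shows the coset grading by $\Z_2^r/C$ is respected, the restriction of the Lemma \ref{l9.4} intertwining operator to the subalgebra $M_C$, and the intersection with the $\D$-form statement of Lemma \ref{l9.3}.
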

\begin{proof} The result is an immediate consequence of Lemmas \ref{l9.0}, \ref{l9.3} and
 \ref{l9.4}.
\end{proof}

We now assume that $M_C(d,x,\lambda),$ $M_C(e,y,\mu)$ and $M_C(d+e,x+y,\nu)$
 are simple currents with integral conformal weights such that  the space of intertwining operators of type $\left(\begin{array}{c}
 M_C(d+e, x+y, \nu)\\
 M_C(d, x,\lambda)\ M_C(e,y,\mu)
 \end{array}\right)$ is 1-dimensonal.
  \begin{cor}\label{c9.7} 
  	There  is a nonzero intertwining operator of type
 $$\left(\begin{array}{c}
M_C(d+e, x+y, \nu)\\
M_C(d, x,\lambda)\ M_C(e,y,\mu)
\end{array}\right)$$
 such that  for any $u\in M_C(d,x,\lambda)_\D$,  $v\in M_C(e,y,\mu)_\D$ and  $n\in \Z,$
$u_nv\in M_C(d+e,x+y,\nu)_\D.$ 
\end{cor}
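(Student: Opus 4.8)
The plan is to deduce Corollary \ref{c9.7} directly from Lemma \ref{l9.5} together with the module-decomposition results of Section 6 and the $\D$-form results of Section 7. First I would recall that, by Lemma \ref{l9.5}, restricting $Y$ to $V(H,d,C+x)$ gives an intertwining operator of type $\left(\begin{array}{c} V(H,d+e,C+x+y)\\ V(H,d,C+x)\ V(H,e,C+y)\end{array}\right)$ for $M_C$. Now by (\ref{decom1}) and Remark \ref{r0} we have $V(H,d,C+x)=\oplus_{i} M_C(d,x,\lambda_i)$, and similarly for $V(H,e,C+y)$ and $V(H,d+e,C+x+y)$. Since an intertwining operator restricted to a direct summand of the first slot and followed by projection to a direct summand of the third slot is again an intertwining operator, the restriction of $Y$ to $M_C(d,x,\lambda)$, composed with the projection onto $M_C(d+e,x+y,\nu)$, lies in $I\left(\begin{array}{c} M_C(d+e,x+y,\nu)\\ M_C(d,x,\lambda)\ M_C(e,y,\mu)\end{array}\right)$. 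I would then argue that no projection onto the third slot is actually needed: because $M_C(d,x,\lambda)$, $M_C(e,y,\mu)$ and $M_C(d+e,x+y,\nu)$ are self-dual simple currents, the fusion product $M_C(d,x,\lambda)\times M_C(e,y,\mu)$ is a single irreducible $M_C$-module, which must be $M_C(d+e,x+y,\nu)$ by comparing the $\frac{1}{16}$-position code ($d+e$) and the $\frac{1}{2}$-position coset ($C+x+y$) determined by the $T_r$-decomposition; in particular $u_n v\in V(H,d+e,C+x+y)$ already lands inside $M_C(d+e,x+y,\nu)$ and is zero in the other summands. (If one prefers, one can instead simply define $\nu$ to be the character such that $M_C(d+e,x+y,\nu)$ contains $Y(u,z)v$; the statement is phrased for a given $\nu$, so the comparison of position codes is the cleanest way to identify it.)

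For the integrality part, I would invoke Lemma \ref{l9.5} a second time: the restriction of $Y$ to $V(H_\D,d,C+x)$ maps into $\Hom(V(H_\D,e,C+y),V(H_\D,d+e,C+x+y))[[z^{1/8},z^{-1/8}]]$. Hence for $u\in M_C(d,x,\lambda)_\D\subset V(H_\D,d,C+x)$ and $v\in M_C(e,y,\mu)_\D\subset V(H_\D,e,C+y)$ we get $u_n v\in V(H_\D,d+e,C+x+y)$ for all $n\in\Z$. On the other hand, by the first part $u_n v\in M_C(d+e,x+y,\nu)$. Now by Remark \ref{r0} (or Corollary \ref{dform}), $V(H_\D,d+e,C+x+y)=\oplus_i M_C(d+e,x+y,\nu_i)_\D$ and this $\D$-form decomposition is compatible with the $\C$-decomposition, so
$$M_C(d+e,x+y,\nu)\cap V(H_\D,d+e,C+x+y)=M_C(d+e,x+y,\nu)_\D.$$
Therefore $u_n v$ lies in the intersection, which is exactly $M_C(d+e,x+y,\nu)_\D$, as claimed.

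The main obstacle I anticipate is the bookkeeping needed to justify that the $\D$-form decomposition in Remark \ref{r0} really is the intersection of the $\C$-decomposition with $V(H_\D,d+e,C+x+y)$ summand by summand — that is, that the various $M_C(d+e,x+y,\nu_i)_\D$ are $\D$-form complements of one another inside $V(H_\D,d+e,C+x+y)$. This is where the explicit construction in the proof of Lemma \ref{klem} is used: the basis $\{u_1,\dots,u_k\}\subset\D[G]\prod a_i(-1/2)$ of the top level $T(W)$ produces, for each $i$, a $\D$-spanning set of $M_C(d+e,x+y,\nu_i)_\D$ by applying the $\D$-integral mode operators (which preserve $V(H_\D,d+e)$ by the same proof), and these together span $V(H_\D,d+e,C+x+y)$ over $\D$ with the correct ranks. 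Once this compatibility is in hand, the argument above is purely formal. Everything else is an immediate consequence of Lemmas \ref{l9.0}, \ref{l9.3}, \ref{l9.4}, \ref{l9.5} and Theorem \ref{tt1}.
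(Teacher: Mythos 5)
Your overall route is the same as the paper's: restrict the operator from Lemma \ref{l9.5}, use the decomposition of Remark \ref{r0}, and read off integrality from $V(H_\D,d+e,C+x+y)=\oplus_i M_C(d+e,x+y,\nu_i)_\D$. The one step that does not hold up is your claim that no projection onto the third slot is needed, justified ``by comparing the $\frac{1}{16}$-position code and the $\frac{1}{2}$-position coset.'' Those invariants cannot do this job: every summand $M_C(d+e,x+y,\nu_i)$ of $V(H,d+e,C+x+y)$ has the same $\frac{1}{16}$-positions $d+e$ and the same $\frac{1}{2}$-coset $C+x+y$ (by Theorem \ref{tt1} they are even isomorphic as $T_r$-modules); they are distinguished only by the character $\nu_i$ of $A(d+e)$. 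So this comparison neither identifies $\nu$ nor rules out that the irreducible image $W=\mathrm{span}\{u_nv\}$ sits diagonally across several summands of the chosen decomposition; to exclude that you would need, for instance, that summands with distinct characters are mutually non-isomorphic as $M_C$-modules (say by comparing eigenvalues of the degree-preserving modes of the vectors $a_{i_1}(-1/2)\cdots a_{i_k}(-1/2)$, $c\in E(d+e)$, on top levels), which you do not prove. Your fallback, ``define $\nu$ to be the character such that $M_C(d+e,x+y,\nu)$ contains $Y(u,z)v$,'' presupposes exactly the containment at issue.

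The paper sidesteps this point: it only asserts that the projection of the irreducible image $W$ onto some summand $W_3=M_C(d+e,x+y,\nu)$ from Remark \ref{r0} is an isomorphism, and takes the intertwining operator of the stated type to be the restriction followed by that projection. Your integrality argument then needs a small repair, since as written it invokes the unestablished containment $u_nv\in M_C(d+e,x+y,\nu)$: instead, observe that the projection is taken along the decomposition of Remark \ref{r0}, which over $\D$ reads $V(H_\D,d+e,C+x+y)=\oplus_i M_C(d+e,x+y,\nu_i)_\D$, so it carries $V(H_\D,d+e,C+x+y)$ onto $M_C(d+e,x+y,\nu)_\D$ and the $\D$-statement follows for the projected operator. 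This is exactly the compatibility you flagged at the end, and it is immediate from Lemma \ref{klem} and Remark \ref{r0}, each $M_C(d+e,x+y,\nu_i)_\D$ being by construction the intersection of the corresponding $\C$-summand with $V(H_\D,d+e)$. With that adjustment your argument coincides with the paper's.
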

\begin{proof}  By  Remark \ref{r0}, we can assume that $W_1=M_C(d,x,\lambda)$ is an irreducible $M_C$-submodule of $V(H,d,C+x),$ $W_2=M_C(e,y,\mu)$ is an irreducible $M_C$-submodule of  $V(H,e,C+y).$ 
Let $Y$ be the intertwining operator given  in Lemma \ref{l9.5}. Then the span of $u_nv$ for $u\in W_1$, $v\in W_2$ and $n\in\Z$ is an irreducible $M_C$-submodule $W$ of
$V(H,d+e,C+x+y).$  Moreover, if $u\in (W_1)_\D, v\in (W_2)_\D$ then $u_nv\in M_C(H_\D, d+e, C+x+y, \nu).$
Again by Remark  \ref{r0}, there exists an irreducible $M_C$-submodule $W_3$ of $M_C(H, d+e, C+x+y, \nu)$
isomorphic to  $M_C(d+e, x+y, \nu)$ such that the projection $P$ of $W$ to $W_3$ is an $M_C$-module isomorphism. Clearly, the composition $Y_1=P\circ Y$ is  a nonzero  intertwining operator 
of type 
 $\left(\begin{array}{c}
M_C(d+e, x+y, \nu)\\
M_C(d, x,\lambda)\ M_C(e,y,\mu)
\end{array}\right).$ 
Using Remark \ref{r0}, we immediately see that if  $u\in M_C(d,x,\lambda)_\D$,   $v\in M_C(e,y,\mu)_\D$ and $n\in \Z$, 
then $u_nv\in M_C(d+e,x+y,\nu)_\D.$ 
\end{proof}

\section{The $\D$-forms of FVOAS}

In this section, we present another main result of this paper. Namely, any framed vertex operator algebra over $\C$ has a $\D$-form, where $\D=\Z[\frac{1}{2}]$.
As a result, we can obtain a framed vertex operator algebra over any algebraically closed field whose characteristic is different from $2, 7$ from any framed vertex operator algebra over $\C.$  

Let $V$ be a framed vertex operator algebra over $\C.$ It follows from \cite{DGH} that 
$$V=\oplus_{d\in D}V^{d}$$
and $V^{0}=M_C$ is a code vertex operator algebra for some binary even codes $C,D\subset \Z_2^r$ with $D\subset C^{\perp}.$ It follows from \cite{DGH}, \cite{LY2}, \cite{DJX} that each $V^d$ is a simple current for the vertex operator algebra $M_C.$ According to \cite{LY2}, we know that for each $d\in D,$ $V^d$ is isomorphic to some $M_C(d,x(d), \lambda(d))$
for some $x(d)=(x(d)_1,...x(d)_r)\in \Z_2^r$  and $\lambda(d).$ For short we will denote $M_C(d,x(d), \lambda(d))$
by $M_C(d,x(d))$ in what follows. We will use ${\cal Y}$ to denote the intertwining operators
defined in Corollary \ref{c9.7}.  

\begin{lem}\label{fform1} If $V$ is a framed vertex operator algebra over $\C$ such that $D(V)$ is isomorphic to $\Z_2,$ then $V$ has a $\D$-form $V_{\D}.$ 
\end{lem}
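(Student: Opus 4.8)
The plan is to set $V_\D = (M_C)_\D\oplus N_\D$, where $(M_C)_\D$ is the $\D$-form of $M_C = V^0$ constructed in Section 7 and $N_\D$ is a suitably normalized $\D$-form of $V^{d_0}$. Since $D(V)\cong\Z_2$ we write $D(V) = \{0, d_0\}$, so that $V = M_C\oplus V^{d_0}$ and, by \cite{LY}, $V^{d_0}$ is isomorphic as an $M_C$-module to some $M_C(d_0, x, \lambda)$ in the notation of Theorem \ref{tt1} (with $x = x(d_0)$, $\lambda = \lambda(d_0)$), a self-dual simple current. I would fix an $M_C$-module isomorphism $\phi\colon M_C(d_0, x, \lambda)\to V^{d_0}$; by Lemma \ref{klem} the source carries a $\D$-form $M_C(d_0, x, \lambda)_\D$ over $(M_C)_\D$, and I set $N_\D = \phi\big(M_C(d_0, x, \lambda)_\D\big)$. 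As a module isomorphism commutes with $L(0)$, $N_\D$ is a graded $\D$-form of $V^{d_0}$, and this stays true when $\phi$ is rescaled by any nonzero complex scalar — a freedom exploited below. Since $\1, \omega\in (M_C)_\D\subseteq V_\D$ and $V_\D\cap V_n = \big((M_C)_\D\cap (M_C)_n\big)\oplus\big(N_\D\cap (V^{d_0})_n\big)$ is free over $\D$ of rank $\dim V_n$, the claim reduces to proving that $V_\D$ is closed under all component operators $u_k$, $u\in V_\D$, $k\in\Z$; the vertex operator algebra axioms over $\D$ are then inherited from those of $V$ over $\C$.

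Writing a general element of $V_\D$ as $a + w$ with $a\in (M_C)_\D$ and $w\in N_\D$, closure splits into the four products $(M_C)_\D\cdot(M_C)_\D$, $(M_C)_\D\cdot N_\D$, $N_\D\cdot(M_C)_\D$ and $N_\D\cdot N_\D$. The first is the assertion that $(M_C)_\D$ is a vertex operator algebra over $\D$, from Section 7. For $(M_C)_\D\cdot N_\D\subseteq N_\D$ one uses that $\phi$ intertwines the $M_C$-actions, so $Y(a, z)\phi(v) = \phi\big(Y(a, z)v\big)$, together with the fact that the module action of $(M_C)_\D$ on $M_C(d_0, x, \lambda)$ preserves $M_C(d_0, x, \lambda)_\D$ (Lemma \ref{klem}). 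For $N_\D\cdot(M_C)_\D\subseteq N_\D$ one applies skew-symmetry $Y(w, z)a = e^{zL(-1)}Y(a, -z)w$ together with the integrality of $\tfrac{L(-1)^j}{j!}$ on $N_\D$ — which, since $\phi$ commutes with $L(-1) = \omega_0$, follows from its integrality on $V(H_\D, d_0)$, proved as in Lemma \ref{invform}(4); alternatively one transports $Y|_{V^{d_0}\times M_C}$ through $\phi$ to an intertwining operator of type $\binom{M_C(d_0,x,\lambda)}{M_C(d_0,x,\lambda)\ M_C}$, which lies in the same $1$-dimensional fusion space as the integral operator $\Y$ of Corollary \ref{c9.7} (with trivial second slot), and uses the creation property $Y(w, z)\1 = e^{zL(-1)}w = \Y(w, z)\1$ — satisfied by both — to conclude they coincide.

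The essential case is $N_\D\cdot N_\D\subseteq (M_C)_\D$, the closure of the product $V^{d_0}\times V^{d_0}\to V^{d_0+d_0} = V^0 = M_C$. Transporting $Y|_{V^{d_0}\times V^{d_0}}$ through $\phi$ gives an intertwining operator of type $\binom{M_C}{M_C(d_0,x,\lambda)\ M_C(d_0,x,\lambda)}$ (here $d_0 + d_0 = 0$, $x + x = 0$, and $M_C(0,0,\cdot)$ is just $M_C$). Since $M_C(d_0,x,\lambda)$ is a self-dual simple current, $M_C(d_0,x,\lambda)\boxtimes M_C(d_0,x,\lambda) = M_C$ and this fusion space is $1$-dimensional, so the transported operator equals $c\,\Y$ with $\Y$ the integral intertwining operator of Corollary \ref{c9.7}; moreover $c\ne 0$, since $V$ being simple forces $Y|_{V^{d_0}\times V^{d_0}}\ne 0$ while $\Y\ne 0$ (Lemma \ref{dl} rules out degeneracy). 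Rescaling $\phi$ by a scalar $t$ multiplies this transported product operator by $t^2$ while leaving the other three products unaffected (they involve $\phi$ and $\phi^{-1}$ in a balanced way), so choosing $t$ with $t^2 = c^{-1}$ turns the transported product into $\Y$ itself, and then $N_\D\cdot N_\D\subseteq (M_C)_\D$ by Corollary \ref{c9.7}. All four closures then hold and $V_\D$ is the required $\D$-form. The main obstacle is precisely this last case: matching the ambient vertex operator $Y$ on $V^{d_0}\times V^{d_0}$ with the abstractly constructed \emph{integral} intertwining operator $\Y$ up to a scalar, and verifying that the single complex rescaling of $\phi$ used to absorb that scalar does not disturb the module and skew-symmetry closures.
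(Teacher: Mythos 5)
Your proposal is correct and follows essentially the same route as the paper: identify $V^{d_0}$ with the self-dual simple current $M_C(d_0,x,\lambda)$, observe that the ambient product agrees with the integral intertwining operator $\mathcal{Y}$ of Corollary \ref{c9.7} on the mixed products, use the one-dimensionality of the fusion space to write the product $V^{d_0}\times V^{d_0}\to M_C$ as a nonzero multiple $c\,\mathcal{Y}$, and absorb $c$ by rescaling the twisted piece of the $\D$-form by a square root (the paper's $V_\D=(M_C)_\D\oplus a^{-1/2}M_C(d,x,\lambda)_\D$). Your extra care in making the isomorphism $\phi$ explicit, checking $c\neq 0$ via simplicity and Lemma \ref{dl}, and verifying that the rescaling leaves the other three closures untouched only fills in details the paper leaves implicit.
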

\begin{proof} Assume $V=M_C\oplus M_C(d,x)$ for some $d\in C^{\perp},$ $x\in\Z_2^r.$ It follows from Corollary \ref{c9.7} that
${\cal Y}(u,z)v\in (M_C)_\D\oplus M_C(d,x)_\D[[z,z^{-1}]]$ for $u,v\in (M_C)_\D\oplus M_C(d,x)_\D.$ On the surface, $(M_C)_\D\oplus M_C(d,x)_\D$ is a perfect candidate for the $\D$-form of $V.$ 
The problem 
is that we do not know the $Y$ which defines the vertex operator algebra structure on $V$ is equal to ${\cal Y}.$ From the definition of ${\cal Y}$ and the skew symmetry we know that $Y(u,z)v={\cal Y}(u,z)v$ for 
$u\in M_C,$ or $u\in M_C(d,x)$ and $v\in M_C.$ Now we assume that $u,v\in M_C(d,x).$ Since $M_C(d,x)$ is a simple current, there exists a nonzero complex number $a$ such that $Y(u,z)v=a{\cal Y}(u,z)v$ for any
$u,v.$ We now set $V_\D=(M_C)_\D\oplus a^{-1/2}M_C(d,x)_\D.$ It is clear that $V_\D$ is a free module over $\D.$ To prove that
$V_\D$ is a $\D$-form, it is good enough to show that $Y(u,z)v\in V_\D[[z,z^{-1}]]$ for $u,v\in V_\D.$ From the discussion before, if $u\in (M_C)_\D,$ or $u\in M_C(d,x)_\D$ and $v\in (M_C)_\D$ this is clear.  Now we let $u=a^{-1/2}u^1, v=a^{-1/2}v^1\in a^{-1/2}M_C(d,x)_\D.$
Then $Y(u,z)v={\cal Y}(u^1,z)v^1\in V_\D.$ 
\end{proof} 

We certainly believe that the constant $a$ in the proof of Lemma \ref{form1} is $1.$ But we cannot prove it in this paper. 

We now can deal with the general case. 

\begin{thm} Any framed vertex operator algebra $V$ over $\C$ has a $\D$-form $V_\D.$
Moreover, 
for any algebraically closed field $\F$ whose characteristic is different from $2,7,$ $V_\F=\F\otimes_\D V_\D$ is a framed vertex operator algebra over $\F.$
\end{thm}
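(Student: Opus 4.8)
The plan is to bootstrap from Lemma~\ref{fform1} (the rank-one case $D\cong\Z_2$) to an arbitrary $D$ by induction on $|D|$, using the intertwining operators $\Y$ from Corollary~\ref{c9.7} to control where products land. Write $V=\bigoplus_{d\in D}V^d$ with $V^0=M_C$ and each $V^d\cong M_C(d,x(d),\lambda(d))$ a self-dual simple current (here $D\subset C^\perp$ by Theorem~\ref{t4.5} and the hypotheses of Corollary~\ref{c9.7} are met). For each $d$ I would fix the $\D$-form $M_C(d,x(d),\lambda(d))_\D$ constructed in Lemma~\ref{klem}, and for each ordered pair $(d,e)$ let $\Y_{d,e}$ be the canonical intertwining operator of type $\binom{M_C(d+e,x(d)+x(e),\lambda(d+e))}{M_C(d,x(d),\lambda(d))\ M_C(e,x(e),\lambda(e))}$ of Corollary~\ref{c9.7}, which has the crucial integrality property $u_n v\in M_C(d+e,\dots)_\D$ whenever $u,v$ lie in the respective $\D$-forms. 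Since each $V^d$ is a simple current and the fusion space $I\binom{V^{d+e}}{V^d\,V^e}$ is one-dimensional, there are nonzero scalars $a_{d,e}\in\C^\times$ with
$$Y(u,z)v=a_{d,e}\,\Y_{d,e}(u,z)v\qquad(u\in V^d,\ v\in V^e).$$

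The main task is then to find rescalings: nonzero scalars $b_d\in\C^\times$ (with $b_0=1$) such that the $\D$-submodule $V_\D:=\bigoplus_{d\in D}b_d\,M_C(d,x(d),\lambda(d))_\D$ is closed under all $u_n$, i.e. $Y$ restricted to $V_\D$ lands in $V_\D[[z,z^{-1}]]$. Feeding $u=b_d u^1$, $v=b_e v^1$ into the displayed relation, closure on components amounts to
$$b_d\,b_e\,a_{d,e}\,b_{d+e}^{-1}\in\D^\times=\{\pm 2^n:n\in\Z\}\qquad\text{for all }d,e\in D,$$
because $\Y_{d,e}(u^1,z)v^1\in M_C(d+e,\dots)_\D[[z,z^{-1}]]$. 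So I must solve, for the function $d\mapsto b_d$, the $2$-cocycle-type equation that makes $b_d b_e a_{d,e}/b_{d+e}$ lie in the unit group $\D^\times$. Associativity of $Y$ forces the $a_{d,e}$ to satisfy the $2$-cocycle condition $a_{d,e}a_{d+e,f}=a_{e,f}a_{d,e+f}$ up to the usual simple-current associativity signs (which themselves are $\pm1\in\D^\times$), so $[a]$ is a class in $H^2(D,\C^\times/\D^\times)$; since $D$ is an elementary abelian $2$-group and $\C^\times/\D^\times$ is uniquely $2$-divisible (every element has square roots, e.g. $a^{1/2}$), that cohomology group vanishes, and a system $(b_d)$ exists. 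Concretely I would build the $b_d$ inductively over a filtration of $D$ by subgroups $\{0\}=D_0\subset D_1\subset\cdots\subset D_k=D$ with $[D_{i+1}:D_i]=2$: at each stage pick a new generator $g$, set $b_g=a_{?}^{-1/2}$ exactly as in the proof of Lemma~\ref{fform1}, and extend multiplicatively, checking the displayed membership relation only on pairs of coset representatives (the $M_C$-side pairings being automatically in $\D$ by Corollary~\ref{c9.7}). Having fixed such $(b_d)$, the resulting $V_\D$ is a free $\D$-module with $\rank (V_\D)_n=\dim V_n$ (each summand $b_d M_C(d,\dots)_\D$ is a $\D$-form of $V^d$ by Lemma~\ref{klem}), contains $\1$ and $\omega$ (these live in $(M_C)_\D$), and is closed under all vertex operators; hence it is a vertex operator algebra $\D$-form of $V$ in the sense of the Definition.

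For the second assertion, set $V_\F=\F\otimes_\D V_\D$. Base change of a vertex operator algebra over $\D$ along $\D\to\F$ is again a vertex operator algebra, and the $48$ (or $r$) conformal vectors $\omega_i\in (M_C)_\D$ reduce to conformal vectors in $V_\F$ each generating a copy of $L(\tfrac12,0)_\F$ (using Theorem~\ref{harational} and the fact that the $\D$-form $(M_C)_\D$ restricts on each $\<\omega_i\>$ to the standard integral form of $L(\tfrac12,0)$, cf.\ Section~3), mutually commuting with $\omega=\sum_i\omega_i$; thus $\{\omega_1,\dots,\omega_r\}$ is a Virasoro frame in $V_\F$. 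Simplicity of $V_\F$ follows because $V_\D$ carries the positive-definite $\D$-valued invariant form assembled from Proposition~\ref{bilinear} and Lemma~\ref{invform} on the pieces $M_C(d,\dots)_\D$ (a vertex operator algebra over a field with a nondegenerate invariant form and one-dimensional degree-zero part is simple). Hence $V_\F$ is a framed vertex operator algebra over $\F$.

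\textbf{Main obstacle.} The one genuinely new point is the existence of the rescaling scalars $(b_d)$ — equivalently, showing the class of $(a_{d,e})$ in $H^2(D,\C^\times/\D^\times)$ is trivial. In Lemma~\ref{fform1} this was a single square-root extraction; for general $D$ one must handle compatibility of these choices across all pairs, i.e.\ the cocycle/coboundary bookkeeping. I expect this to go through cleanly because $D$ is elementary abelian $2$-group and $\C^\times/\D^\times$ is $2$-divisible, but it is where the argument must be spelled out carefully (and, as the authors note, one cannot hope to pin down the $b_d$ to be $1$).
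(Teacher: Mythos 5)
Your reduction of the problem to finding scalars $b_d$ with $b_db_ea_{d,e}b_{d+e}^{-1}\in\D^{\times}$ is a sensible reformulation, but the step you rely on to solve it is wrong as stated, and it is exactly the step the paper is organized to avoid. First, $\C^{\times}/\D^{\times}$ is \emph{not} uniquely $2$-divisible: the classes of $i$ and of $\sqrt{2}$ are nontrivial yet square into $\D^{\times}=\{\pm 2^{n}\}$, so the coefficient group has $2$-torsion. Second, divisibility of the coefficients does not force $H^{2}$ of an elementary abelian $2$-group to vanish: $H^{2}((\Z_2)^{k},\C^{\times})\cong(\Z_2)^{k(k-1)/2}$ (the Schur multiplier) is nonzero for $k\geq 2$, so the general vanishing you invoke cannot be had for free for $\C^{\times}/\D^{\times}$ either. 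Third, before any cohomological statement even makes sense you must check that the constants $a_{d,e}$ satisfy the $2$-cocycle identity modulo units, which amounts to an associativity comparison of iterated compositions of the intertwining operators of Corollary \ref{c9.7} on the twisted modules $V(H,d)$; you have not carried this out, and the paper never needs it. So your inductive step (``extend multiplicatively and check only on coset representatives'') is precisely the unproved coherence statement: what would have to be shown is triviality of the \emph{specific} class of $(a_{d,e})$, not vanishing of the whole group, and no argument for that is given.

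The paper's own proof takes a different route that sidesteps the cocycle problem entirely. It fixes a minimal generating set $\{d^1,\dots,d^k\}$ of $D$, applies the rank-one Lemma \ref{fform1} only to the generators to get $V^{d^j}_\D=a_jM_C(d^j,x(d^j),\lambda(d^j))_\D$, and then for $d=d^{i_1}+\cdots+d^{i_s}$ it makes no scalar choice at all: it \emph{defines} $V^{d}_\D$ as the iterated product $V^{d^{i_1}}_\D\cdots V^{d^{i_s}}_\D$ (spans of successive modes). Since on each pair of summands $Y$ equals a single nonzero scalar times the integral operator $\Y$ of Corollary \ref{c9.7}, each $V^{d}_\D$ sits inside $\alpha M_C(d,x(d),\lambda(d))_\D$ and is therefore free; and the closure $V^{d}_\D\cdot V^{e}_\D\subset V^{d+e}_\D$ follows from the integer-coefficient commutativity and associativity identities for modes, which reduce everything to the two rank-one inclusions $V^{0}_\D\cdot V^{d^j}_\D\subset V^{d^j}_\D$ and $V^{d^j}_\D\cdot V^{d^j}_\D\subset V^{0}_\D$ supplied by Lemma \ref{fform1}. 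If you wish to keep the rescaling picture you must prove triviality of the specific cocycle; otherwise the product construction is the way to go. Finally, note that your justification of the ``moreover'' part also leans on unavailable facts: self-duality and positive definiteness of the form are established (Lemma \ref{form1}, Lemma \ref{invform}, Corollary \ref{dform}) only for the standard lattices $(M_C)_\D$ and $M_C(d,x,\lambda)_\D$, not for rescaled or product pieces, so simplicity of $V_\F$ does not follow as claimed.
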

\begin{proof}  Let $D=D(V).$ Note that  for any $d,e\in D,$ $u\in M_C(d, x(d)),$ $v\in M_C(e,x(e))$ and $n\in \Z,$ 
$$Y(u,z)v\in M_C(d+e,x(d+e))[[z,z^{-1}]].$$ 
By Lemmas \ref{l9.0} and \ref{l9.4}
$${\cal Y}(u,z)v\in V(H,d+e, x(d)+x(e)+C)[[z,z^{-1}]]$$ 
Since both  $M_C(d,x(d))$ and $M_C(e,x(e))$ are simple currents, this implies that $M_C(d+e,x(d+e))$ is an irreducible
$M_C$-submodule of $V(H,d+e, x(d)+x(e)+C)$ and $x(d)+x(e)+C=x(d+e)+C.$ So 
$V(H,d+e, x(d)+x(e)+C)=V(H,d+e,x(d+e)+C).$ 

Now let $\{d^1,\dots,d^k\}$ be  a least generating set of $D. $ Then 
$$V=\bigoplus_{1\leq i_1<\cdots< i_s\leq k}V^{\sum_{p=1}^sd^{i_p}}=\bigoplus_{ 1\leq i_1<\cdots< i_s\leq k}M_C(\sum_{p=1}^sd^{i_p},x(\sum_{p=1}^sd^{i_p})).$$
Using the commutativity and associativity of vertex operators
(see Propositions 4.5.7 and 4.5.8 of \cite{LL}) we see that 
 $M_C(\sum_{p=1}^sd^{i_p},x(\sum_{p=1}^sd^{i_p}))$ is spanned by
$u^{i_1}_{n_1}\cdots u^{i_s}_{n_s}\1$ for $u^{i_p}\in M(d^{i_p},x(d^{i_p}))$ and $n_p\in \Z.$ 

According to Lemma \ref{fform1}, for each $d^j,$ there exists a nonzero constant $a_j$ such that 
$(M_C)_\D+a_jM_C(d^j,x(d^j))_\D$ is a $\D$-form of $M_C+M_C(d^j,x(d^j)).$  For short, we set
$V^0_\D=(M_C)_\D$ and $V^{d^j}_\D=a_jM_C(d^j,x(d^j))_\D.$  For any subset $X,Y$ of $V$,  we denote 
the $\D$-span of $x_ny$ for $x\in X$ $y\in Y$ and $n\in \Z$ by $X\cdot Y.$  
For any $d\in D$, there exists $1\leq i_1<\cdots< i_s\leq k$ such that $d=d^{i_1}+\cdots +d^{i_s}.$ 
Set $V^{d}_\D=V^{d^{i_1}}_\D\cdots V^{d^{i_{s-1}}}_\D\cdot V^{d^{i_s}}_\D.$ 
We also set
$$V_\D=\bigoplus_{d\in D}V^d_\D.$$
Clearly, $V=\C\otimes_{\D}V_\D.$ 
We claim $V_\D$ is a $\D$-form of $V.$ 

First we prove that each $V^{d}_\D$ is a free $\D$-module.  We need a general result that  for
$d,e\in D,$ $0\ne a,b\in \C,$
the $\D$-span of $u_nv$  for $u\in a M_c(d,x(d))_\D,$ $v\in bM_C(e,x(e))_\D$ and $n\in \Z$ is contained in 
$cM_C(d+e,x(d+e))_\D$ for some $0\ne c\in \C.$ 
Since $Y(u,z)v=f{\cal Y}(u,z)v$ for some nonzero constant $f$ which depends on $d$ and $e$ only, and
${\cal Y}(u,z)v\in abM_C(d+e,x(d+e))_\D[[z,z^{-1}]]$ we see that 
$u_nv\in fab M_C(d+e,x(d+e))_\D.$ Now let $d=d^{i_1}+\cdots +d^{i_s}.$ 
Then  there exists a nonzero constant $\alpha\in\C$ such that
 $V^{d}_\D$ is a $\D$--submodule of $\alpha M_C(d,x(d))_\D.$ In particular, 
$V^{d}_\D$ is a free $\D$-module.

We finally prove that  $V_\D$ is a vertex operator algebra over $\D.$  Observe that for $u,v,w\in V,$ $m,n\in \Z,$
$$Y(u_mv,z)=\Res\{(z_1-z)^mY(u,z_1)Y(v,z)-(-z+z_1)^mY(v,z)Y(u,z_1)\},$$
$$u_mv_nw=\sum_{i}\alpha_i(u_{s_i}v)_{t_i}w,\ u_mv_nw=\sum_{j}\beta_jv_{p_j}u_{q_j}w$$
for some integers $\alpha_i,\beta_j.$  

Now let  $d=d^{i_1}+\cdots +d^{i_s},$  $e=d^{j_1}+\cdots +d^{j_t}\in D.$ 
Then $V^d_\D=V^{d^{i_1}}_\D\cdots V^{d^{i_{s-1}}}_\D\cdot V^{d^{i_s}}_\D$ and
$V^e_\D=V^{d^{j_1}}_\D\cdots V^{d^{j_{t-1}}}_\D\cdot V^{d^{j_t}}_\D.$ 
 Using the three relations above,   we see that 
$$V^d_\D\cdot V^e_\D\subset V^{d^{i_1}}_\D\cdots  V^{d^{i_s}}_\D\cdot V^{d^{j_1}}_\D\cdots V^{d^{j_{t-1}}}_\D\cdot V^{d^{j_t}}_\D.$$ 
Note that for any $j,$ 
$V^0_\D\cdot V^{d^j}_\D\subset V^{d^j}_\D$ and $V^{d^j}_\D\cdot V^{d^j}_\D\subset V^0_\D$ we immediately see that
$V^d_\D\cdot V^e_\D$ is contained in $V^{d^{l_1}}_\D\cdots V^{d^{l_{q-1}}}_\D\cdot V^{d_{l_q}}_\D$
where $d+e=d^{l_1}+\cdots+ d^{l_q}.$
This shows that  $V_\D$ is a vertex operator algebra over $\D,$ as desired. \end{proof}


\begin{thebibliography}{ABCDE}


\bibitem[AL]{AL} C. Ai and X. Lin,  On the unitary structures of vertex operator superalgebras,  {\em J. Alg.}  {\bf 487} (2017), 407-418.

\bibitem[AW]{AW}T. Arakawa and W. Wang, Arakawa,  Modular affine vertex algebras and baby Wakimoto modules,  {\em Proc. Symp. Pure.
Math. AMS} {\bf 92}  (2016), 1-15.
\bibitem[B1]{B1}R. E. Borcherds, Vertex algebras, Kac-Moody algebras, and the Monster,
{\em Proc. Natl. Acad. Sci. USA} {\bf 83} (1986), 3068-3071.

\bibitem[B2]{B2}R. E. Borcherds, Modular moonshine III, {\em Duke Math. J.} {\bf 93} (1998), 129-154.

\bibitem[BR]{BR} R. E. Borcherds and A. J. E. Ryba, Modular moonshine II, {\em Duke Math. J.} {\bf  83}
(1996), 435-459.
\bibitem[C]{C} S. Carnahan, A self-dual integral form of the Moonshine module, SIGMA Symmetry Integrability Geom. Methods Appl. {\bf 15} (2019), Paper No. 030, 36 pp.

\bibitem[D]{D} C. Dong, Representations of the moonshine module
vertex operator algebra, {\em Contemp. Math.} {\bf 175} (1994),
27-36.

\bibitem[DG1]{DG1}C. Dong and R. L. Griess, Integral forms in vertex operator algebras,  {\em J. Alg.}  {\bf 365} (2012), 184-198.

\bibitem[DG2]{DG2}C. Dong and R. L. Griess, Lattice-integrality of certain group-invariant integral forms in vertex operator algebras, {\em J. Alg.}  {\bf 474} (2017), 505–516.

\bibitem[DG3]{DG3}C. Dong and R. L. Griess, Determinants for integral forms in lattice type vertex operator algebras, {\em J. Alg.} {\bf 558} (2020), 327–335. 



\bibitem[DGH]{DGH}C. Dong, R. Griess Jr. and G. Hoehn,
Framed vertex operator algebras, codes and the moonshine module,
{\em Comm. Math. Phys.} {\bf 193} (1998), 407-448.


\bibitem[DGL]{DGL} C. Dong, R. Griess Jr. and C. Lam, Uniqueness results of the moonshine
vertex operator algebra, {\em American Journal of Math.} {\bf 129}
(2007), 583-609.


\bibitem[DJX]{DJX}  C. Dong, X. Jiao and F. Xu, Quantum Dimensions and Quantum Galois Theory,
{\em Trans. AMS.} {\bf 365} (2013), 6441-6469.

\bibitem[DL]{DL} C. Dong and J. Lepowsky, Generalized Vertex
Algebras and Relative Vertex Operators, {\em Progress in Math.} Vol. 112,
Birkh\"{a}user, Boston 1993.
\bibitem[DLM0]{DLM0} C. Dong, H. Li and G. Mason, Simple currents and extensions
of vertex operator algebras, {\em Comm. Math. Phys.}
{\bf 180} (1996), 671-707.


\bibitem[DLM1]{DLM1} C. Dong, H. Li and G. Mason,
Regularity of rational vertex operator algebras, {\em  Adv. Math.} {\bf 132} (1997), 148-166.

\bibitem[DLM2]{DLM2} C. Dong, H. Li and G. Mason,
Twisted representations of vertex operator algebras, {\em Math. Ann.}
{\bf  310} (1998), 571--600.
\bibitem[DLM3]{DLM3} C. Dong, H. Li and G. Mason, Twisted representations of vertex operator algebras and associative algebras, {\em
International Math. Research Notices,} {\bf 8} (1998), 389-397.

\bibitem[DLin]{DLin} C. Dong and X. Lin, Unitary vertex operator algebras, {\em J. Alg.} {\bf 397} (2014), 252-277.

\bibitem[DM]{DM} C. Dong and G. Mason,  On quantum Galois theory, {\em Duke Math. J.} {\bf 86} (1997), 305-321.

\bibitem[DMZ]{DMZ} C. Dong, G. Mason and Y. Zhu, Discrete series of the
Virasoro algebra and the moonshine module, {\it Proc. Symp. Pure.
Math. AMS} {\bf 56} II (1994), 295--316.

\bibitem[DN]{DN}C. Dong and K. Nagatomo,
Classification of irreducible modules for the vertex operator algebra
$M(1)^+$, {\em J. Alg.} {\bf 216} (1999), 384-404.

\bibitem[DR1]{DR1} C. Dong and L. Ren, Representations of vertex operator algebras over an arbitrary Field,  {\em J. Alg.} {\bf 403} (2014), 497-516.
\bibitem[DR2]{DR2}C. Dong and L. Ren, Vertex operator algebras associated to the Virasoro algebra over an arbitrary field, {\bf 368} {\em Trans. AMS.} (2016),
5177-5196.
\bibitem[DZ]{DZ} C. Dong and Z. Zhao, Twisted representations of vertex
operator superalgebras, {\em Comm. Contemp. Math.} {\bf 8} (2006),
101-122.

\bibitem[FFR]{FFR}
Alex J. Feingold, Igor B. Frenkel and John F. X. Ries, Spinor
Construction of Vertex Operator Algebras, Triality, and
$E_{8}^{(1)}$, {\em Contemporary Math. AMS} {\bf 121}, 1991.

\bibitem[FHL]{FHL} I. B. Frenkel, Y. Huang and J. Lepowsky, On
axiomatic approaches to vertex operator algebras and modules,
{\it Memoirs AMS} {\bf 104}, 1993.

\bibitem[FLM]{FLM} I. B. Frenkel, J. Lepowsky and A. Meurman,
Vertex Operator Algebras and the Monster, {\em Pure and Applied
Math.,} Vol. {\bf 134}, Academic Press, 1988.

\bibitem[HL1]{HL1} Y. Huang and J. Lepowsky, A theory of tensor products for module categories for
a vertex operator algebra, I, {\em Sel. Math.} {\bf 1} (1995), 699-756.

\bibitem[HL2]{HL2} Y. Huang and J. Lepowsky, A theory of tensor products for module categories for
a vertex operator algebra, III, {\em J. Pure Appl. Alg.} {\bf 100} (1995), 141-171.

\bibitem[H]{H} J. Humphreys, Introduction to Lie Algebras and Representation Theory, Springer-Verlag New York, 1972.

\bibitem[JLM]{JLM} X. Jiao, H, Li and Q. Mu, Modular Virasoro vertex algebras and affine vertex algebras, {\em J. Alg.} {\bf 519} (2019), 273-311.

\bibitem[KR]{KR} V. Kac and A. Raina, Highest Weight Representations of Infinite Dimensional Lie Algebras, Advanced Series in Mathematical Physics, Vol. 2,
World Scientific, 1987.
\bibitem[KW]{KW}
V. Kac and W. Wang, Vertex operator superalgebras and
representations, {\em Contem. Math., AMS} {\bf 175} (1994), 161-191.
\bibitem[LS]{LS} C. Lam and H. Shimakura,  Classification of holomorphic framed vertex operator algebras of central charge 24, {\em Amer. J. Math.} {\bf 137} (2015), 111-137. 
 \bibitem[LY1]{LY1} C. Lam and H. Yamauchi,  A characterization of the moonshine vertex operator algebra by means of Virasoro frames, {\em  Int. Math. Res. Not.}  IMRN 2007, no. 1
\bibitem[LY2]{LY2} C. Lam and H. Yamauchi, On the structure of framed vertex operator
algebras and their pointwise frame stabilizers,  {\em  Comm. Math. Phys.} {\bf 277} (2008),  237-285.

\bibitem[LL]{LL} J. Lepowsky and H. Li, Introduction to Vertex Operator Algebras, and Their Representations, {\em Progress in Math.} Vol. 227,
Birkh\"{a}user, Boston 2004.
\bibitem[L1]{L1} H. Li, Local systems of vertex operators, vertex superalgebras and modules, {\em J. Pure Appl. Algebra} {\bf 109} (1996), 143-195.
\bibitem[L2]{L2}
H. Li, Local systems of twisted vertex operators, vertex operator superalgebras and twisted modules, {\em  Contemp. Math. AMS.}
{\bf  193} (1996), 203-236.
\bibitem[L3]{L3} H. Li, An analogue of the Hom functor and a generalized nuclear democracy theorem, {\em 
Duke Math. J.} {\bf  93} (1998), 73-114.

\bibitem[LM1]{LM1} H. Li and Q. Mu,  Heisenberg VOAs over fields of prime characteristic and their representations, {\em Trans. Amer. Math. Soc.} {\bf  370} (2018), 1159–1184.

\bibitem[LM2]{LM2} H. Li and Q. Mu, Symmetric invariant bilinear forms on modular vertex algebras, {\em  J. Alg.} {\bf 513}  (2018), 435–465.
\bibitem[LM3]{LM3} H. Li and Q. Mu, Twisted modules for affine vertex algebras over fields of prime characteristic, {\em J. Alg.} {\bf  541} (2020), 380–414.


\bibitem[Mc]{Mc}R. McRae, Integral forms for tensor powers of the Virasoro
vertex operator algebra $L(\ha, 0)$ and their modules,  {\em J. Alg.} {\bf 431} (2015), 1-23.

\bibitem[M1]{M1}  M. Miyamoto, Binary codes and vertex operator (super)algebras,
\emph{J. Alg.} \textbf{181} (1996), 207-222.

\bibitem[M2]{M2} M. Miyamoto, Representation theory of code vertex operator
algebra, {\em  J. Alg.} {\bf  201} (1998), 115--150.

\bibitem[M3]{M3} M. Miyamoto, A new construction of the Moonshine vertex
operator algebra over the real number field, {\em Ann. of Math.}
{\bf 159} (2004), 535--596.
\bibitem[Mu]{Mu}  Q. Mu, Lattice vertex algebras over fields of prime characteristic, \emph{J. Alg.} \textbf{417} (2014), 39-51.


\bibitem[R]{R} L. Ren,  Modular $A_n(V)$ theory,   {\em J. Alg.} {\bf 485} (2017),  254-268.

\bibitem[W1]{W1} W. Wang, Rationality of Virasoro vertex operator algebras,
{\it International Math. Research Notices}, {\bf 71} (1993), 197-211.

\bibitem[W2]{W2} W. Wang, Some conjectures on modular representations of affine $sl_2$ and Virasoro algebra, {\em Sci. China Math.} {\bf 61} (2018), 385–390. 


\bibitem[X]{X} X. Xu, Intertwining operators for twisted modules of a colored vertex operator superalgebra, {\em  J. Alg.}  {\bf 175 } (1995), 241-273. 

\bibitem[Y]{Y}H. Yamauchi, Extended Griess algebras, conformal designs and Matsuo-Norton trace formulae.
in: Symmetries and groups in contemporary physics, 423-428, Nankai Ser. Pure Appl. Math.
Theoret. Phys., 11, World Sci. Publ., Hackensack, NJ, 2013.

\bibitem[Z]{Z} Y. Zhu, Modular invariance of characters of
vertex operator algebras, {\em J. Amer. Math. Soc.} {\bf 9 } (1996), 237--302.

\end{thebibliography}
\end{document}